\newcommand{\ex}[2]{{\ifx&#1& \mathbb{E} \else
\underset{#1}{\mathbb{E}} \fi \left[#2\right]}}
\newcommand{\pr}[2]{{\ifx&#1& \mathbb{P} \else
\underset{#1}{\mathbb{P}} \fi \left[#2\right]}}
\newcommand{\var}[2]{{\ifx&#1& \mathsf{Var} \else
\underset{#1}{\mathsf{Var}} \fi \left[#2\right]}}
\newcommand{\dr}[3]{\mathrm{D}_{#1}\left(#2\middle\|#3\right)}
\newcommand{\nope}[1]{}
\newtheorem{lem}{Lemma}
\newtheorem{defn}[lem]{Definition}
\newtheorem{cor}[lem]{Corollary}
\newtheorem{prop}[lem]{Proposition}
\newtheorem{thm}[lem]{Theorem}
\newcommand{\Lap}[1]{\mathsf{Lap}(#1)}
\DeclareMathOperator{\arsinh}{arsinh}
\newcommand*{\citet}[1]{\AtNextCite{\AtEachCitekey{\defcounter{maxnames}{2}}}
\textcite{#1}}
\newcommand*{\citep}[1]{\cite{#1}}
\newcommand{\LS}[2]{\mathsf{LS}_{#1}(#2)}
\newcommand{\LSk}[3]{\mathsf{LS}_{#1}^{#2}(#3)}
\newcommand{\SmS}[3]{\mathsf{S}_{#1}^{#3}(#2)}
\newcommand{\GS}[1]{\mathsf{GS}_{#1}}
\title{Average-Case Averages:\\Private Algorithms for Smooth Sensitivity\\and Mean Estimation}
\author{Mark Bun\thanks{Simons Institute for the Theory of Computing at UC Berkeley and Boston University. Supported by a Google Research Fellowship.~\dotfill~\texttt{mbun@bu.edu}} \and Thomas Steinke\thanks{IBM Research. Part of this work was done while visiting the Simons Institute for the Theory of Computing at UC Berkeley.~\dotfill~\texttt{smooth@thomas-steinke.net}}}
\date{}
\begin{document}
\maketitle

\begin{abstract}
The simplest and most widely applied method for guaranteeing differential privacy is to add instance-independent noise to a statistic of interest that is scaled to its global sensitivity. However, global sensitivity is a worst-case notion that is often too conservative for realized dataset instances. We provide methods for scaling noise in an instance-dependent way and demonstrate that they provide greater accuracy under average-case distributional assumptions.

Specifically, we consider the basic problem of privately estimating the mean of a real distribution from i.i.d.~samples. The standard empirical mean estimator can have arbitrarily-high global sensitivity. We propose the trimmed mean estimator, which interpolates between the mean and the median, as a way of attaining much lower sensitivity on average while losing very little in terms of statistical accuracy.

To privately estimate the trimmed mean, we revisit the smooth sensitivity framework of Nissim, Raskhodnikova, and Smith (STOC 2007), which provides a framework for using instance-dependent sensitivity. We propose three new additive noise distributions which provide concentrated differential privacy when scaled to smooth sensitivity. We provide theoretical and experimental evidence showing that our noise distributions compare favorably to others in the literature, in particular, when applied to the mean estimation problem.
\end{abstract}

\section{Introduction}

Consider a sensitive dataset $x \in \mathcal{X}^n$ consisting of the records of $n$ individuals and a real-valued function $f : \mathcal{X}^n \to \mathbb{R}$. Our goal is to estimate $f(x)$ while protecting the privacy of the individuals whose data is being used. \emph{Differential privacy}~\cite{DworkMNS06} gives a formal standard of individual privacy for this problem (and many others), requiring that for all pairs of datasets $x, y \in \mathcal{X}^n$ differing in one record (called \emph{neighbouring} datasets and denoted $d(x,y)\le1$), the distribution of outputs should be similar for both inputs $x$ and $y$.

The most basic technique in differential privacy is to release an answer $f(x) + \nu$, where $\nu$ is  instance-independent additive noise (e.g.,~Laplace or Gaussian) with standard deviation proportional to the \emph{global sensitivity} $\GS{f}$ of the function $f$. Here, $\GS{f} = \max_{y,z\in\mathcal{X}^n~:~d(y,z)\le 1} |f(y) - f(z)|$ measures the maximum amount that $f$ can change across all pairs of datasets differing on one entry, including those which have nothing to do with $x$. 

Calibrating noise to global sensitivity is optimal in the worst case, but may be overly pessimistic in the average case. This is the case in many statistical settings where $x$ consists of i.i.d.~samples from a reasonably structured distribution and the goal is to estimate a summary statistic of that distribution. The main example we consider in this work is that of estimating the mean of a distribution given i.i.d.~samples from that distribution. The standard estimator for the distribution mean is the sample mean. However, for distributions with unbounded support, e.g., Gaussians, the global sensitivity of the sample mean is infinite. Thus we consider a different estimator and a different measure of its sensitivity.

A more fine-grained notion of sensitivity is the \emph{local sensitivity} of $f$ at the dataset $x$, which measures the variability of $f$ in the neighbourhood of $x$. That is, $$\LS{f}{x} = \max_{y\in\mathcal{X}^n~:~d(x,y)\le1} |f(y) - f(x)|.$$  However, na\"{i}vely calibrating noise to $\LS{f}{x}$ is not sufficient to guarantee differential privacy. The reason is that the local sensitivity may itself be highly variable between neighbouring datasets, and hence the magnitude of the noise observed in a statistical release may leak information about the underlying dataset $x$.

The work of Nissim, Raskhodnikova, and Smith~\cite{NissimRS07} addressed this issue by identifying \emph{smooth sensitivity}, an intermediate notion between local and global sensitivity, with respect to which one can calibrate additive noise while guaranteeing differential privacy. Smooth sensitivity is a pointwise upper bound on local sensitivity which is itself ``smooth'' in that its multiplicative variation on neighboring datasets is small. More precisely, for a smoothing parameter $t > 0$, the \emph{$t$-smoothed sensitivity} of a function $f$ at a dataset $x$ is defined as $$\SmS{f}{x}{t} = \max_{y \in \mathcal{X}^n} e^{-t \cdot d(x, y)} \cdot \LS{f}{y},$$ where $d(x, y)$ denotes the number of entries in which $x$ and $y$ disagree. Noise distributions which simultaneously do not change much under additive shifts and multiplicative dilations at scale $t$ can be used with smooth sensitivity to give differential privacy.

In this work, we extend the smooth sensitivity framework by identifying three new distributions from which additive noise scaled to smooth sensitivity provides \emph{concentrated differential privacy}. We apply these techniques to the problem of mean estimation, for which we propose the \emph{trimmed mean} as an estimator that has both high accuracy and low smooth sensitivity.

\subsection{Background}\label{sec:background}
Before describing our results in more detail, we recall the definition of differential privacy.
\begin{defn}[Differential Privacy (DP) \cite{DworkMNS06,DworkKMMN06}]
A randomized algorithm $M : \mathcal{X}^n \to \mathcal{Y}$ is $(\varepsilon,\delta)$-differentially private ($(\varepsilon,\delta)$-DP) if, for all neighboring datasets $x,y\in\mathcal{X}^n$ and all (measurable) sets $S \subset \mathcal{Y}$, $\pr{}{M(x) \in S} \le e^\varepsilon \pr{}{M(y)\in S} + \delta.$
\end{defn}
We refer to $(\varepsilon,0)$-differential privacy as pure differential privacy (or pointwise differential privacy) and $(\varepsilon,\delta)$-differential privacy with $\delta>0$ as approximate differential privacy.

Given an estimator of interest $f : \mathcal{X}^n \to \mathbb{R}$ and a private dataset $x\in\mathcal{X}^n$, the randomized algorithm given by $M(x)=f(x) + \GS{f} \cdot Z$ is $(\varepsilon,0)$-differentially private for $Z$ sampled from a Laplace distribution scaled to have mean $0$ and variance $2/\varepsilon^2$. We will use the smooth sensitivity in place of the global sensitivity. That is, we analyse algorithms of the form $$M(x) = f(x) + \SmS{f}{x}{t} \cdot Z$$ for $Z$ sampled from an admissible noise distribution.

The original work of Nissim, Raskhodnikova, and Smith~\cite{NissimRS07} proposed three admissible noise distributions. The first such distribution, the Cauchy distribution with density $\propto \frac{1}{1+z^2}$ (and its generalizations of the form $\frac{1}{1+|z|^\gamma}$ for a constant $\gamma > 1$) can be used with smooth sensitivity to guarantee pure $(\varepsilon, 0)$-differential privacy. These distributions have polynomially decaying tails and finitely many moments, which means they may be appropriately concentrated around zero to guarantee accuracy for a single statistic, but can easily result in inaccurate answers when used to evaluate many statistics on the same dataset. Unfortunately, inverse polynomial decay is in fact essential to obtain pure differential privacy. The exponentially decaying Laplace and Gaussian distributions, which appear much more frequently in the differential privacy literature,  were shown to yield approximate $(\varepsilon, \delta)$-differential privacy with $\delta > 0$.

A recent line of work~\cite{DworkR16, BunS16, Mironov17, BunDRS18} has developed variants of differential privacy which permit tighter analyses of privacy loss over multiple releases of statistics as compared to both pure and approximate differential privacy. In particular, the notion of concentrated differential privacy (CDP)~\cite{DworkR16, BunS16} has a simple and tight composition theorem for analyzing how privacy degrades over many releases while accommodating most of the key algorithms in the differential privacy literature, including addition of Gaussian noise calibrated to global sensitivity.

\begin{defn}[Concentrated Differential Privacy (CDP) \cite{DworkR16,BunS16}]
A randomized algorithm $M : \mathcal{X}^n \to \mathcal{Y}$ is $\frac12\varepsilon^2$-concentrated differentially private ($\frac12\varepsilon^2$-CDP) if, for all neighboring datasets $x,y\in\mathcal{X}^n$ and all $\alpha \in (1,\infty)$, $\dr{\alpha}{M(x)}{M(y)} \le \frac12\varepsilon^2 \alpha$, where $\dr{\alpha}{P}{Q} = \frac{1}{\alpha-1}\log\ex{X \leftarrow P}{\left({P(X)}/{Q(X)}\right)^{\alpha-1}}$ denotes the R\'enyi divergence of order $\alpha$.
\end{defn}

It is natural to ask whether concentrated differential privacy admits  distributions that can be scaled to smooth sensitivity while offering better privacy-accuracy tradeoffs than Cauchy, Laplace, or Gaussian.

\subsection{Our Contributions: Smooth Sensitivity and CDP}

As CDP is a relaxation of pure differential privacy, Cauchy noise and its generalizations automatically guarantee CDP. However, admissible distributions for CDP could have much lighter tails.
In principle, admissible noise distributions for CDP could have quasi-polynomial tails (cf.~Proposition~\ref{prop:lb-tails}), whereas pure DP tails must be polynomial. Nevertheless, it is not even clear what distribution to conjecture would have these properties. In this work, we identify three such distributions with quasi-polynomial tails and show that they provide CDP when scaled to smooth sensitivity.

\paragraph{Laplace Log-Normal (\S\ref{sec:lln}):} The first such distribution we identify, and term the ``Laplace log-normal'' $\mathsf{LLN}(\sigma)$, is the distribution of the random variable $Z = X \cdot e^{\sigma Y}$ where $X$ is a standard Laplace, $Y$ is a standard Gaussian, and $\sigma > 0$ is a shape parameter. This distribution has mean zero, variance $2e^{2\sigma^2}$, and satisfies the quasi-polynomial tail bound $\pr{}{|Z| > z} \le e^{-\log^2(z)/3\sigma^2}$ for large $z$. The following result shows that scaling Laplace log-normal noise to smooth sensitivity gives concentrated differential privacy. (See also Theorem~\ref{thm:SS-DP-LLN}.)

\begin{prop}
Let $f : \mathcal{X}^n \to \mathbb{R}$ and let $Z \leftarrow \mathsf{LLN}(\sigma)$ for some $\sigma>0$. Then, for all $s, t > 0$, the randomized algorithm $M : \mathcal{X}^n \to \mathbb{R}$ given by $M(x) = f(x) + \frac{1}{s} \cdot \SmS{f}{x}{t} \cdot Z$ guarantees $\frac{1}{2}\varepsilon^2$-CDP for $\varepsilon = t/\sigma + e^{3\sigma^2/2}s$.
\end{prop}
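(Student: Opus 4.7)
The plan is to bound $\dr{\alpha}{M(x)}{M(y)}$ uniformly in $\alpha > 1$ and show it does not exceed $\tfrac{1}{2}\varepsilon^2 \alpha$. Let $S_x := \SmS{f}{x}{t}$ and $S_y := \SmS{f}{y}{t}$. The smoothness property of $\SmS{f}{\cdot}{t}$ gives $e^{-t} \le S_y/S_x \le e^t$, and the fact that local sensitivity is pointwise bounded by smooth sensitivity yields $|f(x) - f(y)| \le \min(S_x, S_y)$. I interpose the hybrid distribution $Q$ equal to the law of $f(x) + (S_y/s)\cdot Z$ with $Z \sim \mathsf{LLN}(\sigma)$, so that the step from $P := M(x)$ to $Q$ is a pure dilation of the noise (with center $f(x)$ unchanged) and the step from $Q$ to $R := M(y)$ is a pure additive translation (with noise scale $S_y/s$ unchanged). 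These two contributions are combined via the triangle-like inequality for zCDP: if $\dr{\alpha}{P}{Q} \le \rho_1 \alpha$ and $\dr{\alpha}{Q}{R} \le \rho_2 \alpha$ for every $\alpha > 1$, then $\dr{\alpha}{P}{R} \le (\sqrt{\rho_1} + \sqrt{\rho_2})^2 \alpha$; setting $\rho = \tfrac{1}{2}\varepsilon^2$, this lets the $\varepsilon$'s add linearly.

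For the dilation step, observe that if $Z = X \cdot e^{\sigma Y}$ with $X \sim \Lap{1}$ and $Y \sim \mathcal{N}(0, 1)$, then for any $\tau$ we have $e^\tau Z = X \cdot e^{\sigma(Y + \tau/\sigma)}$, so dilating $Z$ by $e^\tau$ is equivalent to translating the internal standard Gaussian $Y$ by $\tau/\sigma$. The data-processing inequality applied to the deterministic map $(x, y) \mapsto x \cdot e^{\sigma y}$ then gives $\dr{\alpha}{Z}{e^\tau Z} \le \dr{\alpha}{\mathcal{N}(0, 1)}{\mathcal{N}(\tau/\sigma, 1)} = \alpha \tau^2/(2\sigma^2)$. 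Choosing $\tau := \log(S_y/S_x)$, for which $|\tau| \le t$, yields $\dr{\alpha}{P}{Q} \le \alpha(t/\sigma)^2/2$ and hence $\sqrt{2\rho_1} \le t/\sigma$.

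For the translation step, an affine change of variables reduces the problem to the key technical lemma $\dr{\alpha}{\mathsf{LLN}(\sigma)}{\mathsf{LLN}(\sigma) + \Delta} \le \tfrac{1}{2} e^{3\sigma^2} \Delta^2 \alpha$, applied with $|\Delta| \le s$. The natural route is to first show that the log-density $\log h$ of $\mathsf{LLN}(\sigma)$ is globally $L$-Lipschitz with $L = e^{3\sigma^2/2}$: writing $W := e^{-\sigma Y}$ and differentiating under the integral gives $|h'(z)/h(z)| = \ex{}{W^2 e^{-|z| W}}/\ex{}{W e^{-|z| W}}$, which is the mean of $W$ under an exponential tilt proportional to $e^{-|z| W}$; the tilt down-weights large $W$ as $|z|$ grows, so this expectation is maximized at $z = 0$, where it equals $\ex{}{W^2}/\ex{}{W} = e^{2\sigma^2}/e^{\sigma^2/2} = e^{3\sigma^2/2}$. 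Consequently the privacy-loss random variable $g(Z) := \log(h(Z)/h(Z - \Delta))$ lies pointwise in $[-L|\Delta|, L|\Delta|]$, and a Hoeffding-style MGF estimate gives $\dr{\alpha}{h}{h(\cdot - \Delta)} \le \dr{1}{h}{h(\cdot - \Delta)} + \tfrac{1}{2}(\alpha - 1) L^2 \Delta^2$. The residual KL term can be controlled by $\tfrac{1}{2} L^2 \Delta^2$ using the symmetry $h(z) = h(-z)$ together with the Fisher-information bound $I(h) \le L^2$ that is immediate from Lipschitz $\log h$.

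Combining the two bounds via the triangle inequality then gives $\sqrt{2 \rho_{\text{total}}} \le t/\sigma + e^{3\sigma^2/2} s = \varepsilon$, establishing $\tfrac{1}{2}\varepsilon^2$-CDP. The principal obstacle is the translation lemma: the natural joint $(X, Y)$-coupling that succeeds for the dilation step fails here because the conditional Laplace scale $e^{\sigma Y}$ is arbitrarily small on the negative tail of $Y$, forcing the pointwise Rényi ratio to blow up double-exponentially there; one is therefore obliged to argue directly at the level of the marginal density $h$, and all the technical content sits in combining the Lipschitz bound on $\log h$ with the sub-Gaussian concentration of the privacy loss around its mean.
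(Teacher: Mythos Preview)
Your approach is essentially the same as the paper's: decompose into a pure dilation step (handled via the Gaussian component, exactly as in Lemma~\ref{lem:lln_mult}) and a pure translation step (handled by bounding the Lipschitz constant of the log-density, as in Lemma~\ref{lem:lln_add}), then combine via the CDP triangle inequality (Lemma~\ref{lem:cdptriangle}). Your tilting argument for the Lipschitz constant---writing $|h'(z)/h(z)|$ as $\ex{}{W^2 e^{-|z|W}}/\ex{}{W e^{-|z|W}}$ and observing it is monotone in $|z|$---is cleaner than the paper's explicit Gaussian-integral computation, and gives the same value $L=e^{3\sigma^2/2}$.

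The one soft spot is your justification of the KL bound $\dr{1}{h}{h(\cdot-\Delta)}\le\tfrac12 L^2\Delta^2$. The Fisher-information inequality $I(h)\le L^2$ only controls the second derivative of the KL at $\Delta=0$, not globally, so it does not by itself yield the bound for all $\Delta$. The paper avoids this entirely by invoking Proposition~\ref{prop:pure-cdp}, which converts the two-sided pointwise bound $|g|\le L|\Delta|$ directly into $\dr{\alpha}\le\tfrac12 L^2\Delta^2\alpha$ for every $\alpha$. Within your own framework the fix is immediate: apply your Hoeffding bound at $\lambda=-1$ and use $\ex{P}{e^{-g}}=\int Q=1$ to obtain $\dr{1}{P}{Q}\le\tfrac12 L^2\Delta^2$ without any appeal to Fisher information.
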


\paragraph{Uniform Log-Normal (\S\ref{sec:uln}):} The second distribution we consider, the ``uniform log-normal'' distribution $\mathsf{ULN}(\sigma)$, is the distribution of $Z = U \cdot e^{\sigma Y}$ where $U$ is uniformly distributed over $[-1, 1]$, $Y$ is a standard Gaussian, and $\sigma > 0$ is a shape parameter. It has mean zero and variance $\frac{1}{3}e^{2\sigma^2}$, and also has the tail bound $\pr{}{|Z| > z} \le \pr{}{|Y| > \log(z)/\sigma} \le e^{-\log^2(z)/2\sigma^2}$ for all  $z\ge 1$. We show that it too gives CDP with smooth sensitivity. (See Theorem~\ref{thm:SS-DP-ULN}.)

\begin{prop}
Let $f : \mathcal{X}^n \to \mathbb{R}$ and let $Z \leftarrow \mathsf{ULN}(\sigma)$ with $\sigma \ge \sqrt{2}$. Then, for all $s, t > 0$, the algorithm $M(x) = f(x) + \frac{1}{s} \cdot \SmS{f}{x}{t} \cdot Z$ guarantees $\frac{1}{2}\varepsilon^2$-CDP for $\varepsilon = t/\sigma + e^{3\sigma^2/2} \cdot \sqrt{2/\pi\sigma^2}\cdot s$.
\end{prop}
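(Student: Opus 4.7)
The plan is to bound the R\'enyi divergence $\dr{\alpha}{M(x)}{M(y)}\le\tfrac12\varepsilon^{2}\alpha$ for all neighbouring $x,y$ and all $\alpha>1$, then invoke the definition of CDP.

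Setting $c:=\SmS{f}{y}{t}/\SmS{f}{x}{t}$ and $b:=s\cdot(f(y)-f(x))/\SmS{f}{x}{t}$, the translation and positive-scaling invariance of R\'enyi divergence gives
\[
\dr{\alpha}{M(x)}{M(y)} \;=\; \dr{\alpha}{Z}{b+cZ}.
\]
From the definition of smooth sensitivity $c\in[e^{-t},e^{t}]$; and since $|f(x)-f(y)|\le\LS{f}{x}\le\SmS{f}{x}{t}$, $|b|\le s$. The task is therefore to bound $\dr{\alpha}{Z}{b+cZ}$ uniformly over this range of $(b,c)$.

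Following the structure of the $\mathsf{LLN}$ analysis in Theorem~\ref{thm:SS-DP-LLN}, I would split into a multiplicative \emph{scale} contribution and an additive \emph{shift} contribution. For the scale part, exploit the latent representation $Z=Ue^{\sigma Y}$ with $Y\sim N(0,1)$, $U\sim\mathsf{Unif}[-1,1]$: then $cZ\stackrel{d}{=}Ue^{\sigma(Y+\tau)}$ for $\tau=(\log c)/\sigma$. Applying the data-processing inequality to the channel $y\mapsto Ue^{\sigma y}$ bounds this contribution by
\[
\dr{\alpha}{N(0,1)}{N(\tau,1)} \;=\; \tfrac12\alpha\tau^{2} \;\le\; \tfrac12\alpha(t/\sigma)^{2},
\]
which accounts for the $\varepsilon_{1}=t/\sigma$ term.

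The principal obstacle is the shift contribution: showing $\dr{\alpha}{Z}{b+Z}\le\tfrac12\alpha\varepsilon_{2}^{2}$ with $\varepsilon_{2}=e^{3\sigma^{2}/2}\sqrt{2/(\pi\sigma^{2})}\,s$. Unlike the $\mathsf{LLN}$ case, one cannot simply condition on $Y$, since the two conditional laws $\mathsf{Unif}[-e^{\sigma Y},e^{\sigma Y}]$ and $\mathsf{Unif}[b-e^{\sigma Y},b+e^{\sigma Y}]$ have infinite R\'enyi divergence whenever $b\neq0$. Instead, I would integrate out $U$ first to obtain the closed-form density
\[
p_{Z}(z)\;=\;\tfrac12\,e^{\sigma^{2}/2}\,\bar\Phi\!\bigl(\log|z|/\sigma+\sigma\bigr),
\]
bound the log-density derivative $|(\log p_{Z})'(z)|$ using Mill's ratio estimates on $\bar\Phi$, and then control $\log(p_{Z}(z)/p_{Z}(z-b))$ via a mean-value argument followed by a Gaussian moment computation under $p_{Z}$. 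The factor $\sqrt{2/(\pi\sigma^{2})}=2\phi(0)/\sigma$ should trace back to the supremum of the latent Gaussian density, the $e^{3\sigma^{2}/2}$ factor to a $\sigma$-weighted Gaussian moment, and the hypothesis $\sigma\ge\sqrt{2}$ should enter in order to keep the Mill's ratio estimates uniformly clean on the full real line. Finally, combine the scale and shift contributions through an intermediate distribution (e.g.\ $b+Z$) using the triangle-like subadditivity available for R\'enyi-based CDP parameters, yielding $\varepsilon=\varepsilon_{1}+\varepsilon_{2}$ as claimed.
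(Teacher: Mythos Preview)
Your outline is correct and follows the paper's proof of Theorem~\ref{thm:SS-DP-ULN} closely: split into scale and shift contributions combined via Lemma~\ref{lem:cdptriangle}, handle the scale by data-processing on the latent Gaussian (Lemma~\ref{lem:uln_mult}), and handle the shift by writing down the density $p_Z$ and bounding $|(\log p_Z)'|$ (Lemma~\ref{lem:uln_add}). One minor clarification: for the shift the paper establishes the \emph{uniform} bound $|(\log p_Z)'(z)|\le e^{3\sigma^{2}/2}\sqrt{2/(\pi\sigma^{2})}$ for all $z$ (the hypothesis $\sigma\ge\sqrt{2}$ is invoked only in the regime $z>e^{-\sigma^{2}}$, where a lower bound on the Gaussian tail integral $\int_{g(z)}^\infty e^{-y^2/2}\,\mathrm{d}y$ is needed), which immediately gives $\dr{\infty}{Z}{Z+b}\le|b|\,e^{3\sigma^{2}/2}\sqrt{2/(\pi\sigma^{2})}$ and is then converted to the CDP bound via Proposition~\ref{prop:pure-cdp}---no separate ``Gaussian moment computation under $p_Z$'' is needed.
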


\paragraph{Arsinh-Normal (\S\ref{sec:arsinhn}):} Our final new distribution is the ``arsinh-normal'' which is the distribution of $Z = \frac{1}{\sigma}\sinh(\sigma Y)$ where $Y$ is a standard Gaussian and $\sinh(y)=(e^y - e^{-y})/2$ denotes the hyperbolic sine function. This distribution has mean zero and variance $\frac{e^{2\sigma^2}-1}{2\sigma^2}$. Theorem~\ref{thm:SS-DP-ASN} shows that it gives CDP, albeit with a worse dependence on the smoothing parameter ($t$) than the Laplace log-normal and uniform log-normal distributions.

\begin{prop}
Let $f : \mathcal{X}^n \to \mathbb{R}$ and let $Z 
= \sinh(Y)$ where $Y$ is a standard Gaussian. Then, for all $s, t \in (0,1)$, the algorithm $M(x) = f(x) + \frac{1}{s} \cdot \SmS{f}{x}{t} \cdot Z$ guarantees $\frac{1}{2}\varepsilon^2$-CDP for $\varepsilon = 2(\sqrt{t}+s)$.
\end{prop}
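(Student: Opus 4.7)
The plan is to reduce the R\'enyi divergence between $M(x)$ and $M(y)$ to a comparison of standard Gaussians via the bijection $z \mapsto \arsinh(z)$, and then bound the resulting near-identity perturbation. By translation and scale invariance of R\'enyi divergence, together with the standard smoothness property $\SmS{f}{y}{t} \in [e^{-t}, e^t] \cdot \SmS{f}{x}{t}$ and the bound $|f(y) - f(x)| \le \min(\SmS{f}{x}{t}, \SmS{f}{y}{t})$ for neighbouring $x, y$, the task reduces to bounding $\dr{\alpha}{\sinh(Y)}{a\sinh(Y') + b}$ for independent standard Gaussians $Y, Y'$, where $a = \SmS{f}{y}{t}/\SmS{f}{x}{t} \in [e^{-t}, e^t]$ and $|b| \le s$. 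Applying the bijection $\arsinh$ to both sides converts this into $\dr{\alpha}{Y}{g(Y')}$, where $g(y) := \arsinh(a\sinh(y) + b)$.

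Next I would establish pointwise estimates on the near-identity map $g$. By the triangle inequality,
\[
|g(y) - y| \le |\arsinh(a\sinh(y)) - y| + |\arsinh(a\sinh(y)+b) - \arsinh(a\sinh(y))| \le |\log a| + |b| \le t + s,
\]
since the first summand vanishes at $y = 0$ and tends monotonically to $\pm\log a$ as $y \to \pm\infty$, while the second is at most $|b|$ by the mean-value theorem and the fact that $|\arsinh'| \le 1$; a similar computation controls $g'(y)$. Substituting $u = g^{-1}(w)$ in the integral defining the R\'enyi divergence then yields the key representation
\[
\dr{\alpha}{Y}{g(Y')} = \frac{1}{\alpha - 1} \log \ex{U \sim \mathcal{N}(0,1)}{g'(U)^{\alpha} \exp\!\left(\tfrac{\alpha}{2}(U^2 - g(U)^2)\right)},
\]
which is the central quantity to bound.

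To obtain the precise bound $\varepsilon = 2(\sqrt{t} + s)$, I would separate the translation contribution ($a = 1$) from the scaling contribution ($b = 0$) and control each through direct estimation of the above expectation, combining the two via the approximate triangle inequality for the CDP pseudo-metric $\sqrt{2\,\dr{\alpha}{\cdot}{\cdot}/\alpha}$. The main obstacle is the scaling piece: unlike the Laplace log-normal and uniform log-normal constructions---where multiplication by $a$ reduces cleanly to a Gaussian reparametrization $Y \mapsto Y + \log(a)/\sigma$ and gives a bound that is linear in $t$---for the arsinh-normal the map $y \mapsto a\sinh(y)$ acts as a shift by $\log a$ only asymptotically in the tails and as a multiplicative rescaling near the origin. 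Bounding the resulting expectation by splitting $U$ into a bulk regime (where $g(U) \approx U$ to second order) and a tail regime (where the Gaussian mass decays) is what forces the weaker $\sqrt{t}$ rather than $t$ dependence. The translation piece contributes $2s$ to $\varepsilon$ and the scaling piece contributes $2\sqrt{t}$, yielding $\varepsilon \le 2(\sqrt t + s)$ as claimed.
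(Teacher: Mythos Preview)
Your outline is essentially the paper's: pull back to Gaussian space via $\arsinh$, separate the additive and multiplicative distortions, and recombine using the CDP triangle inequality (Lemma~\ref{lem:cdptriangle}). The pointwise estimate $|g(y)-y|\le|\log a|+|b|$ and the change-of-variables representation you write down are both correct and both appear (with the inverse convention for $g$) in the paper's argument.

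The execution differs in two places. For the \emph{additive} piece the paper does not go through the integral representation at all: it bounds the derivative of the log-density of $X=\sinh(Y)$ directly, obtaining $\bigl|\tfrac{d}{dx}\log f_X(x)\bigr|=\bigl|\tfrac{\arsinh(x)}{\sqrt{1+x^2}}+\tfrac{x}{1+x^2}\bigr|\le\tfrac{2}{3}+\tfrac{1}{2}=\tfrac{7}{6}$, which gives $\dr{\infty}{X}{X+s}\le\tfrac{7}{6}|s|$ and then the CDP bound via Proposition~\ref{prop:pure-cdp}. This is considerably shorter than estimating the Gaussian expectation for $g(y)=\arsinh(\sinh(y)+b)$.

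For the \emph{multiplicative} piece the paper does not use a bulk/tail split. Instead it combines your bound $|g(y)-y|\le|t|$ with the identity $g(y)^2-y^2=(g(y)-y)^2+2y\,(g(y)-y)\le t^2+2\max\{0,-ty\}$, together with the derivative bound $(1/g'(y))^2=1+(e^{2t}-1)/\cosh^2 y\le e^{2\max\{0,t\}}$ (which you allude to but do not derive), and then evaluates $\ex{Y}{e^{\frac{\alpha-1}{2}(t^2+2\max\{0,|t|Y\})}}$ in closed form. The $\sqrt{t}$ dependence is produced by this linear-in-$Y$ term in the exponent, not by a regime decomposition; so while your sketch arrives at the right constants, the mechanism you describe is not quite the one that actually delivers the bound.
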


\subsection{Our Contributions: Private Mean Estimation}

We study the following basic statistical estimation problem. Let $\mathcal{D}$ be a distribution over $\mathbb{R}$ with mean $\mu$ and let $X = (X_1, \dots, X_n)$ consist of i.i.d.~samples from $\mathcal{D}$. Our goal is to design a differentially private algorithm $M$ for estimating $\mu$ from the sample $X$.

The algorithmic framework we propose is as follows. We begin with a crude estimate on the range of the distribution mean, assuming $\mu \in [a, b]$.\footnote{Assuming an a priori bound on the the mean is necessary to guarantee CDP. However, such a bound can, under reasonable assumptions, be discovered by an $(\varepsilon, \delta)$-DP algorithm~\cite{KarwaV18}.} Then we compute a trimmed and truncated mean of the sample $X$. That is, $$f(x) = \left[\frac{x_{(m+1)}+x_{(m+2)}+\cdots+x_{(n-m)}}{n-2m}\right]_{[a,b]},$$ where $x_{(1)} \le x_{(2)} \le \cdots \le x_{(n)}$ denotes the sample in sorted order (a.k.a.~the order statistics) and $[y]_{[a,b]} = \min\{\max\{y,a\},b\}$ denotes truncation to the interval $[a,b]$. In other words, $f(x)$ first discards the largest $m$ samples and the smallest $m$ samples  from $x$, then computes the mean of the remaining $n-2m$ samples, and finally projects this to the interval $[a,b]$. Then we release $M(x)=f(x)+\SmS{f}{x}{t} \cdot Z$ for $Z$ sampled from an admissible distribution for $t$-smoothed sensitivity. This framework requires picking a noise distribution $Z$, a trimming parameter $m \in \mathbb{Z}$ with $n>2m\ge0$, and a smoothing parameter $t>0$.

Our mean estimation framework is versatile and may be applied to many distributions. We prove the following two illustrative results for this framework. The first gives a strong accuracy guarantee under a correspondingly strong distributional assumption, whereas the second gives a weaker accuracy guarantee under minimal distributional assumptions.

\begin{thm}[Mean Estimation for Symmetric, Subgaussian Distributions.  Corollary \ref{cor:symm-main}]\label{thm:intro-symm}
Let $\varepsilon,\sigma>0$, $a<b$, and $n \in \mathbb{Z}$ with $n \ge O(\log((b-a)/\sigma)/\varepsilon)$. 
There exists a $\frac12\varepsilon^2$-CDP algorithm $M:\mathbb{R}^n \to \mathbb{R}$ such that the following holds. 
Let $\mathcal{D}$ be a $\sigma$-subgaussian distribution  that is symmetric about its mean $\mu \in [a, b]$. Then $$\ex{X \leftarrow \mathcal{D}^n}{(M(X)-\mu)^2} \le \frac{\sigma^2}{n} + \frac{\sigma^2}{n^2} \cdot O\left( \frac{\log((b-a)/\sigma)}{\varepsilon} +  \frac{\log n}{\varepsilon^2}\right).$$
\end{thm}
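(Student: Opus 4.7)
The plan is to instantiate the framework using Laplace log-normal noise from Section~\ref{sec:lln}: let $Z \sim \mathsf{LLN}(\sigma_0)$ for a fixed constant $\sigma_0$ and set $s,t = \Theta(\varepsilon)$ so that $M(X) = f(X) + \tfrac{1}{s}\,\SmS{f}{X}{t}\,Z$ is $\tfrac12\varepsilon^2$-CDP via the earlier proposition, with $\mathbb{E}[Z^2]/s^2 = O(1/\varepsilon^2)$. Take the trimming parameter $m = \Theta(\log(n(b-a)/\sigma)/\varepsilon)$, which fits within $n > 2m$ under the hypothesis on $n$. Because $Z$ is mean zero and independent of $X$, the MSE splits:
$$\mathbb{E}[(M(X) - \mu)^2] = \mathbb{E}[(f(X) - \mu)^2] + \frac{\mathbb{E}[Z^2]}{s^2}\,\mathbb{E}[\SmS{f}{X}{t}^2].$$

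For the statistical piece, I would exploit the symmetry of $\mathcal{D}$: the unprojected trimmed mean $\bar f(X)$ is unbiased for $\mu$, and a direct variance computation on symmetric subgaussian order statistics gives $\mathrm{Var}(\bar f(X)) = \sigma^2/(n - 2m) + O(\sigma^2/n^2)$. Projection onto $[a,b]$ only reduces the error when $\mu \in [a,b]$; on the tail event $|\bar f(X) - \mu| > (b-a)/2$ it contributes at most $(b-a)^2\,e^{-\Omega(n)}$ by subgaussian concentration, which the hypothesis $n \gtrsim \log((b-a)/\sigma)/\varepsilon$ renders negligible. Plugging in $m$ yields $\mathbb{E}[(f(X)-\mu)^2] \leq \sigma^2/n + O(\sigma^2 \log((b-a)/\sigma)/(n^2\varepsilon))$, with the extra $O(\sigma^2\log n/(n^2\varepsilon))$ subsumed by the noise contribution below.

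The main technical step is bounding $\mathbb{E}[\SmS{f}{X}{t}^2]$. Since replacing one record shifts each order statistic by at most one rank, any $y$ with $d(X,y) \leq k < m$ has trimmed window contained in $[X_{(m-k)}, X_{(n-m+k+1)}]$, so $\LS{\bar f}{y} \leq (X_{(n-m+k+1)} - X_{(m-k)})/(n-2m)$; unconditionally, truncation gives $\LS{f}{y} \leq b-a$. Therefore
$$\SmS{f}{X}{t} \leq \max_{0 \leq k < m} e^{-tk}\,\frac{X_{(n-m+k+1)} - X_{(m-k)}}{n-2m} + (b-a)\,e^{-tm}.$$
The condition $tm \gtrsim \log(n(b-a)/\sigma)$ keeps the truncation term at most $\sigma/n$. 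For the first term, subgaussianity of $\mathcal{D}$ gives $\mathbb{E}[(X_{(n-m+k+1)} - X_{(m-k)})^2] = O(\sigma^2 \log(n/(m-k)))$ together with subgaussian tails on each gap; combining with the exponential damping $e^{-2tk}$ yields $\mathbb{E}[\max_k \cdots] = O(\sigma^2 \log n/n^2)$. Multiplying by $O(1/\varepsilon^2)$ produces the noise contribution $O(\sigma^2 \log n/(n^2\varepsilon^2))$, matching the theorem.

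The main obstacle will be proving this expected-max bound without losing an extra factor of $1/\varepsilon$. The naive estimate $\mathbb{E}[\max_k Y_k^2] \leq \sum_k \mathbb{E}[Y_k^2]$ costs $1/(1 - e^{-2t}) \sim 1/\varepsilon$, which is too much. A sharper argument must apply subgaussian tail bounds to each individual gap around its typical scale $\sigma\sqrt{\log(n/(m-k))}$ and then union bound over $k$, so that the exponential damping $e^{-tk}$ overwhelms the slow logarithmic growth of the typical scale and the $k = 0$ term dominates the expectation up to a multiplicative constant.
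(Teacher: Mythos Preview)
Your overall framework matches the paper's: Laplace log-normal noise with $s,t=\Theta(\varepsilon)$, the truncated trimmed mean with $m=\Theta(\log((b-a)/\sigma)/\varepsilon)$, and the MSE split into a statistical term plus a noise term. (The paper truncates the \emph{inputs} to $[a,b]$ rather than the output, which saves a factor of $n-2m$ in the tail contribution of the smooth sensitivity, but your output-truncation variant also goes through with the slightly larger $m$ you chose.)

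Where you diverge is in the smooth-sensitivity step, and you are making it much harder than necessary. You correctly derive
\[
\SmS{f}{X}{t} \;\le\; \max_{0\le k<m} e^{-tk}\,\frac{X_{(n-m+k+1)}-X_{(m-k)}}{n-2m} \;+\; (b-a)\,e^{-tm},
\]
and then worry about controlling the expected square of the max without losing a $1/\varepsilon$ factor. But every gap satisfies $X_{(n-m+k+1)}-X_{(m-k)}\le X_{(n)}-X_{(1)}$ and $e^{-tk}\le 1$, so the entire max is bounded pointwise by $(X_{(n)}-X_{(1)})/(n-2m)$. For $\sigma$-subgaussian data one has $\mathbb{E}\bigl[(X_{(n)}-X_{(1)})^2\bigr]=O(\sigma^2\log n)$ (e.g.\ via $\mathbb{E}[\max_i X_i^2]\le 2\sigma^2\log(2n)$), which immediately gives
\[
\mathbb{E}\!\left[\SmS{f}{X}{t}^2\right]\;\le\;\frac{O(\sigma^2\log n)+e^{-2mt}(b-a)^2}{(n-2m)^2}.
\]
This is exactly the paper's Lemma~\ref{lem:ss-subg}, and it eliminates your ``main obstacle'' entirely: there is no need to analyse individual order-statistic gaps, no union bound over $k$, and no risk of an extra $1/\varepsilon$. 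Your proposed sharper argument would presumably work, but it is strictly more labour for no gain.

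One small imprecision: your claimed variance $\sigma^2/(n-2m)+O(\sigma^2/n^2)$ for the symmetric trimmed mean is stronger than what you actually need or can easily prove; the paper's elementary bound $\mathrm{Var}(\bar f(X))\le n\sigma^2/(n-2m)^2=\sigma^2/n+O(m\sigma^2/n^2)$ (Proposition~\ref{prop:sym-trim-var}) already suffices once you plug in your choice of $m$.
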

The first term $\frac{\sigma^2}{n}$ is exactly the non-private error required for this problem and the second term is the cost of privacy, which is a lower order term for large $n$. 
\begin{thm}[Mean Estimation for General Distributions. Corollary \ref{cor:main-asymm}]\label{thm:intro-asymm}
Let $\varepsilon>0$ and $\underline\sigma>0$ and $a<b$. Let $n \ge O(\log(n(b-a)/\underline\sigma)/\varepsilon)$. Then there exists a $\frac12\varepsilon^2$-CDP algorithm $M : \mathbb{R}^n \to \mathbb{R}$ such that the following holds. Let $\mathcal{D}$ be a distribution with mean $\mu \in [a,b]$ and variance $\sigma^2\ge\underline\sigma^2$. Then $$\ex{X \leftarrow \mathcal{D}^n}{\left(M(X)-\mu\right)^2} \le \frac{\sigma^2}{n} \cdot O\left( \frac{\log(n(b-a)/\underline\sigma)}{\varepsilon} + \frac{1}{\varepsilon^2}\right).$$
\end{thm}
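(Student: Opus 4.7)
The plan is to instantiate the framework of Section~1.3 with $f$ equal to the $m$-trimmed, $[a,b]$-truncated sample mean together with Laplace log-normal noise $Z\sim\mathsf{LLN}(1)$, releasing $M(x)=f(x)+\tfrac{1}{s}\SmS{f}{x}{t}\cdot Z$. The $\tfrac12\varepsilon^2$-CDP guarantee is immediate from the Laplace log-normal proposition upon setting $t,s=\Theta(\varepsilon)$, so the work lies entirely in the accuracy analysis. The trimming parameter $m$, which will depend only on $n,\varepsilon,b-a,\underline\sigma$ and not on the unknown $\sigma$, is chosen at the end.

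Since $Z$ is independent of $X$ with mean zero, the mean-squared error factors cleanly:
\[
\ex{X\gets\mathcal{D}^n}{(M(X)-\mu)^2}=\ex{X}{(f(X)-\mu)^2}+\frac{\ex{}{Z^2}}{s^2}\,\ex{X}{\SmS{f}{X}{t}^2}.
\]
For the statistical term I will show $\ex{}{(f(X)-\mu)^2}=O(\sigma^2/n+\sigma^2 m/n)$: the variance part $\sigma^2/(n-2m)$ of the middle-mean is standard, while the trimming bias for an asymmetric distribution is controlled by Chebyshev's inequality applied to the upper and lower tails, since excising the extreme $m$ order statistics shifts the sample mean by a quantity whose expected square is $O(\sigma^2 m/n)$. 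The rare event where the untruncated trimmed mean falls outside $[a,b]$ contributes at most $(b-a)^2$ times its probability and is absorbed into the choice of $m$. For the sensitivity term I will use the pointwise bound $\LS{f}{y}\le\min\{(y_{(n-m+1)}-y_{(m)})/(n-2m),\,(b-a)/(n-2m)\}$, which after the $k$-neighbour maximisation underlying smoothed sensitivity gives
\[
\SmS{f}{x}{t}\le\tfrac{1}{n-2m}\max_{0\le k<m}e^{-tk}\bigl(x_{(n-m+k+1)}-x_{(m-k)}\bigr)+\tfrac{(b-a)\,e^{-tm}}{n-2m}.
\]
Chebyshev applied to the empirical CDF then yields $\ex{}{(x_{(n-m+k+1)}-x_{(m-k)})^2}=O(\sigma^2 n/(m-k))$, whence $\ex{}{\SmS{f}{X}{t}^2}=O(\sigma^2/(nm))$ up to an exponentially small correction from the $(b-a)e^{-tm}$ term.

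Multiplying the sensitivity bound by $\ex{}{Z^2}/s^2=\Theta(1/\varepsilon^2)$ yields a noise contribution $O(\sigma^2/(nm\varepsilon^2))$, making the total MSE $O(\sigma^2/n+\sigma^2 m/n+\sigma^2/(nm\varepsilon^2))$; taking $m=\Theta(\log(n(b-a)/\underline\sigma)/\varepsilon)$---large enough both to suppress the $(b-a)e^{-tm}$ truncation correction and to control the residual probability that the sample mean exits $[a,b]$---produces the advertised bound $(\sigma^2/n)\cdot O(\log(n(b-a)/\underline\sigma)/\varepsilon+1/\varepsilon^2)$. The main obstacle, relative to the symmetric subgaussian case of Theorem~\ref{thm:intro-symm}, is the absence of subgaussian tails: with only a second-moment assumption I must rely on Chebyshev, which yields merely polynomially-light order-statistic fluctuations and forces both $b-a$ and the lower bound $\underline\sigma$ into the logarithm; it is this weaker concentration that is responsible for the multiplicative $(\sigma^2/n)\cdot(\cdots)$ form of the bound rather than the additive $\sigma^2/n+\cdots$ form of the symmetric result.
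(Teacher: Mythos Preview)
Your plan follows the paper's Theorem~\ref{thm:sym-subg-main2}: output-truncated trimmed mean plus smooth-sensitivity noise, with $m$ chosen last. The paper's route is considerably simpler on the sensitivity side, though. Rather than bounding each order-statistic gap, it uses the crude pointwise estimate $\SmS{f}{x}{t}\le\max\{(x_{(n)}-x_{(1)})/(n-2m),\,e^{-mt}(b-a)\}$ (Lemma~\ref{lem:postrim-ss}) together with the one-line inequality $(x_{(n)}-x_{(1)})^2\le 2\sum_i(x_i-\mu)^2$, giving $\ex{}{\SmS{f}{X}{t}^2}\le 2n\sigma^2/(n-2m)^2+e^{-2mt}(b-a)^2=O(\sigma^2/n)$. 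This is weaker than your claimed $O(\sigma^2/(nm))$ but already suffices for the stated bound, and it bypasses any order-statistic moment analysis.

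Two of your justifications do not work as written. ``Chebyshev applied to the empirical CDF'' yields only $\pr{}{X_{(j)}<\mu-t}=O(n\sigma^2/(j^2t^2))$, and the second-moment integral $\int 2t\cdot t^{-2}\,dt$ diverges, so it does not give $\ex{}{(X_{(j)}-\mu)^2}=O(\sigma^2 n/j)$. That bound is correct, but it needs a Chernoff argument on the binomial count, or the direct majorization $(X_{(j)}-\mu)^2\le\frac1j\sum_{i\le j}(X_{(i)}-\mu)^2\le\frac1j\sum_i(X_i-\mu)^2$ valid when $X_{(j)}<\mu$. Likewise, the statistical bound $\ex{}{(\trim{m}(X)-\mu)^2}=O(\sigma^2 m/n)$ is not a routine variance-plus-bias split: the retained order statistics are dependent, so their average does not have variance $\sigma^2/(n-2m)$, and the bias is not a simple Chebyshev tail estimate. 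The paper proves this separately (Proposition~\ref{prop:asym-trim-var}) via a somewhat delicate second-moment calculation. Finally, with output truncation the sensitivity at distance $k\ge m$ is bounded by $b-a$, not $(b-a)/(n-2m)$; your choice of $m$ still absorbs the resulting $(b-a)e^{-tm}$ term, so this is a minor slip.
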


We stress that both Theorems \ref{thm:intro-symm} and \ref{thm:intro-asymm} use the same algorithm; the only difference is in the setting of parameters and analysis. This illustrates the versatility of our algorithmic framework and that the distribution of the inputs directly translates to the accuracy of the estimate produced. We also emphasize that, while the accuracy guarantees above depend on distributional assumptions on the dataset, the privacy guarantee requires no distributional assumptions, and holds without even assuming the data consists of i.i.d. draws.

In Section \ref{sec:experiments}, we present an experimental evaluation of our approach when applied to Gaussian data. 

\subsection{Other Related Work}

Prior work \cite{BunDRS18} showed that, when scaled to smooth sensitivity, Gaussian noise provides the relaxed notion of \emph{truncated} CDP, but does \emph{not} suffice to give CDP itself; see Section \ref{sec:tcdp}. Other than this and the original three distributions mentioned in Section \ref{sec:background}, no other distributions have (to the best of our knowledge) been shown to provide differential privacy when scaled to smooth sensitivity.

We remark that smooth sensitivity is not the only way to exploit instance-dependent sensitivity. The most notable example is the ``propose-test-release'' framework of Dwork and Lei \cite{DworkL09}. Roughly, their approach is as follows. First an upper bound on the local sensitivity is proposed. The validity of this bound is then tested in a differentially private manner.\footnote{Specifically, the differentially private test measures how far (in terms of the number of dataset elements that need to be changed) the input dataset is from any dataset with local sensitivity higher than the proposed bound. The test is only passed if the estimated distance is large enough to be confident that it is truly nonzero, despite the noise added for privacy. Thus, for the algorithm to succeed, we need the bound to hold not just for the realized dataset instance, but also all nearby datasets.} If the test passes, then this bound can be used in place of the global sensitivity to release the desired quantity. If the test fails, the algorithm terminates without producing an estimate. This approach inherently requires relaxing to approximate differential privacy, to account for the small probability that the test passes erroneously. 

Dwork and Lei \cite{DworkL09} apply their method to the trimmed mean to obtain asymptotic results (rather than finite sample bounds like ours). They obtain a bound on the local sensitivity of the trimmed mean using the interquantile range of the data, which is itself estimated by a propose-test-release algorithm. Then they add noise proportional to this bound. This requires relaxing to approximate differential privacy and also requires assuming that the data distribution has sufficient density at the truncation points.

The mean and median (the extreme cases of the trimmed mean) have both been studied extensively in the differential privacy literature. We limit our discussion here to work in the central model of differential privacy, which is the model we study, though there has also been much work on mean estimation in the local model of privacy \cite{JosephKMW18,GaboardiRS18,DuchiR19}.

Nissim, Raskhodnikova, and Smith \cite{NissimRS07} analyze the smooth sensitivity of the median, but they do not apply it to mean estimation or give any average-case bounds for the smooth sensitivity of the median.

Smith \cite{Smith08} gave a general method for private point estimation, with asymptotic efficiency guarantees for general ``asymptotically normal'' estimators. This method is ultimately based on global sensitivity and, in large part due to its generality, does not provide good finite sample complexity guarantees. 

Karwa and Vadhan \cite{KarwaV18} consider confidence interval estimates for Gaussians. Although they work in a different setting, their guarantees are similar to Theorem \ref{thm:intro-symm} (although weaker in logarithmic factors). They propose a two-step algorithm: First a crude bound on the data is computed. Then the data is truncated using this bound and the mean is estimated via global sensitivity.
Kamath, Li, Singhal, and Ullman \cite{KamathLSU19} provide algorithms for learning multivariate Gaussians (extending the work of Karwa and Vadhan). We note that their algorithm does not readily extend to heavy-tailed distributions like ours does (cf.~Theorem \ref{thm:intro-asymm}).

Feldman and Steinke \cite{FeldmanS18} use a median-of-means algorithm to privately estimate the mean, yielding a guarantee similar to Theorem \ref{thm:intro-asymm}. Specifically, their algorithm partitions the dataset into evenly-sized subdatasets and computes the mean of each subdataset. Then a private approximate median is computed treating each subdataset mean as a single data point. This algorithm is simple and is applicable to any low-variance distribution, but is not as accurate as our algorithm when the input data distribution is well-behaved. Our results (Theorems \ref{thm:intro-symm} and \ref{thm:intro-asymm}) can be viewed as providing a unified approach that \emph{simultaneously} matches the results of Karwa and Vadhan \cite{KarwaV18} for Gaussians and Feldman and Steinke \cite{FeldmanS18} for general distributions (and also everything in between).

The smooth sensitivity framework has been applied to other problems. Some examples include learning decision forests \cite{FletcherI17}, principal component analysis \cite{GonenG17}, analysis of outliers \cite{OkadaFS15}, and analysis of graphical data \cite{KarwaRSG11,KasiviswanathanNRS13,WangW13}.



\section{Preliminaries}

\subsection{Differential Privacy}




We refer the reader to Section~\ref{sec:background} for the definitions of differential privacy and concentrated differential privacy.

The canonical CDP algorithm is Gaussian noise addition. Adding $N(0,\sigma^2)$ to a sensitivity-$\Delta$ function attains $\frac{\Delta^2}{2\sigma^2}$-CDP.
\begin{lem}
Let $\mu,\mu',\sigma\in\mathbb{R}$ with $\sigma>0$. Then $$\forall \alpha\in(1,\infty)~~~~\dr{\alpha}{N(\mu,\sigma^2)}{N(\mu',\sigma^2)} = \alpha \frac{(\mu-\mu')^2}{2\sigma^2}.$$
\end{lem}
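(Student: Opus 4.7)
The plan is a direct calculation using the integral form of the R\'enyi divergence. First I would rewrite
\[
\dr{\alpha}{N(\mu,\sigma^2)}{N(\mu',\sigma^2)} = \frac{1}{\alpha-1}\log\int p(x)^\alpha q(x)^{1-\alpha}\,dx,
\]
where $p$ and $q$ are the densities of $N(\mu,\sigma^2)$ and $N(\mu',\sigma^2)$ respectively. Since the normalizing constants $(2\pi\sigma^2)^{-1/2}$ raised to $\alpha$ and $1-\alpha$ multiply to $(2\pi\sigma^2)^{-1/2}$, the integrand is $(2\pi\sigma^2)^{-1/2}\exp(-E(x)/2\sigma^2)$ with $E(x) = \alpha(x-\mu)^2+(1-\alpha)(x-\mu')^2$.

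Next I would complete the square in $E(x)$. Expanding gives $E(x) = x^2 - 2x\tilde\mu + \alpha\mu^2+(1-\alpha)\mu'^2$, where $\tilde\mu := \alpha\mu+(1-\alpha)\mu'$. Thus
\[
E(x) = (x-\tilde\mu)^2 + \bigl(\alpha\mu^2+(1-\alpha)\mu'^2 - \tilde\mu^2\bigr).
\]
A short expansion of $\tilde\mu^2$ shows that the constant term equals $\alpha(1-\alpha)(\mu-\mu')^2$, i.e.\ the same identity one uses to derive the variance of a Bernoulli mixture.

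Finally, pulling the constant out of the integral leaves a Gaussian density centered at $\tilde\mu$ with variance $\sigma^2$, which integrates to $1$. Therefore
\[
\int p(x)^\alpha q(x)^{1-\alpha}\,dx = \exp\!\left(\frac{\alpha(\alpha-1)(\mu-\mu')^2}{2\sigma^2}\right),
\]
and taking $\frac{1}{\alpha-1}\log$ of both sides yields exactly $\alpha(\mu-\mu')^2/(2\sigma^2)$, as claimed.

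There is no real obstacle here; the only step that requires a bit of care is the algebraic identity $\alpha\mu^2+(1-\alpha)\mu'^2-\tilde\mu^2 = \alpha(1-\alpha)(\mu-\mu')^2$, and the implicit verification that the integral converges, which holds because $\alpha>1$ keeps the quadratic coefficient in $E(x)$ positive (it equals $1$). No assumption that $\alpha\in\mathbb{Z}$ or that $\mu\neq\mu'$ is needed.
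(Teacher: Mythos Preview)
Your argument is correct: the direct computation via completing the square in the exponent is exactly the standard derivation, and every step checks out (including the algebraic identity and the observation that the quadratic coefficient in $E(x)$ is $1$ so the Gaussian integral is finite and equals $1$). The paper itself states this lemma without proof, treating it as a known fact (it is, e.g., \cite[Lem.~2.4]{BunS16}), so your write-up simply supplies the omitted standard calculation.
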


The relationship between concentrated differential privacy and pure/approximate differential privacy is summarized by the following lemma.
\begin{lem}[\cite{BunS16}]
Let $M : \mathcal{X}^n \to \mathcal{Y}$ be a randomized algorithm. If $M$ satisfies $(\varepsilon,0)$-differential privacy, then it satisfies $\frac12\varepsilon^2$-CDP. If $M$ satisfies $\frac12\varepsilon^2$-CDP, then it satisfies $\left(\frac12\varepsilon^2+\varepsilon \cdot \sqrt{2\log(1/\delta)},\delta\right)$-differential privacy for all $\delta>0$.
\end{lem}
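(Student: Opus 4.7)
The plan is to prove the two implications separately. Both are standard consequences of basic facts about Rényi divergence and the privacy loss random variable.

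For the first implication ($\varepsilon$-DP implies $\tfrac12\varepsilon^2$-CDP), I would fix neighboring $x,y$ and let $P=M(x)$, $Q=M(y)$. Pure DP says the privacy loss $L(z)=\log(P(z)/Q(z))$ satisfies $|L(z)|\le \varepsilon$ everywhere. The goal is to show $\mathrm{D}_\alpha(P\|Q)\le \tfrac12\varepsilon^2\alpha$ for every $\alpha>1$. The cleanest route is a two-point reduction: by joint quasi-convexity of $\mathrm{D}_\alpha$ (or a direct argument bounding $(P/Q)^{\alpha-1}$ pointwise by a convex combination of the extremes $e^{\pm(\alpha-1)\varepsilon}$), the worst case is the pair of Bernoulli distributions with probability-mass ratio $e^\varepsilon$, namely $P_0=\mathrm{Bern}(e^\varepsilon/(1+e^\varepsilon))$ and $Q_0=\mathrm{Bern}(1/(1+e^\varepsilon))$. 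A direct calculation gives
\[
\mathrm{D}_\alpha(P_0\|Q_0)=\frac{1}{\alpha-1}\log\!\left(\frac{e^{\alpha\varepsilon}+e^{(1-\alpha)\varepsilon}}{1+e^\varepsilon}\right),
\]
and the remaining task is the elementary (but slightly fiddly) inequality that this quantity is at most $\tfrac12\varepsilon^2\alpha$. This can be proved by expanding the $\log$ and using $\log\cosh x\le x^2/2$, or by observing that the privacy loss $L$ under $P_0$ is bounded in $[-\varepsilon,\varepsilon]$ with mean equal to the KL divergence, and then applying Hoeffding's lemma to the centered variable.

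For the second implication ($\tfrac12\varepsilon^2$-CDP implies approximate DP), I would use the standard Markov-on-moments argument. Fix neighboring $x,y$ and let $L$ be the privacy loss random variable under $P=M(x)$. By CDP, $\mathbb{E}_{P}[e^{(\alpha-1)L}]=e^{(\alpha-1)\mathrm{D}_\alpha(P\|Q)}\le e^{\tfrac12\varepsilon^2\alpha(\alpha-1)}$ for every $\alpha>1$. Applying Markov,
\[
\Pr_{P}[L>\tau]\le \exp\!\left(\tfrac12\varepsilon^2\alpha(\alpha-1)-(\alpha-1)\tau\right).
\]
Substituting $\lambda=\alpha-1>0$ and optimizing (setting $\lambda=\tau/\varepsilon^2-\tfrac12$) yields the Gaussian-type tail bound $\Pr_P[L>\tau]\le \exp(-(\tau-\tfrac12\varepsilon^2)^2/(2\varepsilon^2))$ whenever $\tau>\tfrac12\varepsilon^2$. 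Setting this equal to $\delta$ gives the threshold $\varepsilon'=\tfrac12\varepsilon^2+\varepsilon\sqrt{2\log(1/\delta)}$.

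Finally, I would convert the privacy loss tail bound into approximate DP by the routine decomposition: for any measurable $S$,
\[
P(S)=\int_{S\cap\{L\le \varepsilon'\}} dP + \int_{S\cap\{L>\varepsilon'\}}dP\le e^{\varepsilon'}Q(S)+\Pr_P[L>\varepsilon']\le e^{\varepsilon'}Q(S)+\delta,
\]
and symmetrically for swapping $x$ and $y$. This yields $(\varepsilon',\delta)$-DP as required.

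The main obstacle is the two-point reduction and the final numerical inequality in the first implication, since the Markov-type argument in the second implication is essentially mechanical once the CDP hypothesis is unpacked. Everything else is bookkeeping, and both implications are already written down in~\cite{BunS16}, so I would cite that source for any detail I chose to suppress.
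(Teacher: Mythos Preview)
The paper does not actually prove this lemma; it is stated with a citation to \cite{BunS16} and no proof is given. Your proposal is correct and is essentially the argument that appears in \cite{BunS16}: the first implication via the extremal two-point distribution (equivalently, Hoeffding's lemma applied to the bounded privacy loss together with the constraint $\mathbb{E}_P[e^{-L}]=1$), and the second via Markov's inequality on the moment generating function of the privacy loss, optimized over the R\'enyi order. One small caveat: in the first implication, the phrase ``joint quasi-convexity of $\mathrm{D}_\alpha$'' is not quite the right justification for the two-point reduction; what is actually used is that maximizing the linear functional $\mathbb{E}_P[e^{(\alpha-1)L}]$ over distributions of $L$ supported on $[-\varepsilon,\varepsilon]$ subject to the single linear constraint $\mathbb{E}_P[e^{-L}]=1$ is achieved at a two-point distribution, and one checks that the endpoints $\pm\varepsilon$ are extremal. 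Your alternative route via Hoeffding's lemma avoids this subtlety entirely and is cleaner.
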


We will make specific use of the following more precise statement of the conversion from pure to concentrated differential privacy.

\begin{prop}[{\cite[Prop.~3.3]{BunS16}}]\label{prop:pure-cdp}
Let $P$ and $Q$ be probability distributions satisfying $\dr{\infty}{P}{Q}\le\varepsilon$
and $\dr{\infty}{Q}{P}\le\varepsilon$. Then $\dr{\alpha}{P}{Q} \le \frac12 \varepsilon^2 \alpha$ for all $\alpha\in(1,\infty)$.
\end{prop}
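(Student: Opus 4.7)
The plan is to introduce the log-likelihood ratio $L(x) = \log(P(x)/Q(x))$ and reduce the claim to a moment generating function estimate for $L$ under $P$. The hypothesis $\dr{\infty}{P}{Q} \le \varepsilon$ gives $L(x) \le \varepsilon$ and $\dr{\infty}{Q}{P} \le \varepsilon$ gives $L(x) \ge -\varepsilon$, so $L$ is a bounded random variable supported on $[-\varepsilon,\varepsilon]$. Since $\dr{\alpha}{P}{Q} = \frac{1}{\alpha-1}\log\ex{X\leftarrow P}{e^{(\alpha-1)L(X)}}$, the goal becomes showing
\[
\ex{X \leftarrow P}{e^{(\alpha-1)L(X)}} \le e^{\alpha(\alpha-1)\varepsilon^2/2}.
\]

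The main obstacle is that the crude bound from Hoeffding's lemma, $\ex{}{e^{(\alpha-1)(L-\mu_P)}} \le e^{(\alpha-1)^2\varepsilon^2/2}$ where $\mu_P = \ex{X\leftarrow P}{L(X)}$, is not enough on its own: one still needs a bound on $\mu_P$ of order $\varepsilon^2$, not the trivial $\mu_P \le \varepsilon$. This is the step where one must use \emph{both} directions of the $D_\infty$ hypothesis rather than just the bounded support.

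The key trick is the change-of-measure identity $\ex{X\leftarrow P}{e^{-L(X)}} = \ex{X\leftarrow Q}{1} = 1$. Combining this with the tangent-line convexity bound
\[
e^{-x} \le \cosh(\varepsilon) - \frac{x}{\varepsilon}\sinh(\varepsilon) \qquad \text{for } x \in [-\varepsilon,\varepsilon],
\]
and taking expectation under $P$, I get $1 \le \cosh(\varepsilon) - (\mu_P/\varepsilon)\sinh(\varepsilon)$, which rearranges to $\mu_P \le \varepsilon \tanh(\varepsilon/2) \le \varepsilon^2/2$ using the standard inequality $\tanh(y) \le y$.

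With this mean bound in hand, Hoeffding's lemma applied to $L$ (whose range is $2\varepsilon$) gives $\ex{X\leftarrow P}{e^{(\alpha-1)(L(X)-\mu_P)}} \le e^{(\alpha-1)^2\varepsilon^2/2}$, so
\[
\ex{X\leftarrow P}{e^{(\alpha-1)L(X)}} \le e^{(\alpha-1)\mu_P + (\alpha-1)^2\varepsilon^2/2} \le e^{(\alpha-1)\varepsilon^2/2 + (\alpha-1)^2\varepsilon^2/2} = e^{\alpha(\alpha-1)\varepsilon^2/2}.
\]
Dividing the logarithm by $\alpha-1$ yields $\dr{\alpha}{P}{Q} \le \frac12 \varepsilon^2\alpha$, as desired. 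The whole argument is symmetric in $P$ and $Q$, which is consistent with the symmetric hypothesis. I expect the only nontrivial step to be the mean bound via $\tanh(\varepsilon/2) \le \varepsilon/2$; everything else is a direct invocation of standard sub-Gaussian machinery.
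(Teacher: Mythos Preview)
Your argument is correct. The paper does not actually prove this proposition; it is quoted from \cite{BunS16} and used as a black box, so there is no in-paper proof to compare against. That said, your route---bound the privacy-loss variable $L=\log(P/Q)$ in $[-\varepsilon,\varepsilon]$, use the change-of-measure identity $\ex{P}{e^{-L}}=1$ together with the chord bound for the convex function $e^{-x}$ on $[-\varepsilon,\varepsilon]$ to get $\mu_P\le\varepsilon\tanh(\varepsilon/2)\le\varepsilon^2/2$, and then finish with Hoeffding's lemma---is exactly the standard proof (and essentially the one in \cite{BunS16}). One cosmetic remark: the inequality $e^{-x}\le\cosh\varepsilon-\tfrac{x}{\varepsilon}\sinh\varepsilon$ is the \emph{secant} (chord) bound for a convex function on $[-\varepsilon,\varepsilon]$, not a tangent-line bound; the mathematics is unaffected.
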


We also make use of the following group privacy property of concentrated differential privacy.

\begin{lem}\label{lem:cdptriangle}
Let $P,Q,R$ be probability distributions. Suppose $\dr{\alpha}{P}{R} \le a \cdot \alpha$ and $\dr{\alpha}{R}{Q} \le b \cdot \alpha$ for all $\alpha \in (1,\infty)$. Then, for all $\alpha \in (1,\infty)$, $$\dr{\alpha}{P}{Q} \le \alpha \cdot (\sqrt{a} + \sqrt{b})^2 \le 2\alpha \cdot (a+b).$$
\end{lem}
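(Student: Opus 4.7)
The plan is to derive a weak triangle inequality for R\'enyi divergence via H\"older, and then optimize a free parameter to get the constants $(\sqrt a+\sqrt b)^2$.

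Starting from the definition $e^{(\alpha-1)\dr{\alpha}{P}{Q}} = \int P^\alpha Q^{1-\alpha}$, I would split the integrand as $P^\alpha Q^{1-\alpha} = A \cdot B$ in a way that, after H\"older with conjugate exponents $p,q>1$ (so $1/p+1/q=1$), the factors $A^p$ and $B^q$ become R\'enyi moment integrands against the intermediate measure $R$. Taking $A=(P^{p\alpha}R^{1-p\alpha})^{1/p}$ and $B=(R^{\alpha''}Q^{1-\alpha''})^{1/q}$ with $\alpha''=1+q(\alpha-1)$ does the job: one checks that the exponents of $P$, $R$, $Q$ in $AB$ collapse to $\alpha$, $0$, $1-\alpha$, using $1/p+1/q=1$. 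H\"older then gives, after taking logs and dividing by $\alpha-1$,
$$\dr{\alpha}{P}{Q} \le \frac{p\alpha-1}{p(\alpha-1)}\,\dr{p\alpha}{P}{R} + \dr{1+q(\alpha-1)}{R}{Q}.$$

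The second step is to apply the hypotheses, which hold for \emph{every} order greater than $1$, yielding $\dr{p\alpha}{P}{R}\le a\,p\alpha$ and $\dr{1+q(\alpha-1)}{R}{Q}\le b(1+q(\alpha-1))$. Substituting and writing $p=1+s$ (so $q=1+1/s$ for $s>0$) and simplifying collapses everything nicely to
$$\dr{\alpha}{P}{Q} \le \alpha(a+b) + \frac{s\alpha^2 a}{\alpha-1} + \frac{(\alpha-1)b}{s}.$$

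The final step is to minimize the right-hand side in $s>0$. The last two terms have product $\alpha^2 ab$, so AM--GM gives a minimum of $2\alpha\sqrt{ab}$, attained at $s=(\alpha-1)\sqrt{b/a}/\alpha$. This yields $\dr{\alpha}{P}{Q}\le\alpha(a+b+2\sqrt{ab})=\alpha(\sqrt a+\sqrt b)^2$. The second inequality $(\sqrt a+\sqrt b)^2\le 2(a+b)$ is immediate from $(\sqrt a-\sqrt b)^2\ge 0$. The only real obstacle is the bookkeeping of the H\"older split so that both factors land in R\'enyi-divergence form against $R$; once that template is set up the rest is AM--GM. Degenerate cases $a=0$ or $b=0$ are handled by letting $s\to 0$ or $s\to\infty$ in the preceding bound.
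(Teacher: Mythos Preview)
Your proof is correct and follows essentially the same route as the paper: the H\"older split you carry out is exactly the proof of the triangle-like inequality the paper cites as a black box from \cite{BunDRS18} (your $p\alpha$ and $1+q(\alpha-1)$ are precisely the paper's $\beta$ and $\gamma$ satisfying $\alpha=\beta\gamma/(\beta+\gamma-1)$, and your prefactor $\frac{p\alpha-1}{p(\alpha-1)}$ equals their $\frac{\gamma}{\gamma-1}$). The subsequent optimization is the same calculation in different coordinates---the paper substitutes $\gamma=u\alpha$ and minimizes $\frac{u}{u-1}a+bu$ over $u>1$, which after writing $u=1+v$ becomes $a+b+a/v+bv$, identical (up to the factor $\alpha$) to your AM--GM step.
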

\begin{proof}
We use the following triangle-like inequality for R\'enyi divergence.
\begin{lem}[{\cite{BunDRS18}}] \label{lem:RenyiTriangle}
Let $P,Q,R$ be probability distributions and $\beta, \gamma \in (1,\infty)$. Set $\alpha = \beta\gamma/(\beta+\gamma-1)$. Then $$\dr{\alpha}{P}{Q} \leq \frac{\gamma}{\gamma-1}  \dr{\beta}{P}{R} + \dr{\gamma}{R}{Q}.$$
\end{lem}
We fix $\alpha$ and we must pick $\beta, \gamma \in (1,\infty)$ satisfying $\alpha = \beta\gamma/(\beta+\gamma-1)$ to minimize
\begin{align*}
\dr{\alpha}{P}{Q} &\le \frac{\gamma}{\gamma-1}  \dr{\beta}{P}{R} + \dr{\gamma}{R}{Q}\\
&\le \frac{\gamma}{\gamma-1}  a \beta  + b \gamma\\
&= \frac{\alpha \gamma}{\gamma-\alpha}  a  + b \gamma\\
&= \alpha \cdot  \left(\frac{ u}{ u -1}  a  + b  u \right),
\end{align*}
where the final equalities use the fact that $\beta = \alpha \cdot \frac{\gamma-1}{\gamma-\alpha}$ and the substitution $\gamma=u \alpha$.
We set $u= 1 + \sqrt{a/b}$ to minimize: 
\begin{align*}
\dr{\alpha}{P}{Q} &\le \alpha \cdot  \left(\frac{ u}{ u -1}  a  + b  u \right)\\
&= \alpha \cdot \left(\frac{1+\sqrt{a/b}}{\sqrt{a/b}} a + b (1+\sqrt{a/b})\right)\\
&= \alpha \cdot \left(a + b + 2\sqrt{ab}\right)\\
&= \alpha \cdot \left( \sqrt{a}+\sqrt{b} \right)^2\\
&\le \alpha \cdot \left( \sqrt{a}+\sqrt{b} \right)^2 + \alpha \cdot \left( \sqrt{a}-\sqrt{b} \right)^2\\
&= \alpha \cdot \left( 2a + 2b \right).
\end{align*}
\end{proof}

\subsection{Smooth Sensitivity}

The smooth sensitivity framework was introduced by Nissim, Raskhodnikova, and Smith \cite{NissimRS07}. We begin by recalling the definition of local (and global) sensitivity, as well as the local sensitivity at distance $k$.

\begin{defn}[Global/Local Sensitivity]
Let $f:\mathcal{X}^n \to\mathbb{R}$ and $x \in \mathcal{X}^n$.
The local sensitivity of $f$ at $x$ is defined as $$\LS{f}{x} = \sup_{x' \in \mathcal{X}^n : d(x,x') \le 1} |f(x')-f(x)|,$$
where $d(x',x) = |\{i \in [n] : x'_i \ne x_i\}|$ is the number of entries on which $x$ and $x'$ differ. For $k \in \mathbb{N}$, the local sensitivity at distance $k$ of $f$ at $x$ is defined as $$\LSk{f}{k}{x} = \sup_{x' \in \mathcal{X}^n : d(x',x) \le k} \LS{f}{x'} = \sup_{x',x'' \in \mathcal{X}^n : d(x,x') \le k, d(x',x'') \le 1} |f(x'')-f(x')|.$$
The global sensitivity of $f$ is defined as $$\GS{f} = \sup_{x'\in\mathcal{X}^n} \LS{f}{x'} = \sup_{x',x''\in\mathcal{X}^n : d(x'',x')\le 1} |f(x'')-f(x')|.$$
\end{defn}

This allows us to define the smooth sensitivity.

\begin{defn}[Smooth Sensitivity \cite{NissimRS07}]\label{defn:ss}
Let $f:\mathcal{X}^n \to\mathbb{R}$ and $x \in \mathcal{X}^n$. For $t>0$, we define the $t$-smoothed sensitivity of $f$ at $x$ as $$\SmS{f}{t}{x} = \max_{k\ge 0} e^{-tk} \LSk{f}{k}{x} = \sup_{x',x''\in\mathcal{X}^n : d(x',x'')\le 1} e^{-t d(x,x')} |f(x'')-f(x')|,$$ where $d(x',x) = |\{i \in [n] : x'_i \ne x_i\}|$ is the number of entries on which $x$ and $x'$ differ.
\end{defn}

The smooth sensitivity has two key properties: First, it is an upper bound on the local sensitivity. Second, it has multiplicative sensitivity bounded by $t$. Indeed, it can be shown that it is the smallest function with these two properties:

\begin{lem}[\cite{NissimRS07}]
Let $f,g : \mathcal{X}^n \to \mathbb{R}$. Suppose that, for all $x,x' \in \mathcal{X}^n$ differing in a single entry, $$g(x) \ge \LS{f}{x} ~~~\text{ and }~~~ g(x) \le e^t g(x').$$ Then $g(x) \ge \SmS{f}{x}{t}$.
\end{lem}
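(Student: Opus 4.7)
The plan is to reduce the claim to showing that $g(x) \ge e^{-tk}\LSk{f}{k}{x}$ for every $x \in \mathcal{X}^n$ and every integer $k \ge 0$, after which the conclusion $g(x) \ge \SmS{f}{x}{t}$ follows by taking the supremum over $k$ in the first equality of Definition~\ref{defn:ss}. Unpacking $\LSk{f}{k}{x} = \sup_{y : d(x,y) \le k} \LS{f}{y}$, it suffices to fix an arbitrary $y \in \mathcal{X}^n$ with $d(x,y) \le k$ and show that $g(x) \ge e^{-tk} \LS{f}{y}$.

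The main tool is the ability to walk from $x$ to $y$ one coordinate at a time. Setting $\ell = d(x,y) \le k$, I choose a sequence $x = x_0, x_1, \dots, x_\ell = y$ with $d(x_i, x_{i+1}) \le 1$ for each $i$ (just flip the disagreeing coordinates one by one). Iterating the multiplicative hypothesis $g(a) \le e^t g(b)$ along each edge of this path gives
\[
g(y) = g(x_\ell) \le e^{t} g(x_{\ell-1}) \le \cdots \le e^{t\ell} g(x_0) = e^{t\ell} g(x) \le e^{tk} g(x).
\]
Combining this with the pointwise lower bound $g(y) \ge \LS{f}{y}$ given by the first hypothesis yields $g(x) \ge e^{-tk} g(y) \ge e^{-tk} \LS{f}{y}$, which is exactly what was needed.

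Taking the supremum of the right-hand side over $y$ with $d(x,y) \le k$ gives $g(x) \ge e^{-tk} \LSk{f}{k}{x}$, and a further supremum over $k \ge 0$ yields $g(x) \ge \SmS{f}{x}{t}$, completing the argument. I do not expect any real obstacle here: the hypothesis $g(x) \le e^t g(x')$ is quantified over all neighbouring pairs and so applies in either direction along the path, and the path construction uses nothing beyond the fact that Hamming distance is realized by single-coordinate edits.
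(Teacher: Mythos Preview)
Your argument is correct. The paper does not actually supply a proof of this lemma; it is only cited from \cite{NissimRS07}, so there is nothing to compare against. One minor point worth making explicit: the step $e^{t\ell} g(x) \le e^{tk} g(x)$ uses $g(x) \ge 0$, which follows from the hypothesis $g(x) \ge \LS{f}{x} \ge 0$ but is not stated outright.
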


For our applications, we could replace the smooth sensitivity with any function $g$ satisfying the conditions of the above lemma. This is useful if, for example, computing the smooth sensitivity exactly is computationally expensive and we instead wish to use an approximation. Nissim, Raskhodnikova, and Smith \cite{NissimRS07} show, for instance, that an upper bound on the smooth sensitivity of the median can be computed in sublinear time.

\subsection{Subgaussian Distributions}

We study the class of subgaussian distributions.

\begin{defn}
A distribution $\mathcal{D}$ on $\mathbb{R}$ is $\sigma$-subgaussian if $$\exists \mu \in \mathbb{R} ~\forall t \in \mathbb{R} ~~~ \ex{X \leftarrow \mathcal{D}}{e^{t(X-\mu)}} \le e^{t^2\sigma^2/2}.$$
We say that a distribution is subgaussian if it is $\sigma$-subgaussian for some finite $\sigma$.
\end{defn}

Note that $N(\mu,\sigma^2)$, a Gaussian with variance $\sigma^2$, is $\sigma$-subgaussian. Furthermore, any distribution supported on the interval $[a,b]$ is $\frac{b-a}{2}$-subgaussian.

If $\mathcal{D}$ is $\sigma$-subgaussian and has mean zero, then $\ex{X \leftarrow \mathcal{D}}{X^2} \le \sigma^2$ and, for all $\lambda>0$, $$\pr{X \leftarrow \mathcal{D}}{X\ge\lambda} = \ex{X \leftarrow \mathcal{D}}{\mathbb{I}[X-\lambda \ge 0]} \le \ex{X \leftarrow \mathcal{D}}{e^{t(X-\lambda)}} = e^{t^2\sigma^2/2-t\lambda} = e^{-\lambda^2/2\sigma^2},$$ where the final equality follows from setting $t=\lambda/\sigma^2$.

\section{Smooth Sensitivity Noise Distributions}\label{sec:dists}

\subsection{Laplace Log-Normal}\label{sec:lln}

\newcommand{\LLN}[1]{\mathsf{LLN}(#1)}
\begin{defn}
Let $X$ and $Y$ be independent random variables with $X$ a standard Laplace (density $e^{-|x|}/2$) and $Y$ a standard Gaussian (density $e^{-y^2/2}/\sqrt{2\pi}$). Let $\sigma>0$ and $Z=X \cdot e^{\sigma Y}$. The distribution of $Z$ is denoted $\LLN{\sigma}$.
\end{defn}

\nocite{NissimRS07}

Note that $\LLN{\sigma}$ is a symmetric distribution. It has mean $0$ and variance $2e^{2\sigma^2}$. More generally, for all $p>0$, $$\ex{Z\leftarrow\LLN{\sigma}}{|Z|^p} = \Gamma(p+1) \cdot e^{\sigma^2 p^2 /2} ,$$ where $\Gamma$ is the gamma function satisfying $\Gamma(p+1)=p!$ if $p$ is an integer.

\begin{thm}\label{thm:SS-DP-LLN}
Let $Z \leftarrow \LLN{\sigma}$ and $s,t\in\mathbb{R}$. Then, for all $\alpha\in(1,\infty)$, $$\left. \begin{array}{c} \dr{\alpha}{Z}{e^t Z+s} \\  \dr{\alpha}{e^t Z+s}{Z}  \end{array} \right\} \le \frac{\alpha}{2} \cdot \left(\frac{|t|}{|\sigma|} + e^{\frac32 \sigma^2} \cdot |s| \right)^2 \le \alpha \cdot \left( \frac{t^2}{\sigma^2} + e^{3\sigma^2} s^2\right).$$
\end{thm}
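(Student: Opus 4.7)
The plan is to split $\dr{\alpha}{Z}{e^t Z + s}$ using Lemma~\ref{lem:cdptriangle} with the intermediate distribution $R = Z + s$. The goal is to establish a ``pure shift'' bound $\dr{\alpha}{Z}{Z+s} \le \tfrac{\alpha s^2 e^{3\sigma^2}}{2}$ and a ``pure scale'' bound $\dr{\alpha}{Z+s}{e^t Z + s} \le \tfrac{\alpha t^2}{2\sigma^2}$. Combining via Lemma~\ref{lem:cdptriangle} with $a = s^2 e^{3\sigma^2}/2$ and $b = t^2/(2\sigma^2)$ then gives exactly $\tfrac{\alpha}{2}(|t|/\sigma + e^{3\sigma^2/2}|s|)^2$. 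The reverse direction $\dr{\alpha}{e^t Z + s}{Z}$ follows by the identical argument, since both sub-bounds will hold with arguments swapped.

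The scale piece is the easy one. Translation invariance of R\'enyi divergence gives $\dr{\alpha}{Z+s}{e^t Z + s} = \dr{\alpha}{Z}{e^t Z}$. Writing $Z = X e^{\sigma Y}$ and $e^t Z = X e^{\sigma (Y + t/\sigma)}$, both distributions arise as the pushforward of a product distribution on $(x,y)$ through the same map $(x,y)\mapsto x e^{\sigma y}$, with identical $\Lap{}$ marginal on $X$ and the Gaussian marginal on $Y$ shifted by $t/\sigma$. By data processing and additivity of R\'enyi divergence on product distributions, $\dr{\alpha}{Z}{e^t Z} \le \dr{\alpha}{N(0,1)}{N(t/\sigma,1)} = \tfrac{\alpha t^2}{2\sigma^2}$.

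The shift piece $\dr{\alpha}{Z}{Z+s}$ is the main obstacle. Naive joint couplings (for instance, conditioning on $Y$ and shifting the Laplace by $s e^{-\sigma Y}$) produce divergent integrals of the form $\ex{Y \sim N(0,1)}{e^{c e^{-2\sigma Y}}}$, so I would instead take a detour through a pure-DP bound. Specifically, the plan is to show the $L_\infty$ bounds $\dr{\infty}{Z}{Z+s},\, \dr{\infty}{Z+s}{Z} \le |s| e^{3\sigma^2/2}$ and then invoke Proposition~\ref{prop:pure-cdp}, which upgrades this to $\dr{\alpha}{Z}{Z+s} \le \tfrac{\alpha}{2}(|s| e^{3\sigma^2/2})^2 = \tfrac{\alpha s^2 e^{3\sigma^2}}{2}$. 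It suffices to establish the uniform Lipschitz estimate $|(\log f_Z)'(z)| \le e^{3\sigma^2/2}$ almost everywhere and integrate.

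To prove this Lipschitz estimate, start with the integral representation $f_Z(z) = \tfrac{1}{2}\int e^{-\sigma y - |z| e^{-\sigma y}} \phi(y)\, dy$ (where $\phi$ is the standard Gaussian density), differentiate in $z$, and divide by $f_Z(z)$ to obtain
\[
\frac{\partial_z f_Z(z)}{f_Z(z)} = -\mathrm{sign}(z) \cdot \ex{Y \sim \mu_z}{e^{-\sigma Y}},
\]
where $\mu_z$ is the probability measure with density proportional to $e^{-\sigma y - |z| e^{-\sigma y}}\phi(y)$. The key step is to observe that $\mu_z(y)/\mu_0(y) \propto e^{-|z| e^{-\sigma y}}$ is monotonically increasing in $y$, so $\mu_z$ stochastically dominates $\mu_0$; since $y \mapsto e^{-\sigma y}$ is decreasing, this forces $\ex{\mu_z}{e^{-\sigma Y}} \le \ex{\mu_0}{e^{-\sigma Y}}$. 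Finally, $\mu_0(y) \propto e^{-\sigma y}\phi(y) \propto e^{-(y+\sigma)^2/2}$ is exactly $N(-\sigma,1)$, so $\ex{\mu_0}{e^{-\sigma Y}} = e^{\sigma^2 + \sigma^2/2} = e^{3\sigma^2/2}$, and the worst case is achieved at $z = 0$. Integrating the bounded derivative (noting that $\log f_Z$ is globally Lipschitz despite a cusp at $z=0$) yields $|\log f_Z(z) - \log f_Z(z-s)| \le |s| e^{3\sigma^2/2}$, and Lemma~\ref{lem:cdptriangle} then closes out the theorem. The stochastic-domination step is the conceptual heart of the argument: it captures quantitatively how the log-normal tilt of the Laplace scale parameter smooths the $\LLN{\sigma}$ density enough to absorb additive shifts.
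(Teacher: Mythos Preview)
Your proof is correct and follows the same overall architecture as the paper: split via Lemma~\ref{lem:cdptriangle} into a pure-scale piece (handled by pushing forward a shifted Gaussian, exactly as in Lemma~\ref{lem:lln_mult}) and a pure-shift piece (handled by a uniform bound $|(\log f_Z)'|\le e^{3\sigma^2/2}$ followed by Proposition~\ref{prop:pure-cdp}, exactly as in Lemma~\ref{lem:lln_add}).

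The one place you diverge is in \emph{how} you prove the Lipschitz bound on $\log f_Z$. The paper completes the square twice to rewrite $f_Z(z)$ and $f_Z'(z)$ as Gaussian expectations with different exponential tilts, so that the ratio $f_Z'/f_Z$ becomes $-e^{3\sigma^2/2}$ times a weighted average of a factor lying in $[0,1]$. Your argument instead recognizes $f_Z'(z)/f_Z(z)=-\mathrm{sign}(z)\,\ex{\mu_z}{e^{-\sigma Y}}$ for a tilted measure $\mu_z$, observes that $\mu_z$ has monotone likelihood ratio with respect to $\mu_0=N(-\sigma,1)$, and concludes by stochastic domination that the expectation is maximized at $z=0$ where it equals $e^{3\sigma^2/2}$. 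Both arguments are short; yours is more conceptual and avoids the explicit change-of-variables bookkeeping, while the paper's makes the extremal constant $e^{3\sigma^2/2}$ appear directly from the algebra rather than from a separate moment computation.
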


\begin{lem}\label{lem:lln_mult}
Let $Z \leftarrow \LLN{\sigma}$ for $\sigma>0$. Let $t \in \mathbb{R}$ and $\alpha\in(1,\infty)$. Then $$\dr{\alpha}{Z}{e^tZ} \le \frac{\alpha t^2}{2\sigma^2}.$$
\end{lem}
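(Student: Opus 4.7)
The plan is to couple $Z$ and $e^t Z$ through their definitions and then invoke the data processing inequality for R\'enyi divergence, reducing everything to the R\'enyi divergence between two Gaussians, which we already have a closed form for.

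First I would write $e^t Z = X \cdot e^{\sigma Y + t} = X \cdot e^{\sigma(Y + t/\sigma)}$, so that $e^t Z$ is distributed as $X \cdot e^{\sigma Y'}$ where $Y' \sim N(t/\sigma, 1)$ is independent of $X$. The crucial observation is that $Z$ and $e^t Z$ can be written as the same deterministic function $g(x,y) = x e^{\sigma y}$ applied to two joint distributions that share the same $X$-marginal (a standard Laplace) and differ only in the $Y$-marginal (either $N(0,1)$ or $N(t/\sigma, 1)$).

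Next, the data processing inequality for R\'enyi divergence gives
\[
\dr{\alpha}{Z}{e^t Z} \;=\; \dr{\alpha}{g(X,Y)}{g(X,Y')} \;\le\; \dr{\alpha}{(X,Y)}{(X,Y')}.
\]
By independence, the joint R\'enyi divergence splits as a sum over coordinates; the $X$-coordinate contributes $0$ since the marginals agree, leaving
\[
\dr{\alpha}{(X,Y)}{(X,Y')} \;=\; \dr{\alpha}{N(0,1)}{N(t/\sigma,1)}.
\]

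Finally, I would invoke the Gaussian R\'enyi divergence lemma already stated in the preliminaries, which yields $\dr{\alpha}{N(0,1)}{N(t/\sigma,1)} = \alpha (t/\sigma)^2 / 2 = \alpha t^2 / (2\sigma^2)$, completing the bound. There is no real obstacle here: the only subtle point is confirming that the coupling is legitimate, i.e., that $g(X,Y')$ really does have the same law as $e^t Z$, which follows immediately from the identity $e^t \cdot X e^{\sigma Y} = X e^{\sigma(Y + t/\sigma)}$ and the fact that $Y + t/\sigma$ and $Y'$ share the law $N(t/\sigma,1)$.
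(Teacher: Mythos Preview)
Your proof is correct and follows essentially the same route as the paper: both reduce to the R\'enyi divergence between two shifted Gaussians via a postprocessing/data-processing argument. The only cosmetic difference is that the paper first conditions on $X$ (quasi-convexity) and then postprocesses the $Y$ part, whereas you postprocess the joint $(X,Y)$ and invoke tensorization; both arrive at $\dr{\alpha}{N(0,1)}{N(t/\sigma,1)}=\alpha t^2/(2\sigma^2)$.
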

\begin{proof}
Let $X$ be a standard Laplace random variable and $Y$ an independent standard Gaussian random variable. Let $Z = Xe^{\sigma Y} \sim \LLN{\sigma}$.
By the quasi-convexity and postprocessing properties of R\'enyi divergence \cite[Lem.~2.2]{BunS16}, we have $$\dr{\alpha}{Z}{e^tZ} = \dr{\alpha}{Xe^{\sigma Y}}{Xe^{\sigma Y + t}} \le \sup_x \dr{\alpha}{xe^{\sigma Y}}{xe^{\sigma Y + t}} \le \dr{\alpha}{\sigma Y}{\sigma Y + t}.$$
Finally, we can calculate that  $\dr{\alpha}{\sigma Y}{\sigma Y + t} = \frac{\alpha t^2}{2\sigma^2}$ \cite[Lem.~2.4]{BunS16}.
\end{proof}

\begin{lem}\label{lem:lln_add}
Let $Z \leftarrow \LLN{\sigma}$ for $\sigma>0$. Let $s\in \mathbb{R}$ and $\alpha\in(1,\infty)$. Then $$\dr{\alpha}{Z}{Z+s} \le \min\left\{ \frac12 e^{3 \sigma^2} s^2 \alpha, e^{
\frac32 \sigma^2} s \right\}.$$
\end{lem}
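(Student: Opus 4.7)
The plan is to reduce both branches of the claimed $\min$ to a single max-divergence bound, namely $\dr{\infty}{Z}{Z+s}\le e^{3\sigma^2/2}|s|$ (and symmetrically $\dr{\infty}{Z+s}{Z}\le e^{3\sigma^2/2}|s|$). Given this, the term $e^{3\sigma^2/2}|s|$ of the $\min$ follows from $\dr{\alpha}{\cdot}{\cdot}\le\dr{\infty}{\cdot}{\cdot}$, and the CDP-style term $\tfrac12 e^{3\sigma^2}s^2\alpha$ follows from Proposition~\ref{prop:pure-cdp}. Since $\LLN{\sigma}$ is symmetric about the origin I may assume $s\ge 0$, and the density $p_Z$ of $Z$ satisfies $p_Z(z)=p_Z(-z)$.

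Integrating out the standard Laplace conditioned on $Y$ gives $p_Z(z)=g(|z|)$ for all $z\in\mathbb{R}$, where
$$g(u)=\int_{-\infty}^{\infty}\tfrac12\, e^{-\sigma y}\,\phi(y)\,e^{-u e^{-\sigma y}}\,dy,\qquad u\ge 0,$$
with $\phi$ the standard Gaussian density. Because $\bigl||z|-|z-s|\bigr|\le s$, bounding $\sup_z\bigl|\log(p_Z(z)/p_Z(z-s))\bigr|$ reduces to showing that the map $u\mapsto\log g(u)$ is $e^{3\sigma^2/2}$-Lipschitz on $[0,\infty)$.

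Differentiating under the integral, $g'(u)/g(u)=-\ex{Y\sim r_u}{e^{-\sigma Y}}$, where $r_u$ is the probability density proportional to $e^{-\sigma y}\phi(y)\,e^{-u e^{-\sigma y}}$. Completing the square identifies $r_0$ as $N(-\sigma,1)$, and the resulting log-normal first moment evaluates to $\ex{Y\sim r_0}{e^{-\sigma Y}}=e^{3\sigma^2/2}$ --- exactly the constant the lemma demands. For $u>0$, I would set $W=e^{-\sigma Y}$ (under $r_0$) and note that $g(u)$ agrees up to a constant with the Laplace transform $\ex{Y\sim r_0}{e^{-uW}}$; convexity of cumulant generating functions then forces $u\mapsto\log\ex{Y\sim r_0}{e^{-uW}}$ to have nondecreasing derivative, so $\ex{Y\sim r_u}{W}$ is \emph{nonincreasing} in $u$ and hence bounded above by its value $e^{3\sigma^2/2}$ at $u=0$. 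Combined with the trivial $\ex{Y\sim r_u}{W}\ge 0$, this is the required Lipschitz estimate, yielding $\dr{\infty}{Z}{Z+s}\le e^{3\sigma^2/2}s$; the reverse direction is identical (or one invokes $Z\stackrel{d}{=}-Z$).

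The main conceptual step will be the monotonicity observation above: once one recognizes that the worst case of $|(\log g)'|$ occurs at $u=0$ (the mode of $p_Z$) and reduces it to convexity of the CGF, everything else --- differentiating under the integral, the Gaussian square completion identifying $r_0$, and combining the two max-divergence bounds via Proposition~\ref{prop:pure-cdp} --- is routine.
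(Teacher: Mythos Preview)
Your argument is correct. Both your proof and the paper's aim at the same inequality $\left|\tfrac{d}{dz}\log p_Z(z)\right|\le e^{3\sigma^2/2}$ for the density $p_Z$, and then finish identically by invoking $\dr{\alpha}\le\dr{\infty}$ and Proposition~\ref{prop:pure-cdp}. The difference lies in how that pointwise bound is obtained. The paper computes $f$ and $f'$ via two separate Gaussian completions of the square, arriving at
\[
\frac{f'(z)}{f(z)}=-e^{\frac32\sigma^2}\cdot\frac{\ex{Y}{\exp(-ze^{\sigma^2}e^{\sigma Y})\cdot\exp(-z(e^{\sigma^2}-1)e^{\sigma^2}e^{\sigma Y})}}{\ex{Y}{\exp(-ze^{\sigma^2}e^{\sigma Y})}},
\]
and then observes the extra factor in the numerator lies in $(0,1]$. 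You instead recognize $g(u)$ as (a constant times) the Laplace transform $\ex{r_0}{e^{-uW}}$ of the positive variable $W=e^{-\sigma Y}$ under $r_0=N(-\sigma,1)$, so $(\log g)'(u)=-\ex{r_u}{W}$, and convexity of the cumulant generating function forces $\ex{r_u}{W}$ to be nonincreasing in $u$; the bound then falls out of a single evaluation at $u=0$. Your route needs only one square-completion and replaces the second one by a structural observation; the paper's route is a direct computation that makes the factor $e^{3\sigma^2/2}$ appear explicitly without appealing to convexity. Both are clean and yield exactly the same constant.
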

\begin{proof}
Let $Z \leftarrow \LLN{\sigma}$ for some $\sigma>0$.
First we compute the probability density function of $Z$: Let $z>0$. (Note that $Z$ is symmetric about the origin, so $f(-z)=f(z)$.)
\begin{align*}
f(z) 
&= \frac{\mathrm{d}}{\mathrm{d}z} \ex{Y \leftarrow \mathcal{N}(0,1)}{\pr{X \leftarrow \Lap{1}}{X \cdot e^{\sigma Y} \le z}}\\
&= \ex{Y \leftarrow \mathcal{N}(0,1)}{\frac{\mathrm{d}}{\mathrm{d}z} \pr{X \leftarrow \Lap{1}}{X \le z \cdot e^{-\sigma Y}}}\\
&= \ex{Y \leftarrow \mathcal{N}(0,1)}{\frac{1}{2} e^{-\left| z \cdot e^{-\sigma Y}\right|} \cdot e^{-\sigma Y}}\\
&= \int_\mathbb{R} \frac{1}{\sqrt{2\pi}} e^{-y^2/2} \cdot \frac{1}{2} e^{-\left| z \cdot e^{-\sigma y}\right|} \cdot e^{-\sigma y} \mathrm{d}y\\
&= \frac{1}{2\sqrt{2\pi}} \int_\mathbb{R} e^{-y^2/2 - \sigma y} e^{-ze^{-\sigma y}} \mathrm{d}y\\
&= \frac{1}{2\sqrt{2\pi}} \int_\mathbb{R} e^{-(y-\sigma)^2/2 - \sigma (y-\sigma)} e^{-ze^{-\sigma (y-\sigma)}} \mathrm{d}y\\
&= \frac{1}{2\sqrt{2\pi}} \int_\mathbb{R} e^{-y^2/2+\sigma^2/2} e^{-ze^{\sigma^2}e^{-\sigma y}} \mathrm{d}y\\
&= \frac{e^{\sigma^2/2}}{2} \ex{Y \leftarrow \mathcal{N}(0,1)}{\exp\left(-z e^{\sigma^2} e^{\sigma Y}\right)}.
\end{align*}
Next we compute the derivative of the density:
\begin{align*}
f'(z) &= \frac{\mathrm{d}}{\mathrm{d}z}\frac{e^{\sigma^2/2}}{2} \ex{Y \leftarrow \mathcal{N}(0,1)}{\exp\left(-z e^{\sigma^2} e^{\sigma Y}\right)}\\
&= \frac{e^{\sigma^2/2}}{2} \ex{Y \leftarrow \mathcal{N}(0,1)}{\frac{\mathrm{d}}{\mathrm{d}z}\exp\left(-z e^{\sigma^2} e^{\sigma Y}\right)}\\
&= \frac{e^{\sigma^2/2}}{2} \ex{Y \leftarrow \mathcal{N}(0,1)}{\exp\left(-z e^{\sigma^2} e^{\sigma Y}\right)\cdot (-1) e^{\sigma^2} e^{\sigma Y}}\\
&= -e^{\sigma^2}\cdot \frac{e^{\sigma^2/2}}{2\sqrt{2\pi}} \int_\mathbb{R} \exp\left(-\frac{y^2}{2} + \sigma y - z e^{\sigma^2} e^{\sigma y} \right)\mathrm{d}y\\
&= -e^{\sigma^2}\cdot \frac{e^{\sigma^2/2}}{2\sqrt{2\pi}} \int_\mathbb{R} \exp\left(-\frac{(y+\sigma)^2}{2} + \sigma (y+\sigma) - z e^{\sigma^2} e^{\sigma (y+\sigma)} \right)\mathrm{d}y\\
&= -e^{\sigma^2}\cdot \frac{e^{\sigma^2/2}}{2\sqrt{2\pi}} \int_\mathbb{R} \exp\left(-\frac{y^2}{2} + \frac{\sigma^2}{2} - z e^{2\sigma^2} e^{\sigma y} \right)\mathrm{d}y\\
&= - \frac{e^{2\sigma^2}}{2} \ex{Y \leftarrow \mathcal{N}(0,1)}{\exp\left(-ze^{2\sigma^2}e^{\sigma Y}\right)}.
\end{align*}

Now we can bound the derivative of the log density:
\begin{align*}
    \frac{\mathrm{d}}{\mathrm{d}z} \log f(z) &= \frac{f'(z)}{f(z)}\\
    &= \frac{- \frac{e^{2\sigma^2}}{2} \ex{Y \leftarrow \mathcal{N}(0,1)}{\exp\left(-ze^{2\sigma^2}e^{\sigma Y}\right)}}{\frac{e^{\sigma^2/2}}{2} \ex{Y \leftarrow \mathcal{N}(0,1)}{\exp\left(-z e^{\sigma^2} e^{\sigma Y}\right)}}\\
    &= -e^{\frac{3}{2} \sigma^2} \frac{\ex{Y \leftarrow \mathcal{N}(0,1)}{\exp\left(-ze^{\sigma^2}e^{\sigma Y}\right) \cdot \exp\left(-z(e^{\sigma^2}-1)e^{\sigma^2}e^{\sigma Y}\right)}}{\ex{Y \leftarrow \mathcal{N}(0,1)}{\exp\left(-z e^{\sigma^2} e^{\sigma Y}\right)}}\\
    &\in [- e^{\frac{3}{2} \sigma^2},0],
\end{align*}
since $0 < \exp\left(-z(e^{\sigma^2}-1)e^{\sigma^2}e^{\sigma Y}\right) \le 1$.
Thus $|\log(f(z)) -\log(f(z+s))|\le s \cdot e^{\frac32 \sigma^2}$ for all $s,z \in \mathbb{R}$.
Consequently, $\dr{\infty}{Z}{Z+s} \le e^{\frac32 \sigma^2} \cdot s$ for all $s \in \mathbb{R}$. Note that $\dr{\alpha}{Z}{Z+s}\le\dr{\infty}{Z}{Z+s}$ \cite{BunS16}.

By Proposition \ref{prop:pure-cdp}, $\dr{\alpha}{X}{X+s} \le \frac12 e^{3\sigma^2} \cdot s^2$ for all $s \in \mathbb{R}$ and all $\alpha\in(1,\infty)$.
\end{proof}

\begin{proof}[Proof of Theorem \ref{thm:SS-DP-LLN}.]
We use Lemma \ref{lem:cdptriangle} to combine Lemmas \ref{lem:lln_mult} and \ref{lem:lln_add}. Specifically, to bound $\dr{\alpha}{Z}{e^t Z + s} = \dr{\alpha}{Z-s}{e^t Z}$, we use $\dr{\alpha}{Z-s}{Z} = \dr{\alpha}{Z}{Z+s}$ and $\dr{\alpha}{Z}{e^tZ}$. To bound $\dr{\alpha}{e^tZ+s}{Z} = \dr{\alpha}{e^tZ}{Z-s}$, we use $\dr{\alpha}{e^tZ}{Z}=\dr{\alpha}{Z}{e^{-t}Z}$ and $\dr{\alpha}{Z}{Z-s}$.
\end{proof}

\subsubsection{Optimizing Parameters}\label{sec:lln-opt}
Let $f,g:\mathcal{X}^n \to \mathbb{R}$ and $s,t,\sigma > 0$. Suppose, for all neighbouring $x,x'\in\mathcal{X}^n$, we have $$|f(x)-f(x')|\le g(x) ~~~~~\text{ and }~~~~~ e^{-t} g(x) \le g(x') \le e^t g(x).$$
Define a randomized algorithm $M : \mathcal{X}^n \to \mathbb{R}$ by $$M(x) = f(x) + \frac{g(x)}{s} \cdot Z ~~~~~\text{ for }~~~~~ Z \leftarrow \LLN{\sigma}.$$
Then, by Theorem \ref{thm:SS-DP-LLN}, $M$ is $\frac12\varepsilon^2$-CDP for $$\varepsilon = \frac{t}{\sigma} + e^{\frac32 \sigma^2} \cdot s.$$
Namely, we have $$\dr{\alpha}{M(x)}{M(x')} = \dr{\alpha}{ Z}{\frac{f(x')-f(x)}{g(x)} \cdot s + \frac{g(x')}{g(x)} \cdot Z}.$$
We also have $$\ex{}{\left(M(x)-f(x)\right)^2} = \frac{g(x)^2}{s^2} 2 e^{2\sigma^2}.$$
Our goal is to minimize this error for a fixed $\varepsilon$ and $t$ by setting $s$ and $\sigma$. Clearly we should set $s = e^{-\frac32\sigma^2}(\varepsilon-t/\sigma)$ to make the constraint tight. Thus our goal is to pick $\sigma>0$ to minimize $$\ex{}{\left(M(x)-f(x)\right)^2} = \frac{g(x)^2}{e^{-3\sigma^2}(\varepsilon-t/\sigma)^2} 2 e^{2\sigma^2} = \frac{2g(x)^2}{e^{-5\sigma^2}(\varepsilon-t/\sigma)^2}.$$
Note that we need $\varepsilon \sigma > t$.
Now we maximize the denominator:
\begin{align*}
h(\sigma)&=e^{-5\sigma^2}(\varepsilon-t/\sigma)^2\\ h'(\sigma) &= e^{-5\sigma^2}(-10\sigma)(\varepsilon-t/\sigma)^2 + e^{-5\sigma^2}2(\varepsilon-t/\sigma)(+t/\sigma^2)\\
&= -e^{-5\sigma^2} \sigma^{-3} \left(10\sigma^2(\varepsilon\sigma-t) - 2 t\right)(\underbrace{\varepsilon\sigma-t}_{>0})\\
h'(\sigma)=0&\iff 10\sigma^2(\varepsilon\sigma-t) - 2 t = 0\\
&\iff \frac{5\varepsilon}{t} \cdot \sigma^3 - 5\sigma^2-1=0\\
\frac{\mathrm{d}}{\mathrm{d}\sigma}\left(\frac{5\varepsilon}{t}\cdot\sigma^3 - 5\sigma^2-1\right) &= \frac{15\varepsilon}{t}\cdot\sigma^2 - 10\sigma = 5\sigma \left(\frac{3\varepsilon}{t}\cdot\sigma - 2\right)>5\sigma>0.
\end{align*}
The inequality in the underbrace follows from the fact that we restrict ourselves to $\varepsilon\sigma>t$ (as otherwise the problem is infeasible). So to minimize variance, we must pick $\sigma$ such that $ 5\frac{\varepsilon}{t}\sigma^3 - 5\sigma^2-1=0$. Since the derivative of the right hand side of this with respect to $\sigma$ is strictly positive, the cubic equation has exactly one real root.

Substituting $\sigma=t/\varepsilon$ into the cubic yields a strictly negative number. Thus the solution is strictly larger than this (as required). Substituting $\sigma=\max\{2t/\varepsilon, 1/2\}$ yields a strictly positive value for the cubic, yielding an upper bound on the solution. This means we can find the solution numerically using binary search.

\subsection{Uniform Log-Normal}\label{sec:uln}

\newcommand{\ULN}[1]{\mathsf{ULN}(#1)}

\begin{defn}
Let $X$ and $Y$ be independent random variables, where $X$ is uniform on $[-1,1]$ and $Y$ is a standard Gaussian. Let $\sigma>0$ and $Z=X \cdot e^{\sigma Y}$. The distribution of $Z$ is denoted $\ULN{\sigma}$.
\end{defn}
Note that $\ULN{\sigma}$ is a symmetric distribution. It has mean $0$ and variance $\frac13 e^{2\sigma^2}$. More generally, for all $p>0$, $$\ex{Z \leftarrow \ULN{\sigma}}{|Z|^p} = \frac{1}{p+1} e^{\sigma^2 p^2 / 2}. $$

\begin{thm}\label{thm:SS-DP-ULN}
Let $Z \leftarrow \ULN{\sigma}$ with $\sigma\ge\sqrt{2}$ and $s,t\in\mathbb{R}$. Then, for all $\alpha\in(1,\infty)$, $$\left. \begin{array}{c} \dr{\alpha}{Z}{e^t Z+s} \\  \dr{\alpha}{e^t Z+s}{Z}  \end{array} \right\} \le \frac{\alpha}{2} \cdot \left(\frac{|t|}{|\sigma|} + e^{\frac32 \sigma^2} \sqrt{\frac{2}{\pi\sigma^2}} \cdot |s| \right)^2 \le \alpha \cdot \left( \frac{t^2}{\sigma^2} + e^{3\sigma^2} \frac{2}{\pi\sigma^2} s^2\right).$$
\end{thm}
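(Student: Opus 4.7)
My plan is to mirror the proof of Theorem~\ref{thm:SS-DP-LLN} for the Laplace log-normal case, splitting the argument into a multiplicative-shift lemma and an additive-shift lemma, then gluing them via Lemma~\ref{lem:cdptriangle}.

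For the multiplicative shift, the argument is essentially identical to Lemma~\ref{lem:lln_mult}: since $Z = U \cdot e^{\sigma Y}$, the quasi-convexity and post-processing properties of R\'enyi divergence give
\[
\dr{\alpha}{Z}{e^t Z} \le \sup_u \dr{\alpha}{u e^{\sigma Y}}{u e^{\sigma Y + t}} \le \dr{\alpha}{\sigma Y}{\sigma Y + t} = \frac{\alpha t^2}{2\sigma^2},
\]
using only that $U$ is independent of $Y$. The particular choice of $U$ (uniform versus Laplace) never enters this step.

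The real work is the additive-shift bound, which will read $\dr{\infty}{Z}{Z+s} \le e^{3\sigma^2/2}\sqrt{2/(\pi\sigma^2)} \cdot |s|$; converting this to a R\'enyi bound via Proposition~\ref{prop:pure-cdp} then gives $\dr{\alpha}{Z}{Z+s} \le \tfrac12 e^{3\sigma^2} \tfrac{2}{\pi\sigma^2} s^2 \alpha$. I would first compute the density of $Z$ by conditioning on $Y$: for $z>0$, $U e^{\sigma Y}$ exceeds $z$ only when $Y \ge \log(z)/\sigma$, and integrating against the Gaussian density and completing the square yields
\[
f_Z(z) = \tfrac12 e^{\sigma^2/2} \cdot \overline{\Phi}\!\bigl(\log(z)/\sigma + \sigma\bigr),
\]
where $\overline\Phi = 1 - \Phi$. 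Differentiating gives, with $w := \log(z)/\sigma + \sigma$, the compact expression
\[
\frac{f_Z'(z)}{f_Z(z)} = -\frac{1}{\sigma z} \cdot \frac{\varphi(w)}{\overline\Phi(w)} = -\frac{e^{3\sigma^2/2}}{\sigma} \cdot \frac{\varphi(w+\sigma)}{\overline\Phi(w)},
\]
where the second equality uses $z = e^{\sigma w - \sigma^2}$ and one more square-completion. So the task reduces to uniformly bounding $\varphi(w+\sigma)/\overline\Phi(w)$ over $w \in \mathbb{R}$ by $\sqrt{2/\pi}$.

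This Mills-ratio estimate is the step I expect to be the main obstacle, and it is where the hypothesis $\sigma \ge \sqrt{2}$ enters. I would split into two cases. For $w \le 0$, the bound $\overline\Phi(w) \ge 1/2$ together with $\varphi(w+\sigma) \le 1/\sqrt{2\pi}$ yields the desired $\sqrt{2/\pi}$. For $w \ge 0$, write $\overline\Phi(w) = \varphi(w)\int_0^\infty e^{-v^2/2 - vw}\,dv$ and note that the integrand is decreasing on $[0,\sigma]$ when $w \ge 0$, so
\[
\int_0^\infty e^{-v^2/2 - vw}\,dv \;\ge\; \sigma\, e^{-\sigma^2/2 - \sigma w},
\]
which gives $\varphi(w+\sigma)/\overline\Phi(w) \le 1/\sigma$. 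Requiring $1/\sigma \le \sqrt{2/\pi}$, i.e.\ $\sigma \ge \sqrt{\pi/2}$, is comfortably implied by $\sigma \ge \sqrt{2}$. Combining the two cases gives $|(\log f_Z)'(z)| \le e^{3\sigma^2/2}\sqrt{2/(\pi\sigma^2)}$ everywhere, and integrating produces the pure-DP (i.e.\ $\dr\infty$) bound on $Z$ versus $Z+s$.

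Finally, I would invoke Lemma~\ref{lem:cdptriangle} exactly as in the proof of Theorem~\ref{thm:SS-DP-LLN}, decomposing $\dr{\alpha}{Z}{e^t Z + s} = \dr{\alpha}{Z-s}{e^t Z}$ (and symmetrically for the reverse direction) through the intermediate $Z$, with $a = t^2/(2\sigma^2)$ and $b = \tfrac12 e^{3\sigma^2}\tfrac{2}{\pi\sigma^2} s^2$. Then $(\sqrt a + \sqrt b)^2 = \tfrac12\bigl(|t|/\sigma + e^{3\sigma^2/2}\sqrt{2/(\pi\sigma^2)}\,|s|\bigr)^2$, delivering exactly the claimed inequality, and the second (looser) bound in the theorem follows from $(x+y)^2 \le 2x^2 + 2y^2$.
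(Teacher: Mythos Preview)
Your proposal is correct and follows the same overall architecture as the paper: a multiplicative-shift lemma identical to Lemma~\ref{lem:lln_mult}, an additive-shift lemma bounding $|(\log f_Z)'|$ uniformly, and then Lemma~\ref{lem:cdptriangle} to combine. Your density formula and the reduction to bounding $\varphi(w+\sigma)/\overline\Phi(w)$ match the paper's computation exactly (with $w=g(z)=\sigma+\tfrac1\sigma\log z$).

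Where you differ is in the estimate for the $w>0$ region. The paper lower-bounds $\int_a^\infty e^{-y^2/2}\,dy$ via a convexity argument on $[a,a+b]$, obtains a bound depending on a free parameter $b$, sets $b=2$, and then verifies numerically that the resulting inequality holds at $\sigma=\sqrt{2}$ and is monotone thereafter. Your route is more direct: from $\overline\Phi(w)=\varphi(w)\int_0^\infty e^{-v^2/2-vw}\,dv$ and the monotonicity of the integrand on $[0,\sigma]$ you extract $\overline\Phi(w)\ge\sigma\,\varphi(w+\sigma)$, giving $\varphi(w+\sigma)/\overline\Phi(w)\le 1/\sigma$ immediately. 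This is cleaner, avoids the auxiliary parameter and the numerical check, and in fact only needs $\sigma\ge\sqrt{\pi/2}$ rather than $\sigma\ge\sqrt{2}$. The trade-off is minor: the paper's more laborious bound is slightly sharper in principle (it handles the Mills ratio more carefully at moderate $w$), but since both routes land on the same final constant $e^{3\sigma^2/2}\sqrt{2/(\pi\sigma^2)}$, your simplification loses nothing for the theorem as stated.
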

The proof of Theorem \ref{thm:SS-DP-ULN} closely follows that of Theorem \ref{thm:SS-DP-LLN}

\begin{lem}\label{lem:uln_mult}
Let $Z \leftarrow \ULN{\sigma}$ for $\sigma>0$. Let $t \in \mathbb{R}$ and $\alpha\in(1,\infty)$. Then $$\dr{\alpha}{Z}{e^tZ} \le \frac{\alpha t^2}{2\sigma^2}.$$
\end{lem}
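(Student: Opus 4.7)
The structure mirrors the proof of Lemma~\ref{lem:lln_mult} almost verbatim, since the only role the inner random variable $X$ plays there is as a variable to be conditioned on via quasi-convexity; nothing about the proof used the Laplace nature of $X$. Concretely, write $Z = X \cdot e^{\sigma Y}$ with $X$ uniform on $[-1,1]$ and $Y$ a standard Gaussian, so that $e^t Z = X \cdot e^{\sigma Y + t}$ where the same $X$ is shared on both sides.

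First I would apply the quasi-convexity of R\'enyi divergence in the joint law, conditioning on $X = x$, to obtain
\[
\dr{\alpha}{Z}{e^t Z} \;=\; \dr{\alpha}{X e^{\sigma Y}}{X e^{\sigma Y + t}} \;\le\; \sup_{x \in [-1,1]} \dr{\alpha}{x e^{\sigma Y}}{x e^{\sigma Y + t}}.
\]
Next, I would handle the supremum: for $x = 0$ both conditional distributions are a point mass at $0$, giving divergence $0$; for $x \neq 0$, multiplication by $x$ is an invertible map, so by the post-processing (and data-processing) property of R\'enyi divergence,
\[
\dr{\alpha}{x e^{\sigma Y}}{x e^{\sigma Y + t}} \;\le\; \dr{\alpha}{e^{\sigma Y}}{e^{\sigma Y + t}} \;=\; \dr{\alpha}{\sigma Y}{\sigma Y + t},
\]
where the final equality uses that $\exp$ is an invertible post-processing as well.

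Finally, I would plug in the standard closed-form formula for the R\'enyi divergence between two Gaussians with the same variance (the same fact cited as \cite[Lem.~2.4]{BunS16} in the LLN proof), giving $\dr{\alpha}{\sigma Y}{\sigma Y + t} = \alpha t^2/(2\sigma^2)$. Combining the three inequalities yields the claimed bound. There is essentially no obstacle here: the argument is identical to Lemma~\ref{lem:lln_mult}, and the replacement of a Laplace $X$ by a uniform $X$ is absorbed into the quasi-convex supremum.
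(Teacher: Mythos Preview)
Your proposal is correct and matches the paper's own approach: the paper states that the proof is identical to that of Lemma~\ref{lem:lln_mult}, i.e., quasi-convexity in $X$, post-processing to reduce to $\dr{\alpha}{\sigma Y}{\sigma Y + t}$, and the Gaussian R\'enyi divergence formula. Your handling of the $x=0$ case and the invertibility argument for $x\ne 0$ is exactly what is implicit in that proof.
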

The proof is identical to that of Lemma \ref{lem:lln_mult}.

\begin{lem}\label{lem:uln_add}
Let $Z \leftarrow \ULN{\sigma}$ for $\sigma^2\ge2$. Let $s\in \mathbb{R}$ and $\alpha\in(1,\infty)$. Then $$\dr{\alpha}{Z}{Z+s} \le \min\left\{ \frac{e^{3 \sigma^2}}{\pi \sigma^2} s^2 \alpha, \frac{e^{\frac32 \sigma^2}}{\sigma\sqrt{\pi/2}} s \right\}.$$
\end{lem}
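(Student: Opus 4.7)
The plan is to follow the same strategy as in Lemma~\ref{lem:lln_add}: first show a pointwise Lipschitz bound on $\log f$, where $f$ is the density of $Z$; this yields a bound on $\dr{\infty}{Z}{Z+s}$ linear in $|s|$, and then Proposition~\ref{prop:pure-cdp} automatically converts the $D_\infty$ bound into the quadratic R\'enyi bound, giving both terms in the $\min$.

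First I would compute the density $f$ of $Z=Xe^{\sigma Y}$ explicitly. Conditional on $Y=y$, $Z$ is uniform on $[-e^{\sigma y},e^{\sigma y}]$ with density $e^{-\sigma y}/2$ on that interval, so for $z>0$,
\[
f(z)=\int_{\log(z)/\sigma}^{\infty}\frac{e^{-\sigma y}}{2}\phi(y)\,dy=\frac{e^{\sigma^2/2}}{2}\,\Phi^c\!\left(\tfrac{\log z}{\sigma}+\sigma\right),
\]
after completing the square, where $\phi$ is the standard normal density and $\Phi^c$ its upper-tail. Differentiating gives $f'(z)=-\frac{e^{\sigma^2/2}}{2\sigma z}\phi(\tfrac{\log z}{\sigma}+\sigma)$. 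With the substitution $u=\tfrac{\log z}{\sigma}+\sigma$, so $\sigma z=\sigma e^{\sigma u-\sigma^2}$, the log-derivative simplifies to
\[
\left|\frac{d}{dz}\log f(z)\right|=\frac{1}{\sigma z}\cdot\frac{\phi(u)}{\Phi^c(u)}=\frac{e^{3\sigma^2/2}}{\sigma}\cdot\frac{\phi(u+\sigma)}{\Phi^c(u)},
\]
using $e^{-\sigma u-\sigma^2/2}\phi(u)=\phi(u+\sigma)$.

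The crux is therefore showing $\phi(u+\sigma)/\Phi^c(u)\le\sqrt{2/\pi}$ uniformly in $u\in\mathbb{R}$, for every $\sigma\ge\sqrt{2}$. This I would handle in two cases. For $u\le 0$, the naive bounds $\Phi^c(u)\ge 1/2$ and $\phi(u+\sigma)\le\phi(0)=1/\sqrt{2\pi}$ already give the desired $\sqrt{2/\pi}$. For $u>0$, I would invoke a Mills-type lower bound such as $\Phi^c(u)\ge\phi(u)\cdot\frac{2}{\sqrt{u^2+4}+u}$, which gives
\[
\frac{\phi(u+\sigma)}{\Phi^c(u)}\le\frac{\sqrt{u^2+4}+u}{2}\,e^{-\sigma u-\sigma^2/2}.
\]
Differentiating this upper bound in $u$ produces a factor $\bigl(\tfrac{1}{\sqrt{u^2+4}}-\sigma\bigr)$, which is strictly negative whenever $\sigma\ge 1/2$, so the bound is maximized at $u=0$, taking the value $e^{-\sigma^2/2}$. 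For $\sigma\ge\sqrt{2}$ this is at most $e^{-1}<\sqrt{2/\pi}$, completing the case analysis. I expect the Mills-ratio step to be the main obstacle, since Gordon's sharper bound $\Phi^c(u)\ge\frac{u}{u^2+1}\phi(u)$ blows up as $u\to 0^+$; the $\frac{2}{\sqrt{u^2+4}+u}$ variant (or equivalent) is needed to get a uniform bound down to $u=0$.

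Combining the two cases gives $\bigl|\tfrac{d}{dz}\log f(z)\bigr|\le\frac{e^{3\sigma^2/2}}{\sigma\sqrt{\pi/2}}$ for $z>0$; by the symmetry $f(-z)=f(z)$ the same bound holds for $z<0$, and one checks via the $e^{-\log^2 z/(2\sigma^2)}$ factor that the log-derivative is continuous through $z=0$. Integrating along a line segment yields $\bigl|\log f(z)-\log f(z+s)\bigr|\le \frac{e^{3\sigma^2/2}}{\sigma\sqrt{\pi/2}}\,|s|$, hence $\dr{\infty}{Z}{Z+s}\le \frac{e^{3\sigma^2/2}}{\sigma\sqrt{\pi/2}}\,|s|$. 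Since $\dr{\alpha}{\cdot}{\cdot}\le\dr{\infty}{\cdot}{\cdot}$, this already gives the linear-in-$|s|$ term in the min; applying Proposition~\ref{prop:pure-cdp} squares this bound and multiplies by $\alpha/2$, producing $\dr{\alpha}{Z}{Z+s}\le\frac{e^{3\sigma^2}}{\pi\sigma^2}s^2\alpha$, which is the quadratic-in-$s$ term. Taking the minimum of the two bounds finishes the proof.
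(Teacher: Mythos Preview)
Your proof is correct and follows the same overall strategy as the paper: compute the density, bound $|(\log f_Z)'|$ uniformly by $e^{3\sigma^2/2}/(\sigma\sqrt{\pi/2})$, and then invoke Proposition~\ref{prop:pure-cdp}. The case split at $u=0$ (equivalently $z=e^{-\sigma^2}$) is also the same. Where you differ is in the execution of each case. For $u\le 0$ the paper first locates the maximum of $\tfrac{1}{z}e^{-g(z)^2/2}$ by a second-derivative computation, whereas your rewriting $\tfrac{1}{\sigma z}\phi(u)=\tfrac{e^{3\sigma^2/2}}{\sigma}\phi(u+\sigma)$ makes the bound $\phi(u+\sigma)\le\phi(0)$ immediate. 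For $u>0$ the paper derives from scratch a lower bound $\int_a^\infty e^{-y^2/2}\,dy\ge e^{-a^2/2}\tfrac{1-e^{-(a+b/2)b}}{a+b/2}$, sets $b=2$, and finishes with a numerical verification at $\sigma=\sqrt{2}$; you instead invoke the Birnbaum inequality $\Phi^c(u)\ge \tfrac{2}{u+\sqrt{u^2+4}}\,\phi(u)$ and reduce to a one-line monotonicity check. Your route is shorter and avoids the numerical verification, at the cost of citing an external Mills-ratio bound; the paper's argument is entirely self-contained.
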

\begin{proof}
We compute the probability density $f_Z$ of $Z \leftarrow \ULN{\sigma}$. Since the distribution is symmetric we can restrict our attention to $z>0$.
\begin{align*}
    f_Z(z) &= \frac{\mathrm{d}}{\mathrm{d}z} \pr{}{Z\le z}\\
    &= \frac{\mathrm{d}}{\mathrm{d}z} \ex{Y}{\pr{X}{X \cdot e^{\sigma Y}\le z }}\\
    &= \ex{Y}{\frac{\mathrm{d}}{\mathrm{d}z} \pr{X}{X\le z e^{-\sigma Y}}}\\
    &= \ex{Y}{\frac12\mathbb{I}[z e^{-\sigma Y} \le 1]\cdot e^{-\sigma Y} }\\
    &= \frac{1}{\sqrt{2\pi}} \int_{-\infty}^\infty \frac12\mathbb{I}[z e^{-\sigma y} \le 1]\cdot e^{-\sigma y - y^2/2} \mathrm{d}y\\
    &= \frac{1}{\sqrt{2\pi}} \int_{-\infty}^\infty \frac12\mathbb{I}[z e^{-\sigma (y-\sigma)} \le 1]\cdot e^{-\sigma (y-\sigma) - (y-\sigma)^2/2} \mathrm{d}y\\
    &= \frac{1}{\sqrt{2\pi}} \int_{-\infty}^\infty \frac12\mathbb{I}\left[y \ge \sigma + \frac{1}{\sigma} \log z\right]\cdot e^{\sigma^2/2 - y^2/2} \mathrm{d}y\\
    &= \frac{e^{\sigma^2/2}}{2\sqrt{2\pi}} \int_{\sigma + \frac{1}{\sigma} \log z}^\infty  e^{ - y^2/2} \mathrm{d}y\\    
    &= \frac{e^{\sigma^2/2}}{2\sqrt{2\pi}} \int_{g(z)}^\infty  e^{ - y^2/2} \mathrm{d}y\\
    &= \frac{e^{\sigma^2/2}}{2} \pr{Y \leftarrow \mathcal{N}(0,1)}{Y \ge g(z)},
\end{align*}
where $g(z) := \sigma + \frac{1}{\sigma} \log z$.
We can also calculate the derivative of the density from the fundamental theorem of calculus:
\begin{align*}
    f_Z'(z) &= \frac{\mathrm{d}}{\mathrm{d}z} \frac{e^{\sigma^2/2}}{2\sqrt{2\pi}} \int_{g(z)}^\infty  e^{ - y^2/2} \mathrm{d}y\\
    &= - \frac{e^{\sigma^2/2}}{2\sqrt{2\pi}} e^{-g(z)^2/2} \cdot g'(z)\\
    &= - \frac{e^{\sigma^2/2}}{2\sqrt{2\pi}} e^{-g(z)^2/2} \cdot \frac{1}{\sigma z}.
\end{align*}
Clearly $f_Z'(z)<0$ (for $z>0$). This shows that the distribution is unimodal.
We also have a second derivative:
\begin{align*}
    f_Z''(z) &= \frac{\mathrm{d}}{\mathrm{d}z} \left( - \frac{e^{\sigma^2/2}}{2\sqrt{2\pi}} e^{-g(z)^2/2} \cdot \frac{1}{\sigma z}\right)\\
    &=  - \frac{e^{\sigma^2/2}}{2\sqrt{2\pi}} e^{-g(z)^2/2} \left( -g(z)g'(z) \cdot \frac{1}{\sigma z} - \frac{1}{\sigma z^2} \right)\\
    &=  \frac{e^{\sigma^2/2}}{2\sqrt{2\pi}\sigma} \frac{e^{-g(z)^2/2}}{z^2} \left( \frac{\log z}{\sigma^2} + 2 \right).
\end{align*}
The second derivative is negative if $z<e^{-2\sigma^2}$ and positive if $z> e^{-2\sigma^2}$. Noting that the first derivative is always negative, this shows that the maximum magnitude of the first derivative is attained at $z=e^{-2\sigma^2}$. That is, for all $z>0$, we have $$-\frac{e^{2\sigma^2}}{2\sqrt{2\pi}\sigma}\le f_Z'(z) \le 0.$$

Our goal is to bound $$\frac{\mathrm{d}}{\mathrm{d}z} \log f_Z(z) = \frac{f_Z'(z)}{f_Z(z)} = \frac{-1}{\sigma} \cdot \frac{\frac{1}{z} e^{-g(z)^2/2} }{ \int_{g(z)}^\infty  e^{ - y^2/2} \mathrm{d}y}.$$

We already have a uniform upper bound on the numerator: for all $z>0$, $$0 \le \frac{1}{z} e^{-g(z)^2/2} \le e^{\frac32\sigma^2}.$$

Next we prove lower bounds on the denominator. Firstly, for any $a \le 0$, $$\int_a^\infty e^{-y^2/2} \mathrm{d}y \ge \int_0^\infty e^{-y^2/2} \mathrm{d}y = \sqrt{\frac{\pi}{2}}.$$
Thus, for $z\le e^{-\sigma^2}$, we have $g(z)\le0$ and $$\left|\frac{\mathrm{d}}{\mathrm{d}z} \log f_Z(z)\right| \le \frac{e^{\frac32 \sigma^2}}{\sigma\sqrt{\pi/2}} .$$
For $a,b \ge 0$,
\begin{align*}
    \int_a^\infty e^{-y^2/2} \mathrm{d}y &\ge \int_a^{a+b} e^{-y^2/2} \mathrm{d}y\\
    &= \int_0^b e^{-(y+a)^2/2} \mathrm{d}y\\
    &\ge \int_0^b \exp\left(\frac{-1}{2} \left( \frac{y}{b} \cdot (a+b)^2 + (1-\frac{y}{b}) a^2 \right)\right) \mathrm{d}y\\    
    &= \int_0^b \exp\left(\frac{-1}{2} \left(a^2+ \frac{y}{b} (2ab+b^2) \right)\right) \mathrm{d}y\\
    &= e^{-a^2/2} \int_0^b e^{-(a+b/2)y}\mathrm{d}y\\
    &= e^{-a^2/2} \int_0^b \left(\frac{\mathrm{d}}{\mathrm{d}y} \frac{e^{-(a+b/2)y}}{-(a+b/2)}\right)\mathrm{d}y\\
    &= e^{-a^2/2} \cdot \frac{1-e^{-(a+b/2)b}}{a+b/2},
\end{align*}
where the second inequality uses the fact that $x \mapsto -x^2$ is concave.
Hence, for any $z> e^{-\sigma^2}$ and $b>0$, we have $g(z)>0$ and \begin{align*}
    \left|\frac{\mathrm{d}}{\mathrm{d}z} \log f_Z(z)\right| &= \frac{1}{\sigma} \cdot \frac{\frac{1}{z} e^{-g(z)^2/2} }{ \int_{g(z)}^\infty  e^{ - y^2/2} \mathrm{d}y}\\
    &\le \frac{1}{\sigma} \cdot \frac{\frac{1}{z} e^{-g(z)^2/2} }{e^{-g(z)^2/2} \cdot \frac{1-e^{-(g(z)+b/2)b}}{g(z)+b/2}}\\
    &= \frac{1}{\sigma} \cdot \frac{g(z)+b/2 }{z}\frac{1}{1-e^{-(g(z)+b/2)b}}\\    
    &= \left(\frac{1+b/2\sigma}{z} + \frac{\log z}{\sigma^2 z}\right)\frac{1}{1-e^{-(g(z)+b/2)b}}\\
    &\le \left(\frac{1+b/2\sigma}{e^{-\sigma^2}} + \frac{1}{\sigma^2 e}\right)\frac{1}{1-e^{-b^2/2}},
\end{align*}
since $\log z \le z/e$. It only remains to set $b$. Setting $b=2$ gives, for $z> e^{-\sigma^2}$ and $\sigma\ge\sqrt{2}$, $$\left|\frac{\mathrm{d}}{\mathrm{d}z} \log f_Z(z)\right| \le \left(\frac{1+1/\sigma}{e^{-\sigma^2}} + \frac{1}{\sigma^2 e}\right)\frac{1}{1-e^{-2}} \le \frac{e^{\frac32 \sigma^2}}{\sigma\sqrt{\pi/2}}.$$
To justify the final inequality above, note that it is equivalent to $$\frac{\sqrt{\pi/2}}{1+e^{-2}} \left(\frac{1+\sigma}{e^{\sigma^2/2}} +\frac{1}{\sigma e^{1+\sigma^2/2}}\right) \le 1,$$ which can be verified numerically to hold for $\sigma=\sqrt{2}$. It is also easy to show that the left hand side is a decreasing function of $\sigma$ for $\sigma \ge 1$, which establishes it for all $\sigma\ge\sqrt{2}$.
This bound on $\left|\frac{\mathrm{d}}{\mathrm{d}z} \log f_Z(z)\right|$ entails a pure differential privacy guarantee for additive distortion, which by Proposition \ref{prop:pure-cdp} gives the concentrated differential privacy guarantee.
\end{proof}

\subsection{Arsinh-Normal}\label{sec:arsinhn}

\begin{thm}\label{thm:SS-DP-ASN}
Let $Y$ be a standard Gaussian and $\sigma>0$. Let $X= \frac{1}{\sigma} \sinh(\sigma Y)$. Let $s,t \in \mathbb{R}$. Then, for all $\alpha\in(1,\infty)$, $$\left. \begin{array}{c} \dr{\alpha}{X}{e^t X+s} \\  \dr{\alpha}{e^t X+s}{X}  \end{array} \right\} \le \frac{\alpha}{2} \cdot \left(\sqrt{|t| \cdot \left(\frac{|t|}{\sigma^2} + \frac{1}{\sigma} + 2\right)}+|s| \cdot \left(\frac{2}{3\sigma}+\frac{\sigma}{2}\right)\right)^2.$$
\end{thm}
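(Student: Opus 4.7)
The plan is to follow the same template as Theorems~\ref{thm:SS-DP-LLN} and~\ref{thm:SS-DP-ULN}: prove a multiplicative bound $\dr{\alpha}{X}{e^tX} \le \frac{\alpha|t|}{2}\bigl(|t|/\sigma^2 + 1/\sigma + 2\bigr)$ and an additive bound $\dr{\alpha}{X}{X+s} \le \frac{\alpha s^2}{2}\bigl(\frac{2}{3\sigma}+\frac{\sigma}{2}\bigr)^2$, then combine them via Lemma~\ref{lem:cdptriangle}, using $\dr{\alpha}{X}{e^tX+s} = \dr{\alpha}{X-s}{e^tX}$ and the analogous identity in the reverse direction, exactly as in the final step of the proof of Theorem~\ref{thm:SS-DP-LLN}.

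For the additive bound, I would compute the density of $X$ via the change of variables from $Y$: the Jacobian $dY/dX = 1/\sqrt{1+\sigma^2 x^2}$ gives $\log f_X(x) = -\tfrac12\log(2\pi) - \frac{\arsinh^2(\sigma x)}{2\sigma^2} - \tfrac12\log(1+\sigma^2 x^2)$. Differentiating and substituting $u = \arsinh(\sigma x)$ (so $\sinh u = \sigma x$, $\cosh u = \sqrt{1+\sigma^2 x^2}$) yields $(\log f_X)'(x) = -\frac{u}{\sigma\cosh u} - \frac{\sigma\sinh u}{\cosh^2 u}$. Elementary one-variable calculus shows $\sup_u |u|/\cosh u \le 2/3$ and $\sup_u |\sinh u|/\cosh^2 u = 1/2$ (the latter attained at $\sinh u = 1$), so $|(\log f_X)'| \le \frac{2}{3\sigma}+\frac{\sigma}{2}$ uniformly. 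Integrating gives the pure-DP bound $\dr{\infty}{X}{X+s} \le |s|\bigl(\frac{2}{3\sigma}+\frac{\sigma}{2}\bigr)$, and Proposition~\ref{prop:pure-cdp} upgrades it to the stated CDP additive bound.

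For the multiplicative bound, I would introduce a Gaussian-shift intermediate $R$, the distribution of $g(Y+t/\sigma)$ where $Y\sim N(0,1)$ and $g(y)=\sinh(\sigma y)/\sigma$. Since $R$ is the image of $N(t/\sigma,1)$ under the monotone post-processing $g$, data processing gives $\dr{\alpha}{X}{R} \le \dr{\alpha}{N(0,1)}{N(t/\sigma,1)} = \frac{\alpha t^2}{2\sigma^2}$. The remaining task is to bound $\dr{\alpha}{R}{e^tX}$. Using the sinh addition formula, $R = e^tX + \sinh(t)e^{-\sigma Y}/\sigma$ as functions of the shared $Y$; computing both densities explicitly and substituting $u = \arsinh(\sigma x)$, $w = \arsinh(e^{-t}\sinh u)$ reduces the log-density ratio to $\frac{w^2-(u-t)^2}{2\sigma^2} + t - \log\frac{\cosh u}{\cosh w}$, which I would uniformly upper bound in $u$ by a quantity of order $|t|(1/\sigma+2)/2$ via direct analysis of its critical point. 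Combining this one-sided $\dr{\infty}$ estimate with the Gaussian-shift quadratic bound via Lemma~\ref{lem:RenyiTriangle} (in the limit $\gamma\to\infty$, where the $\gamma/(\gamma-1)$ factor tends to $1$) produces the desired multiplicative bound.

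The main obstacle is precisely this uniform upper bound on $\log(f_R/f_{e^tX})$. Unlike the LLN and ULN cases, where the noise factors as the product $X\cdot e^{\sigma Y}$ of an independent variable and a log-normal term and hence multiplicative scaling reduces to a Gaussian shift by a single data-processing step, the arsinh-normal admits no such factorization; $R$ and $e^tX$ are distinct nonlinear deterministic functions of the shared $Y$. Moreover the log-density ratio $\log(f_R/f_{e^tX})$ diverges to $-\infty$ as $x\to-\infty$ (for $t>0$), so only a one-sided pure-DP bound is available and Proposition~\ref{prop:pure-cdp} cannot be applied symmetrically. This is why the final multiplicative bound scales as $O(\alpha|t|)$ rather than the $O(\alpha t^2)$ one would obtain from two Gaussian-shift-style bounds, producing the $1/\sigma+2$ terms alongside the expected $|t|/\sigma^2$ Gaussian contribution.
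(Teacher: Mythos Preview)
Your additive bound is correct and essentially identical to the paper's: compute the density via change of variables, bound $|(\log f_X)'|$ by $\frac{2}{3\sigma}+\frac{\sigma}{2}$, and invoke Proposition~\ref{prop:pure-cdp}. The final combination via Lemma~\ref{lem:cdptriangle} is also the same.

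The gap is in the multiplicative step. Your plan hinges on a one-sided bound $\dr{\infty}{R}{e^tX}<\infty$ with $R=\phi(Y+t/\sigma)$, $\phi(y)=\sinh(\sigma y)/\sigma$. In the $Y$-coordinate this log-ratio equals
\[
\frac{w^2-(v-t)^2}{2\sigma^2}+t+\log\frac{\cosh w}{\cosh v},\qquad v=\sigma y,\ w=\arsinh(e^{-t}\sinh v).
\]
You correctly note that for $t>0$ this diverges to $-\infty$ as $v\to-\infty$, so only the upper tail of the ratio is controlled. But the theorem also requires $\dr{\alpha}{e^tX}{X}$, which by scale invariance equals $\dr{\alpha}{X}{e^{-t}X}$; running your argument with $t$ replaced by $-t<0$ one finds $w\approx v+t$ as $v\to -\infty$ and hence $w^2-(v-t)^2\approx -4v|t|\to+\infty$, so $\dr{\infty}{R}{e^{-t}X}=+\infty$. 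No reshuffling of the triangle inequality fixes this: any intermediate obtained from a Gaussian shift of $Y$ leaves a log-density ratio against $e^{\pm t}X$ that grows linearly in $|y|$ on one tail, so the $\infty$-divergence is infinite in one direction or the other. Thus your route, as written, covers only half of the required cases.

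The paper's argument avoids $\dr{\infty}$ entirely for the dilation. It writes $\dr{\alpha}{X}{e^tX}=\dr{\alpha}{Y}{g^{-1}(Y)}$ with $g(y)=\tfrac1\sigma\arsinh(e^{-t}\sinh(\sigma y))$, separates the Jacobian factor (uniformly bounded by $e^{\max\{0,t\}}$) from the Gaussian-exponent factor, and then uses the Lipschitz estimate $|g(y)-y|\le|t|/\sigma$ (obtained by differentiating in $t$) to bound $g(y)^2-y^2\le t^2/\sigma^2+\tfrac{2}{\sigma}\max\{0,-ty\}$. This upper bound is \emph{linear} in $y$ on one side---precisely why your $\dr{\infty}$ is infinite---but linearity is harmless once plugged into $\ex{Y\sim N(0,1)}{e^{\frac{\alpha-1}{2}(g(Y)^2-Y^2)}}$: it becomes a one-sided Gaussian MGF, which the paper evaluates to obtain the extra $|t|/(2\sigma)$ term. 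That integral computation is the genuinely new ingredient your proposal is missing.
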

We can calculate that $\ex{Y \leftarrow N(0,1)}{\left(\frac{1}{\sigma^2} \sinh(\sigma Y)\right)^2} = \frac{e^{2\sigma^2}-1}{2\sigma^2}$.
In particular, setting $\sigma=2/\sqrt{3}$ and requiring $|t|\le 1/2$, yields $$\left. \begin{array}{c} \dr{\alpha}{X}{e^t X+s} \\  \dr{\alpha}{e^t X+s}{X}  \end{array} \right\} \le \frac{\alpha}{2} \cdot \left(1.81\sqrt{|t|}+1.16|s|\right)^2$$ and $\ex{}{X^2} \le 5.03$.
\begin{lem}
Let $Y$ be a standard Gaussian and $\sigma>0$. Let $X= \frac{1}{\sigma} \sinh(\sigma Y)$. Then the density of $X$ is given by $$f_X(x)= \frac{1}{\sqrt{2\pi}} e^{-\left(\arsinh(\sigma x)\right)^2/2\sigma^2} \cdot \frac{1}{\sqrt{1+(\sigma x)^2}}.$$
\end{lem}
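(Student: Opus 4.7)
The statement is a standard change-of-variables calculation, so the plan is essentially to apply the one-dimensional transformation formula for densities and simplify.

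First I would observe that the map $h(y) = \frac{1}{\sigma}\sinh(\sigma y)$ is smooth and strictly increasing on $\mathbb{R}$ (since $\sinh$ is strictly increasing and $\sigma>0$), hence a bijection $\mathbb{R}\to\mathbb{R}$. Its inverse is $h^{-1}(x) = \frac{1}{\sigma}\arsinh(\sigma x)$, which is well-defined on all of $\mathbb{R}$.

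Next I would invoke the change-of-variables formula: since $X = h(Y)$ with $h$ a monotone $C^1$ bijection,
\[
f_X(x) \;=\; f_Y\!\bigl(h^{-1}(x)\bigr)\cdot \bigl|(h^{-1})'(x)\bigr|.
\]
Plugging in $f_Y(y) = \frac{1}{\sqrt{2\pi}} e^{-y^2/2}$ gives the factor $\frac{1}{\sqrt{2\pi}}\exp\!\bigl(-\tfrac{1}{2}\bigl(\tfrac{1}{\sigma}\arsinh(\sigma x)\bigr)^{2}\bigr) = \frac{1}{\sqrt{2\pi}}e^{-(\arsinh(\sigma x))^2/2\sigma^2}$, which is exactly the first factor in the claimed density.

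For the Jacobian factor, I would use the standard identity $\frac{d}{du}\arsinh(u) = \frac{1}{\sqrt{1+u^2}}$ (which follows, e.g., from differentiating $\sinh(\arsinh u) = u$ and using $\cosh^2 = 1+\sinh^2$) and the chain rule to get
\[
(h^{-1})'(x) \;=\; \frac{1}{\sigma}\cdot\frac{\sigma}{\sqrt{1+(\sigma x)^2}} \;=\; \frac{1}{\sqrt{1+(\sigma x)^2}},
\]
which is positive, so $|(h^{-1})'(x)|$ equals the same expression. Multiplying the two factors produces the claimed formula for $f_X$.

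There is no real obstacle here — the result is purely computational once one recognizes it as a transformation of a standard Gaussian through a monotone map. The only minor bookkeeping item is verifying the derivative of $\arsinh$, which I would include in a single line for completeness.
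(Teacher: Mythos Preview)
Your proposal is correct and is essentially identical to the paper's own argument: the paper simply invokes the change-of-variables lemma $f_X(x) = f_Y(g(x))\cdot g'(x)$ with $g(x) = \tfrac{1}{\sigma}\arsinh(\sigma x)$, which is exactly your computation with $g = h^{-1}$.
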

This follows from the change-of-variables lemma
\begin{lem}\label{lem:density-transform}
Let $Y$ be a distribution on $\mathbb{R}$ with density $f_Y$. Let $g: \mathbb{R}\to\mathbb{R}$ be an increasing and differentiable function. Let $X=g^{-1}(Y)$. Then the density of $X$ is given by $$f_X(x)=f_Y(g(x)) \cdot g'(x).$$
\end{lem}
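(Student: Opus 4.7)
The plan is to prove the change-of-variables formula by the standard CDF route, since this is a one-dimensional monotone transformation. Specifically, I will first compute the cumulative distribution function of $X$ in terms of the cumulative distribution function of $Y$, and then differentiate.

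First, since $g$ is increasing (and therefore invertible onto its image), so is $g^{-1}$. Hence for any $x \in \mathbb{R}$, the event $\{g^{-1}(Y) \le x\}$ is equivalent to the event $\{Y \le g(x)\}$. Denoting the CDFs of $X$ and $Y$ by $F_X$ and $F_Y$ respectively, this gives
\[
F_X(x) = \pr{}{X \le x} = \pr{}{g^{-1}(Y) \le x} = \pr{}{Y \le g(x)} = F_Y(g(x)).
\]

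Next, because $f_Y$ is the density of $Y$, the CDF $F_Y$ is differentiable almost everywhere with $F_Y' = f_Y$, and by hypothesis $g$ is differentiable. Applying the chain rule to the identity $F_X(x) = F_Y(g(x))$ then yields
\[
f_X(x) = \frac{\mathrm{d}}{\mathrm{d}x} F_X(x) = F_Y'(g(x)) \cdot g'(x) = f_Y(g(x)) \cdot g'(x),
\]
which is the claimed formula.

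The result is standard and there is no serious obstacle; the only subtlety is the implicit regularity assumption that $f_Y$ is a genuine density so that the fundamental theorem of calculus applies at the relevant point, and that $g$ is strictly increasing so that $g^{-1}$ is well defined on the range of $Y$. Both are granted by the hypotheses. This lemma will then be applied to $Y \leftarrow N(0,1)$ and $g(x)=\frac{1}{\sigma}\sinh(\sigma x)$ (which is increasing and differentiable with $g'(x)=\cosh(\sigma x) = \sqrt{1+\sinh^2(\sigma x)} = \sqrt{1+(\sigma\cdot g(x))^2}$, and inverse $g^{-1}(y)=\frac{1}{\sigma}\arsinh(\sigma y)$) to obtain the closed form for the density of the arsinh-normal random variable stated just above.
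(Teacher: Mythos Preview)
Your proof is correct and follows exactly the approach the paper has in mind: the paper states this lemma without proof (it is the standard change-of-variables formula), and the commented-out derivation in the source computes $f_X(x)=\frac{\mathrm{d}}{\mathrm{d}x}\pr{}{Y\le g(x)}=f_Y(g(x))\,g'(x)$, which is precisely your CDF-then-chain-rule argument.
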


\begin{lem}
Let $Y$ be a standard Gaussian and $\sigma>0$. Let $X= \frac{1}{\sigma} \sinh(\sigma Y)$. Then $$\dr{\infty}{X}{X+s} \le |s| \cdot\left(\frac{2}{3\sigma} + \frac{\sigma}{2}\right)$$ for all $s \in \mathbb{R}$.
\end{lem}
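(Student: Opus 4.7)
The plan is to reduce this pure R\'enyi bound to a uniform bound on $\left|\frac{\mathrm d}{\mathrm dx} \log f_X(x)\right|$ and then invoke the mean value theorem. Since $X+s$ has density $x\mapsto f_X(x-s)$, we have $\dr{\infty}{X}{X+s} = \sup_x \log(f_X(x)/f_X(x-s))$; if we can show $|(\log f_X)'(x)| \le C$ for all $x$, then the fundamental theorem of calculus gives $|\log f_X(x) - \log f_X(x-s)| \le C|s|$, and taking $C = \frac{2}{3\sigma} + \frac{\sigma}{2}$ delivers the lemma.

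To compute the derivative, I would start from the density expression supplied by the preceding lemma,
\[
\log f_X(x) = -\tfrac12\log(2\pi) - \frac{\arsinh(\sigma x)^2}{2\sigma^2} - \tfrac12\log(1+\sigma^2 x^2),
\]
differentiate using $\frac{\mathrm d}{\mathrm dx}\arsinh(\sigma x) = \sigma/\sqrt{1+\sigma^2 x^2}$, and substitute $u=\sigma x$ to obtain
\[
\frac{\mathrm d}{\mathrm dx}\log f_X(x) = -\frac{1}{\sigma}\cdot\frac{\arsinh(u)}{\sqrt{1+u^2}} \;-\; \sigma\cdot \frac{u}{1+u^2}.
\]
By the triangle inequality, it then suffices to prove the two scalar inequalities $\frac{|u|}{1+u^2}\le\frac12$ (immediate from $1+u^2\ge 2|u|$) and $\frac{|\arsinh(u)|}{\sqrt{1+u^2}}\le \frac23$.

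The second inequality is the main obstacle, because the bound is essentially tight. Substituting $v=\arsinh(u)$ (so $\sqrt{1+u^2}=\cosh v$) turns it into $\sup_v |v|/\cosh v \le 2/3$. A direct critical-point calculation shows the maximum is attained where $v\tanh v = 1$, at which $\cosh v = v/\sqrt{v^2-1}$ and so the maximum value equals $\sqrt{v_*^2-1}$. To show this is at most $2/3$, I would bound $v_*$: since $v\mapsto v\tanh v$ is increasing on $(0,\infty)$, it suffices to verify that $\tfrac{\sqrt{13}}{3}\cdot\tanh(\tfrac{\sqrt{13}}{3}) \ge 1$, which forces $v_* \le \sqrt{13}/3$ and hence $\sqrt{v_*^2-1}\le 2/3$. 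This last verification is a one-line numerical check (the product evaluates to roughly $1.002$), but because the numerical slack is only about $0.004$, any attempt at a purely symbolic, elementary proof must be chosen with care; cruder bounds such as $\cosh v\ge 1+v^2/2$ give only $\sup v/\cosh v \le 1/\sqrt{2}$, which is too weak. Combining the two inequalities then yields $|(\log f_X)'(x)|\le \tfrac{1}{\sigma}\cdot\tfrac23 + \sigma\cdot\tfrac12 = \tfrac{2}{3\sigma}+\tfrac{\sigma}{2}$, completing the proof via the MVT argument of the first paragraph.
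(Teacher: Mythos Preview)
Your proposal is correct and follows the same approach as the paper: bound $\left|\frac{\mathrm d}{\mathrm dx}\log f_X(x)\right|$ by $\frac{2}{3\sigma}+\frac{\sigma}{2}$ via the same substitution $u=\sigma x$ and the same two scalar inequalities. The only difference is that the paper simply asserts $\left|\frac{\arsinh(u)}{\sqrt{1+u^2}}\right|\le\frac23$ without justification, whereas you actually supply a proof of it (via the substitution $v=\arsinh u$ and the critical-point analysis of $v/\cosh v$); your verification that $\tfrac{\sqrt{13}}{3}\tanh\tfrac{\sqrt{13}}{3}\ge 1$ is correct and yields $v_*\le\sqrt{13}/3$, hence $\sqrt{v_*^2-1}\le\frac23$.
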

Note that setting $\sigma = 2/\sqrt{3} \approx 1.15$ yields $\dr{\infty}{X}{X+s} \le |s| \cdot \frac{2}{\sqrt{3}}$ for all $s \in \mathbb{R}$.
\begin{proof}
To prove this we simply need to show that $$\left|\frac{\mathrm{d}}{\mathrm{d}x} \log(f_X(x)) \right| = \left|\frac{f_X'(x)}{f_X(x)}\right|\le \frac{2}{3\sigma} + \frac{\sigma}{2}$$ for all $x \in \mathbb{R}$, where $f_X$ is the density of $X$ given by $$f_X(x)= \frac{1}{\sqrt{2\pi}} e^{-\left(\arsinh(\sigma x)\right)^2/2\sigma^2} \cdot \frac{1}{\sqrt{1+(\sigma x)^2}}.$$ 
We have 
\begin{align*}
    f_X'(x) &= \frac{1}{\sqrt{2\pi}} e^{-(\arsinh(\sigma x))^2/2\sigma^2} \cdot \frac{-\arsinh(\sigma x)}{\sigma^2} \cdot \frac{\sigma}{\sqrt{1+(\sigma x)^2}}\cdot \frac{1}{\sqrt{1+(\sigma x)^2}} \\&~~~+ \frac{1}{\sqrt{2\pi}} e^{-(\arsinh(\sigma x))^2/2\sigma^2} \cdot \frac{-1}{2(1+(\sigma x)^2)^{3/2}} \cdot 2\sigma^2 x\\
    &= -f_X(x) \cdot \left( \frac{\arsinh(\sigma x)}{\sigma \sqrt{1+(\sigma x)^2}} + \frac{\sigma^2 x}{1+(\sigma x)^2}\right)\\
    &= -f_X(x) \cdot \left(\frac{1}{\sigma} \cdot \frac{\arsinh(u)}{\sqrt{1+u^2}}+\sigma\cdot\frac{u}{1+u^2}\right),
\end{align*}
where $u=\sigma x$. We have $\left|\frac{u}{1+u^2}\right| \le \frac12$  and $\left|\frac{\arsinh(u)}{\sqrt{1+u^2}}\right| \le \frac23$ for all $u \in \mathbb{R}$. This yields the result.
\end{proof}

\begin{lem}
Let $Y$ be a standard Gaussian and $\sigma>0$. Let $X= \frac{1}{\sigma} \sinh(\sigma Y)$. Then $$\dr{\alpha}{X}{e^t X} \le \alpha \cdot \frac{t^2}{2\sigma^2} + \frac{|t|}{2\sigma} + \max\{0,t\} \le \alpha |t| \left(\frac{|t|}{2\sigma^2} + \frac{1}{2\sigma} + 1\right)$$ for all $t \in \mathbb{R}$ and all $\alpha \in (1,\infty)$.
\end{lem}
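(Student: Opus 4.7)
The plan is to apply the Rényi triangle inequality (Lemma~\ref{lem:RenyiTriangle}), passing to $\gamma \to \infty$ to obtain the form $\dr{\alpha}{P}{Q} \le \dr{\alpha}{P}{R} + \dr{\infty}{R}{Q}$, with an intermediate distribution that isolates the ``latent Gaussian shift'' component of the dilation. Concretely, I would set $R := \tfrac{1}{\sigma}\sinh(\sigma Y + t)$ for $Y \sim \mathcal{N}(0,1)$: informally, $R$ is what one obtains from $X$ by translating the underlying Gaussian by $t/\sigma$ before applying the sinh map.

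For the first piece, the key observation is that $y \mapsto \tfrac{1}{\sigma}\sinh(\sigma y)$ is a bijection of $\mathbb{R}$, so Rényi divergence is preserved under it and the standard Gaussian shift formula yields $\dr{\alpha}{X}{R} = \dr{\alpha}{Y}{Y+t/\sigma} = \alpha t^2/(2\sigma^2)$. For the second piece, I would write out the densities of $R$ and $e^t X$ explicitly and analyse the log-ratio pointwise. Using the substitution $u = \arsinh(\sigma x)$ and $u' = \arsinh(e^{-t}\sinh u)$ (so that $\cosh u' = \sqrt{1 + e^{-2t}\sinh^2 u}$), the log-ratio simplifies to
$$L(u) \;=\; t \;+\; \log\frac{\cosh u'}{\cosh u} \;+\; \frac{(u')^2 - (u-t)^2}{2\sigma^2},$$
and the task reduces to showing $\sup_u L(u) \le |t|/(2\sigma) + \max\{0,t\}$. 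The three terms of the target bound mirror this expression: the $t$ summand (for $t>0$) is the scaling Jacobian $e^{-t}$ in $f_{e^tX}$, already visible at $u=0$ where $L(0) = t - t^2/(2\sigma^2)$; the $|t|/(2\sigma)$ piece emerges from combining $|\log(\cosh u'/\cosh u)| \le |t|$ with $|u'-u|\le |t|$, both of which follow from the fact that $\arsinh$ has derivative at most $1$. Feeding the two estimates through the triangle inequality yields the first stated bound, and the coarser second bound then follows from $\alpha \ge 1$.

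The main obstacle is the Step~2 calculus. For $t>0$, $\log(\cosh u'/\cosh u)$ and $(u')^2 - u^2$ are both nonpositive, so $L$ is pinned near $u=0$ and the analysis is clean. For $t<0$ both terms flip sign and $L(u)$ grows along the tails, so a naive pointwise supremum with this particular intermediate is unbounded; one must either sign-adapt the intermediate (e.g., by symmetrising over $\tfrac{1}{\sigma}\sinh(\sigma Y\pm t)$) or bypass the $\dr{\infty}$ step entirely and bound $\ex{U\sim\mathcal{N}(0,\sigma^2)}{e^{(\alpha-1) L(U)}}$ directly --- completing the square against the Gaussian density absorbs the worst-case $|u||t|/\sigma^2$ growth of $L$ and recovers the $\alpha t^2/(2\sigma^2) + |t|/(2\sigma)$ form of the claimed estimate.
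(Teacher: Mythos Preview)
Your decomposition via the intermediate $R=\tfrac{1}{\sigma}\sinh(\sigma Y+t)$ is natural, and the first piece $\dr{\alpha}{X}{R}=\alpha t^2/(2\sigma^2)$ is correct. The gap is in the second piece. In your own formula
\[
L(u)=t+\log\frac{\cosh u'}{\cosh u}+\frac{(u')^2-(u-t)^2}{2\sigma^2},
\]
the quadratic term is $(u')^2-(u-t)^2$, not $(u')^2-u^2$; your sentence ``for $t>0$, \dots and $(u')^2-u^2$ are both nonpositive, so $L$ is pinned near $u=0$'' silently swaps the two. With the correct term, for $t>0$ and $u\ge t$ one has $u-t\le u'\le u$, so $(u')^2-(u-t)^2\ge 0$ and can be of order $tu$. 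Concretely, take $\sigma=0.1$, $t=1$, $u=1$: then $u'=\arsinh(e^{-1}\sinh 1)\approx 0.420$, the $\log\cosh$ term is about $-0.35$, but $(u')^2-(u-t)^2\approx 0.176$ and dividing by $2\sigma^2=0.02$ gives $8.8$, so $L(1)\approx 9.45$, while your target $|t|/(2\sigma)+t=6$. Thus $\dr{\infty}{R}{e^tX}$ does \emph{not} satisfy the bound you need, even for $t>0$; for small $\sigma$ it scales like $t^2/\sigma^2$ rather than $t/\sigma$, so the triangle route cannot recover the statement.

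The paper does not use an intermediate at all. It writes $\dr{\alpha}{X}{e^tX}=\dr{\alpha}{Y}{g^{-1}(Y)}$ with $g(y)=\tfrac{1}{\sigma}\arsinh(e^{-t}\sinh(\sigma y))$, so that the density ratio is $e^{(g(y)^2-y^2)/2}/g'(y)$, and then pulls the two factors apart: $\sup_y 1/g'(y)\le e^{\max\{0,t\}}$ (an elementary calculation), and $g(y)^2-y^2\le t^2/\sigma^2+\tfrac{2}{\sigma}\max\{0,-ty\}$, which after integrating $e^{(\alpha-1)(\cdot)/2}$ against the standard Gaussian produces exactly $\alpha t^2/(2\sigma^2)+|t|/(2\sigma)$. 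This last step is precisely the ``completing the square'' you describe in your fallback, but note it is applied to the \emph{direct} log-ratio (with $u^2$, not $(u-t)^2$), with $U=\sigma Y\sim N(0,\sigma^2)$ coming from the law of $X$, not of $R$; once you go this route the intermediate $R$ is doing no work.
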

\begin{proof}
Fix $t \in \mathbb{R}$ and all $\alpha \in (1,\infty)$. We have
$$\dr{\alpha}{X}{e^t X}= \dr{\alpha}{\frac{1}{\sigma} \sinh(\sigma Y)}{\frac{e^t}{\sigma} \sinh(\sigma Y)} = \dr{\alpha}{Y}{\frac{1}{\sigma}\arsinh\left(e^t \sinh(\sigma Y)\right)}=\dr{\alpha}{Y}{g^{-1}(Y)},$$ where $g(x) = \frac{1}{\sigma} \arsinh(e^{-t}\sinh(\sigma x))$. By Lemma \ref{lem:density-transform}, the density of $g^{-1}(Y)$ is given by $f_{g^{-1}(Y)}(y) = f_Y(g(y)) \cdot g'(y)$. 
Hence
\begin{align*}
    e^{(\alpha-1)\dr{\alpha}{X}{e^t X}} &= e^{(\alpha-1)\dr{\alpha}{Y}{g^{-1}(Y)}}\\
    &= \ex{Y \leftarrow N(0,1)}{\left(\frac{f_Y(Y)}{f_{g^{-1}(Y)}(Y)}\right)^{\alpha-1}}\\
    &= \ex{Y \leftarrow N(0,1)}{\left(\frac{f_Y(Y)}{f_{Y}(g(Y)) \cdot g'(Y)}\right)^{\alpha-1}}\\
    &= \ex{Y \leftarrow N(0,1)}{ e^{\frac{\alpha-1}{2}(g(Y)^2-Y^2)} \cdot \frac{1}{(g'(Y))^{\alpha-1}} }\\
    &\le \ex{Y \leftarrow N(0,1)}{ e^{\frac{\alpha-1}{2}(g(Y)^2-Y^2)}} \cdot \left(\sup_{y \in \mathbb{R}} \frac{1}{g'(y)}\right)^{\alpha-1}.
\end{align*}
We now bound these terms one by one.

We have, for all $y \in \mathbb{R}$, $$g'(y) = \frac{e^{-t}\cosh(\sigma y)}{\sqrt{1 + e^{-2t} \sinh(\sigma y)^2}}>0.$$
Thus, for all $y \in \mathbb{R}$,
\begin{align*}
    \left(\frac{1}{g'(y)}\right)^2 &= \frac{1+e^{-2t} \sinh(\sigma y)^2}{e^{-2t} \cosh(\sigma y)^2}\\
    &= \frac{1+e^{-2t} (\cosh(\sigma y)^2-1)}{e^{-2t} \cosh(\sigma y)^2}\\
    &= 1+\frac{1-e^{-2t}}{e^{-2t}\cosh(\sigma y)^2}\\
    &= 1 + \frac{e^{2t}-1}{\cosh(\sigma y)^2}\\
    &\le \max\{1,e^{2t}\}.
\end{align*}
This implies that $\sup_{y \in \mathbb{R}} \frac{1}{g'(y)} \le \max\{1,e^t\} = e^{\max\{0,t\}}$.

Now we consider $y \in \mathbb{R}$ fixed and let $t\in \mathbb{R}$ vary: Define $$h(t) = g(y) = \frac{1}{\sigma} \arsinh(e^{-t}\sinh(\sigma y)).$$
Then $h(0)=y$ and 
\begin{align*}
    h'(t) &= \frac{1}{\sigma} \frac{1}{\sqrt{1+(e^{-t}\sinh(\sigma y))^2}} \cdot e^{-t}\sinh(\sigma y)(-1)\\
    &= \frac{-1}{\sigma}  \frac{1}{\sqrt{1+(e^{-t}\sinh(\sigma y))^{-2}}}\\
    &\in \left[\frac{-1}{\sigma},0\right].
\end{align*}
This implies that, for all $y \in \mathbb{R}$, $$\frac{\min\{0,-t\}}{\sigma} \le g(y)-y \le \frac{\max\{0,-t\}}{\sigma}$$ and $$g(y)^2-y^2 = (g(y)-y)^2 + (g(y)-y)\cdot 2y \le \frac{t^2}{\sigma^2} + \frac{2}{\sigma}\max\{0,-ty\}.$$

Now we calculate:
\begin{align*}
    \ex{Y \leftarrow N(0,1)}{ e^{\frac{\alpha-1}{2}(g(Y)^2-Y^2)}} &\le \ex{Y \leftarrow N(0,1)}{ e^{\frac{\alpha-1}{2}(t^2/\sigma^2 + 2\max\{0,-tY\}/\sigma)}}\\
    &= e^{(\alpha-1)t^2/2\sigma^2} \cdot \ex{Y \leftarrow N(0,1)}{ e^{(\alpha-1) \max\{0,|t|Y\}/\sigma}}\\
    (v:=(\alpha-1)|t|/\sigma\ge0) ~~&= e^{(\alpha-1)t^2/2\sigma^2} \cdot \left(\frac12 + \int_0^\infty e^{vy} \cdot \frac{e^{-y^2/2}}{\sqrt{2\pi}} \mathrm{d}y\right)\\
    (y \mapsto \hat y+v) ~~&= e^{(\alpha-1)t^2/2\sigma^2} \cdot \left(\frac12 + \int_{-v}^\infty e^{v(\hat y+v)} \cdot \frac{e^{-(\hat y+v)^2/2}}{\sqrt{2\pi}} \mathrm{d}\hat y\right)\\
    &= e^{(\alpha-1)t^2/2\sigma^2} \cdot \left(\frac12 + \int_{-v}^\infty e^{v^2/2} \cdot \frac{e^{-\hat y^2/2}}{\sqrt{2\pi}} \mathrm{d}\hat y\right)\\
    &= e^{(\alpha-1)t^2/2\sigma^2} \cdot \left(\frac12 + e^{v^2/2} \cdot \pr{Y \leftarrow N(0,1)}{Y\ge -v}\right)\\
    &\le e^{(\alpha-1)t^2/2\sigma^2} \cdot \left(\frac12 + e^{v^2/2} \cdot \left( \frac12 + \frac{v}{\sqrt{2\pi}}\right)\right)\\
    &\le e^{(\alpha-1)t^2/2\sigma^2} \cdot e^{v^2/2 + v/2} \\
    &= \exp\left((\alpha-1)\left(\alpha \cdot \frac{t^2}{2\sigma^2} + \frac{|t|}{2\sigma}\right)\right).
\end{align*}
Now we complete the calculation:
\begin{align*}
    e^{(\alpha-1)\dr{\alpha}{X}{e^t X}}
    &\le \ex{Y \leftarrow N(0,1)}{ e^{\frac{\alpha-1}{2}(g(Y)^2-Y^2)}} \cdot \left(\sup_{y \in \mathbb{R}} \frac{1}{g'(y)}\right)^{\alpha-1}\\
    &\le e^{(\alpha-1)\left(\alpha t^2/2\sigma^2 + |t|/2\sigma\right)} \cdot e^{(\alpha-1)\max\{0,t\}}.
\end{align*}
This implies the result: $$\dr{\alpha}{X}{e^t X} \le \alpha \cdot \frac{t^2}{2\sigma^2} + \frac{|t|}{2\sigma} + \max\{0,t\} .$$
\end{proof}

\subsection{Student's T}\label{sec:studentsT}
Nissim, Raskhodnikova, and Smith~\cite{NissimRS07} showed that distributions with density $\propto \frac{1}{1+|z|^\gamma}$ for any constant $\gamma > 0$ can be scaled to smooth sensitivity to guarantee pure differential privacy. Here we show that the same is true for the family of Student's T distributions, which are defined similarly and have a number of uses in statistics.

\newcommand{\T}[1]{\mathsf{T}(#1)}
\begin{defn}
The student's T distribution with $d>0$ degrees of freedom is denoted $\T{d}$ and has a probability density of $$f_{\T{d}}(x) = \frac{\Gamma\left(\frac{d+1}{2}\right)}{\sqrt{\pi d} \Gamma\left(\frac{d}{2}\right)} \left(\frac{1}{1+\frac{x^2}{d}}\right)^{\frac{d+1}{2}}$$
\end{defn}
The Cauchy distribution corresponds to $\T{1}$. The T distribution is centered at zero and has variance $\frac{d}{d-2}$ for $d>2$. For integral $d\ge1$ we can sample $Z \leftarrow \T{d}$ by letting $Z=\frac{X_0}{\sqrt{\frac{1}{d} \sum_{i=1}^d X_i^2}}$, where $X_0, X_1, \cdots, X_d$ are independent standard Gaussians.

\begin{thm}\label{thm:SS-DP-ST}
Let $Z \leftarrow \T{d}$ with $d>0$ and $s,t\in\mathbb{R}$. Then, for all $\alpha\in(1,\infty)$, $$\left. \begin{array}{c} \dr{\alpha}{Z}{e^t Z+s} \\  \dr{\alpha}{e^t Z+s}{Z}  \end{array} \right\} \le \min\left\{|t| \cdot (d+1) + |s| \cdot \frac{d+1}{2\sqrt{d}}, \frac{\alpha}{2}\left(|t| \cdot (d+1) + |s| \cdot \frac{d+1}{2\sqrt{d}}\right)^2\right\}.$$
\end{thm}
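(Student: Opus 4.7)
The plan is to reduce to a pure differential privacy bound, i.e.~to show that both $\dr{\infty}{Z}{e^tZ+s}$ and $\dr{\infty}{e^tZ+s}{Z}$ are at most $\varepsilon := |t|(d+1) + |s|\frac{d+1}{2\sqrt{d}}$, and then invoke Proposition \ref{prop:pure-cdp} to obtain the $\frac{\alpha}{2}\varepsilon^2$ bound. The other term in the minimum comes for free from the monotonicity $\dr{\alpha}{\cdot}{\cdot}\le\dr{\infty}{\cdot}{\cdot}$.

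First I would separately analyse additive and multiplicative distortions of $Z\leftarrow\T{d}$. Writing $g(z):=\log f_{\T{d}}(z) = \mathrm{const} - \frac{d+1}{2}\log(1+z^2/d)$, a direct computation yields $g'(z) = -\frac{(d+1)z}{d+z^2}$, whose absolute value is maximised at $|z|=\sqrt{d}$ and equals $\frac{d+1}{2\sqrt{d}}$. The mean value theorem therefore gives $|g(z)-g(z+s')|\le\frac{d+1}{2\sqrt{d}}|s'|$ for any $z,s'\in\mathbb{R}$, and hence $\dr{\infty}{Z}{Z+s'}\le\frac{d+1}{2\sqrt{d}}|s'|$ for every $s'$.

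Next I would handle multiplicative dilation. Writing
\[
g(ze^{-t})-g(z) = -\tfrac{d+1}{2}\log\tfrac{1+z^2e^{-2t}/d}{1+z^2/d},
\]
the ratio inside the logarithm is a monotone function of $z^2$ lying between $1$ and $e^{-2t}$. Consequently $|g(ze^{-t})-g(z)|\le\frac{d+1}{2}\cdot 2|t|=(d+1)|t|$, and moreover $g(ze^{-t})-g(z)$ has the same sign as $t$. Since $f_{e^tZ}(z)=e^{-t}f_Z(ze^{-t})$, this sign information lets me bound
\[
\dr{\infty}{Z}{e^tZ} = t+\sup_z\bigl(g(z)-g(ze^{-t})\bigr), \qquad \dr{\infty}{e^tZ}{Z} = -t+\sup_z\bigl(g(ze^{-t})-g(z)\bigr),
\]
each by $\max(1,d)|t|\le(d+1)|t|$ after checking the two signs of $t$.

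Finally I would combine the two pieces using the triangle inequality for $\dr{\infty}{\cdot}{\cdot}$ and the fact that an additive shift does not change $D_\infty$. For instance,
\[
\dr{\infty}{Z}{e^tZ+s}\le\dr{\infty}{Z}{Z+s}+\dr{\infty}{Z+s}{e^tZ+s}=\dr{\infty}{Z}{Z+s}+\dr{\infty}{Z}{e^tZ}\le\tfrac{d+1}{2\sqrt{d}}|s|+(d+1)|t|,
\]
and symmetrically for the reverse direction. Proposition \ref{prop:pure-cdp} then yields the $\tfrac{\alpha}{2}\varepsilon^2$ bound, while $\dr{\alpha}{\cdot}{\cdot}\le\dr{\infty}{\cdot}{\cdot}\le\varepsilon$ gives the $\varepsilon$ bound in the minimum. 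The main subtlety, and the only place that requires care, is keeping track of the sign of $t$ when combining $g(ze^{-t})-g(z)$ with the Jacobian term $\pm t$ coming from the change of variables; without using that sign information one would only obtain a constant of $d+2$ rather than $d+1$ in front of $|t|$.
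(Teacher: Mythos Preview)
Your proposal is correct and follows essentially the same approach as the paper: bound the max-divergence under additive and multiplicative distortions separately by differentiating the log-density, combine via the triangle inequality for $\dr{\infty}{\cdot}{\cdot}$, and then invoke Proposition~\ref{prop:pure-cdp}. The one notable difference is that you are explicit about the Jacobian factor $e^{-t}$ in $f_{e^tZ}(z)=e^{-t}f_Z(e^{-t}z)$ and track the sign of $g(ze^{-t})-g(z)$ to absorb it, obtaining $\max(1,d)|t|\le(d+1)|t|$; the paper's write-up suppresses this step and simply asserts $\dr{\infty}{Z}{e^tZ}\le(d+1)|t|$ from $|g'(t)|\le d+1$, so your account is in fact more careful on this point.
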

\begin{proof}
Let $Z \leftarrow \T{d}$.
We handle the multiplicative distortion first and then the additive distortion. Define $$g(t) = \log\left(\frac{f_T(e^tx)}{f_T(x)}\right)= \frac{d+1}{2} \left(\log(d+x^2) - \log(d+e^{2t}x^2)\right).$$
Then
$$g'(t) = -\frac{d+1}{2} \frac{2e^{2t}x^2}{d+e^{2t}x^2}$$
and
$$|g'(t)| \le d+1.$$
Thus $\dr{\infty}{Z}{e^tZ}\le |t| \cdot (d+1)$. Next we handle the additive distortion. Define $$h(s) = \log\left(\frac{f_T(x+s)}{f_T(x)}\right)= \frac{d+1}{2} \left(\log(d+x^2) - \log(d+(x+s)^2)\right).$$
Then $$h'(s) = -\frac{d+1}{2} \frac{2(x+s)}{d+(x+s)^2} = - \frac{d+1}{d/y+y}$$ for $y=x+s$. The magnitude is maximized for $y=\sqrt{d}$, yielding the bound $$|h'(s)|\le \frac{d+1}{2\sqrt{d}}.$$ Thus $\dr{\infty}{Z}{Z+s}\le |s| \cdot \frac{d+1}{2\sqrt{d}}$. Combining the multiplicative and additive bounds via a triangle-like inequality for max divergence and Proposition \ref{prop:pure-cdp} yields the result.
\end{proof}

\subsection{Gaussian \& tCDP}\label{sec:tcdp}
For comparison, we consider the Normal distribution under tCDP \cite{BunDRS18} -- a relaxation of CDP. First we state the definition of tCDP.

\begin{defn}[Truncated CDP \cite{BunDRS18}] \label{defn:cdp}
A randomized algorithm $M : \mathcal{X}^n \to \mathcal{Y}$ is $\left(\frac12\varepsilon^2,\omega\right)$-truncated CDP ($\left(\frac12\varepsilon^2,\omega\right)$-tCDP) if, for all $x,x'\in\mathcal{X}^n$ differing in a single entry, $$\sup_{\alpha\in(1,\omega)} \frac{1}{\alpha} \dr{\alpha}{M(x)}{M(x')} \le \frac12 \varepsilon^2,$$ where $\dr{\alpha}{\cdot}{\cdot}$ denotes the R\'enyi divergence of order $\alpha$.
\end{defn}

Gaussian noise with variance $\sigma^2$ scaled to $t$-smooth sensitivity provides\\$\left(\frac{1}{2\sigma^2 (1-\omega(1-e^{-t}))} + \frac{t^2}{4(1-\omega(1-e^{-t}))^2},\omega\right)$-tCDP for all $\omega <\frac{1}{1-e^{-t}}$.

\begin{lem}[\cite{BunDRS18}]\label{lem:bdrs-ss-gauss}
Let $\mu_0,\mu_1,\sigma,t,\gamma,\alpha \in \mathbb{R}$. If $\alpha>1$ and $\alpha (e^t-1)+1\geq \gamma>0$, then
$$
\dr{\alpha}{\mathcal{N}(\mu_0,\sigma^2)}{\mathcal{N}(\mu_1,e^t\sigma^2)} \leq \alpha \cdot \left( \frac{(\mu_0-\mu_1)^2}{2\gamma \cdot \sigma^2}  + \frac{t^2}{4 \gamma^2} \right).$$
\end{lem}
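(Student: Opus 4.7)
The plan is to derive the R\'enyi divergence in closed form and then bound each piece using the hypothesis on $\gamma$. First, compute
\[
\dr{\alpha}{\mathcal{N}(\mu_0,\sigma^2)}{\mathcal{N}(\mu_1,e^t\sigma^2)} = \frac{\alpha(\mu_0-\mu_1)^2}{2\sigma^2 c} + \frac{\alpha t - \ln c}{2(\alpha-1)}, \qquad c := 1+\alpha(e^t-1),
\]
by evaluating $\int p^{\alpha} q^{1-\alpha}\,\mathrm{d}x$ as a Gaussian integral: completing the square shows that the coefficient of $x^2$ in the exponent equals $-c/(2 e^t \sigma^2)$, so the integral converges precisely when $c>0$, which holds by the hypothesis $c \geq \gamma > 0$. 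Applying $\int e^{-ax^2}\,\mathrm{d}x = \sqrt{\pi/a}$ and simplifying then yields the displayed identity.

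The mean-shift term is immediate from the hypothesis: since $c \geq \gamma$, we have $\frac{\alpha(\mu_0-\mu_1)^2}{2\sigma^2 c} \leq \alpha \cdot \frac{(\mu_0-\mu_1)^2}{2\gamma\sigma^2}$, matching the first piece of the claimed bound. What remains is the scale term:
\[
\frac{\alpha t - \ln c}{2(\alpha-1)} \leq \frac{\alpha t^2}{4\gamma^2}, \qquad \text{equivalently, } \alpha t - \ln c \leq \frac{\alpha(\alpha-1) t^2}{2\gamma^2}.
\]

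For this inequality I would set $\phi(t) = \alpha t - \ln c(t)$ with $c(t) = 1+\alpha(e^t-1)$, and verify by direct differentiation that $\phi(0)=\phi'(0)=0$ and
\[
\phi''(t) = \frac{\alpha(\alpha-1)\, e^t}{c(t)^2}.
\]
Taylor's theorem with integral remainder then gives $\phi(t) = \int_0^t (t-s)\phi''(s)\,\mathrm{d}s$, so the desired estimate follows from a uniform bound $\phi''(s) \leq \alpha(\alpha-1)/\gamma^2$ on $[0,t]$.

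The main obstacle is exactly this uniform bound on $\phi''$: the hypothesis $c(t) \geq \gamma$ only pins down $c$ at the endpoint, whereas $\phi''(s)$ depends on $c(s)$ for interior $s$. Resolving this likely requires either a change of variable that makes $c$ itself the integration parameter (so the $1/c^2$ factor is absorbed naturally into the Jacobian), or a monotonicity argument that leverages the specific shape of $e^s/c(s)^2$ together with the endpoint condition. Once this uniform estimate is in place, integration delivers $\phi(t) \leq \alpha(\alpha-1) t^2/(2\gamma^2)$, and combining with the mean-shift bound completes the proof.
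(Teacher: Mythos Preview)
The paper does not prove this lemma; it is quoted from \cite{BunDRS18} and stated without argument, so there is no in-paper proof to compare against.

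Your closed-form computation of the divergence and the bound on the mean-shift term are correct. The gap you flag in the scale term is genuine, and in fact it cannot be closed in full generality: as literally stated, the inequality fails when $t>0$ and $\gamma>1$. For instance, take $\mu_0=\mu_1$, $\alpha=2$, $t=1$, $\gamma=3/2$; then $c=2e-1\approx 4.44\ge\gamma$, the exact divergence equals $\tfrac{2-\ln(2e-1)}{2}\approx 0.255$, while the claimed bound is $\alpha t^2/(4\gamma^2)=2/9\approx 0.222$.

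Your Taylor-remainder argument \emph{does} succeed in the regime that actually matters for the smooth-sensitivity application, namely $\gamma\le 1$ (note that the paper's own use of the lemma, in the tCDP bound stated just above it, has $\gamma=1-\omega(1-e^{-t})<1$). Differentiating $e^{s}/c(s)^{2}$ gives numerator proportional to $c(s)-2\alpha e^{s}=1-\alpha-\alpha e^{s}<0$, so $e^{s}/c(s)^{2}$ is decreasing. Hence for $t\ge 0$ its maximum on $[0,t]$ is the value $1$ at $s=0$, giving $\phi''(s)\le \alpha(\alpha-1)\le \alpha(\alpha-1)/\gamma^{2}$ whenever $\gamma\le 1$; for $t\le 0$ its maximum on $[t,0]$ is $e^{t}/c(t)^{2}\le 1/c(t)^{2}\le 1/\gamma^{2}$ directly from the hypothesis $c(t)\ge\gamma$. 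In either case $\phi(t)=\int_{0}^{t}(t-s)\phi''(s)\,\mathrm{d}s\le \alpha(\alpha-1)t^{2}/(2\gamma^{2})$, which is exactly the missing estimate. So your plan is sound once the extra constraint $\gamma\le 1$ is imposed; without it the statement itself is false, not merely your proof.
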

\subsection{Laplace \& Approximate DP}
As a final comparison, we revisit the approximate DP guarantees of Laplace noise. This is a sharpening of the analysis of Nissim, Raskhodnikova, and Smith \cite{NissimRS07}.

\begin{thm}\label{thm:ss-lap-dp}
Let $X$ be a standard Laplace random variable and $s,t \in \mathbb{R}$. Then, for all measurable $E \subseteq \mathbb{R}$, $$ \pr{}{e^tX+s \in E} \le e^{\varepsilon} \pr{}{X \in E} + \delta$$
for any $\delta\in(0,e^{-2})$ and $$\varepsilon \ge |s| + (e^{|t|}-1)\log(1/\delta)-|t|.$$
\end{thm}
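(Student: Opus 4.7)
The density of $Y = e^t X + s$ is $f_Y(y) = \tfrac12 e^{-t - e^{-t}|y-s|}$, and the density of $X$ is $f_X(y) = \tfrac12 e^{-|y|}$, so the privacy loss is $L(y) := \log(f_Y(y)/f_X(y)) = |y| - e^{-t}|y-s| - t$. The standard approximate-DP reduction gives
\[
\pr{}{Y \in E} \le e^\varepsilon \pr{}{X \in E} + \pr{}{L(Y) > \varepsilon},
\]
so it suffices to show $\pr{}{L(Y) > \varepsilon} \le \delta$ under the hypothesis. Substituting $Y = e^tX + s$ simplifies the privacy loss to $L(Y) = |e^tX + s| - |X| - t$. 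By symmetry of the Laplace distribution (replacing $(X,Y,s)$ by $(-X,-Y,-s)$), I can assume $s \ge 0$ without loss of generality.

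\textbf{The case $t > 0$.} I would perform a three-way case analysis on $X$, splitting at $0$ and $-s/e^t$. When $X \ge 0$, the condition $L(Y) > \varepsilon$ becomes $(e^t-1)X + s - t > \varepsilon$, i.e.\ $X > (t+\varepsilon-s)/(e^t-1)$. When $-s/e^t \le X < 0$, it becomes $X > (t+\varepsilon-s)/(e^t+1)$, whose right-hand side is nonnegative under the hypothesis (since $\varepsilon + t \ge s + (e^t-1)\log(1/\delta) \ge s$), so this region contributes nothing. When $X < -s/e^t$, it becomes $X < -(s+t+\varepsilon)/(e^t-1)$, whose threshold is $\le -s/e^t$ for free. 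Adding up the two Laplace-tail probabilities and discarding the smaller one,
\[
\pr{}{L(Y) > \varepsilon} = \tfrac12 e^{-(t+\varepsilon-s)/(e^t-1)} + \tfrac12 e^{-(s+t+\varepsilon)/(e^t-1)} \le e^{-(t+\varepsilon-s)/(e^t-1)}.
\]
Setting this $\le \delta$ and solving for $\varepsilon$ yields exactly $\varepsilon \ge s + (e^t - 1)\log(1/\delta) - t$, matching the hypothesis.

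\textbf{The case $t \le 0$ and obstacle.} Here $e^{-t} \ge 1$, so on each of the three intervals $(-\infty,0]$, $[0,s]$, $[s,\infty)$ the function $y \mapsto L(y)$ is linear with slope of sign matching that of $(y - 0)$ or $(y - s)$ in a way that makes $y = s$ the unique maximizer, giving $\sup_y L(y) = L(s) = |s| + |t|$. Under the hypothesis, using $e^{|t|}-1 \ge |t|$ and $\log(1/\delta) \ge 2$ (which is where the condition $\delta < e^{-2}$ is used) we get $(e^{|t|}-1)\log(1/\delta) \ge 2|t|$, hence $\varepsilon \ge |s| + |t|$, so $\pr{}{L(Y) > \varepsilon} = 0$ and the bound holds trivially. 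The computation is elementary throughout; the only subtle step is verifying that the middle region in the $t > 0$ case contributes nothing, for which one must check that the hypothesis on $\varepsilon$ forces $t + \varepsilon \ge s$.
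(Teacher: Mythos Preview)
Your proof is correct. The approach differs in organization from the paper's but is closely related. The paper splits the integral of $f_Y$ over $E$ into the interval $[s-c,s+c]$ and its complement, bounding the density ratio on the former (using $-e^{-t}|x-s|\le -|x|+|s|+(1-e^{-t})|x-s|$) and the tail mass on the latter, then sets $c=e^t\log(1/\delta)$. You instead use the canonical privacy-loss decomposition $\pr{}{Y\in E}\le e^\varepsilon\pr{}{X\in E}+\pr{}{L(Y)>\varepsilon}$ and compute the superlevel set of $L$ explicitly by a piecewise-linear case analysis. Both are ``good set / bad set'' arguments; yours partitions on the level set of $L$ rather than a fixed interval around $s$, which makes the source of each term more transparent and yields the exact tail $\tfrac12 e^{-(\varepsilon+t-s)/(e^t-1)}+\tfrac12 e^{-(\varepsilon+t+s)/(e^t-1)}$ before you coarsen it. The $t\le 0$ case is handled identically in spirit: both proofs show that in this regime the inequality is actually pure DP with parameter $|s|+|t|$, and then invoke $\delta<e^{-2}$ only to verify that the stated bound on $\varepsilon$ dominates $|s|+|t|$.
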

\begin{proof}
Fix $s,t\in\mathbb{R}$, $c>0$, and a measurable set $E$. Since $-|x-s|e^{-t} = -|x-s| + (1-e^{-t})|x-s| \le -|x|+|s|+(1-e^{-t})|x-s|$, we have
\begin{align*}
    \pr{}{e^tX+s \in E} &= \int_E \frac{1}{2e^t} e^{-|x-s|e^{-t}} \mathrm{d}x\\
    &= \int_{E \cap [s-c,s+c]} \frac{1}{2e^t} e^{-|x-s|e^{-t}} \mathrm{d}x+\int_{E \setminus [s-c,s+c]} \frac{1}{2e^t} e^{-|x-s|e^{-t}} \mathrm{d}x\\
    &\le \int_{E \cap [s-c,s+c]} \frac{1}{2e^t} e^{-|x|+|s| + \max\{0,c(1-e^{-t})\}} \mathrm{d}x+\int_{\mathbb{R} \setminus [s-c,s+c]} \frac{1}{2e^t} e^{-|x-s|e^{-t}} \mathrm{d}x\\
    &= e^{|s|-t+\max\{0,c(1-e^{-t})\}} \pr{}{X \in E \cap [s-c,s+c]} +  \frac{1}{e^t} \int_c^\infty e^{-ye^{-t}} \mathrm{d}y \\
    &\le e^{|s|-t+\max\{0,c(1-e^{-t})\}} \pr{}{X \in E} + e^{-ce^{-t}}.
\end{align*}
Setting $c=e^t \log(1/\delta)$ yields $$\pr{}{e^tX+s \in E} \le e^{|s| -t+ \max\{0,(e^t-1)\log(1/\delta)\}} \pr{}{X \in E} + \delta.$$
\end{proof}

\section{Lower Bounds}

We provide a lower bound on the tail of any distribution providing concentrated differential privacy for smooth sensitivity. We note that this roughly matches the tail bounds attained by our three distributions.

\begin{prop}\label{prop:lb-tails}
Let $s,t,\varepsilon>0$. 
Let $Z$ be a real random variable satisfying, for all $\alpha\in(1,\infty)$, $$\dr{\alpha}{e^tZ+s}{Z} \le \frac12 \varepsilon^2 \alpha ~~~\text{ and }~~~ \dr{\alpha}{Z}{e^tZ-s} \le \frac12 \varepsilon^2 \alpha.$$
Then, for all $x > 0$, $$\pr{}{|Z|>x} \ge \frac14 e^{-\varepsilon^2\left\lceil\frac{1}{t} \log\left(1+\frac{x}{s} (e^t-1)\right)\right\rceil^2} = e^{-\Theta\left(\frac{\varepsilon}{t}\log\left(\frac{xt}{s}\right)\right)^2}.$$
\end{prop}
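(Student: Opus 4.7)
The plan is to combine iterated ``group privacy'' for concentrated differential privacy (obtained by inductive application of Lemma~\ref{lem:cdptriangle}) with a standard probability-preservation inequality for R\'enyi divergence. Concretely, set $k := \left\lceil \frac{1}{t} \log\!\left(1 + \frac{x}{s}(e^t - 1)\right)\right\rceil$ and $c_k := s(e^{kt}-1)/(e^t-1)$, so that $c_k \ge x$ by construction. Define the affine map $T(z) := e^t z + s$, so $T^k(z) = e^{kt}z + c_k$.

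The first step is to iterate the first assumption. By the affine invariance of R\'enyi divergence, applying the bijection $z \mapsto e^{-jt}(z - c_j)$ to both arguments reduces $\dr{\alpha}{T^{j+1}(Z)}{T^j(Z)}$ to $\dr{\alpha}{T(Z)}{Z} \le \tfrac12 \varepsilon^2 \alpha$. Setting $a_j := \dr{\alpha}{T^j(Z)}{Z}/\alpha$, Lemma~\ref{lem:cdptriangle} applied to the chain $T^k(Z) \to T^{k-1}(Z) \to Z$ gives the recursion $\sqrt{a_k} \le \sqrt{a_{k-1}} + \sqrt{a_1} \le \sqrt{a_{k-1}} + \varepsilon/\sqrt 2$, and induction yields $\sqrt{a_k} \le k\varepsilon/\sqrt 2$. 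That is, $\dr{\alpha}{T^k(Z)}{Z} \le \tfrac12 k^2 \varepsilon^2 \alpha$ for all $\alpha > 1$.

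Next I would invoke the probability-preservation inequality: data processing implies $\dr{\alpha}{P}{Q} \ge \frac{1}{\alpha-1}\log\!\bigl(P(E)^\alpha Q(E)^{1-\alpha}\bigr)$ for every measurable $E$, which rearranges to $Q(E) \ge P(E)^{\alpha/(\alpha-1)} e^{-\dr{\alpha}{P}{Q}}$. Instantiating with $\alpha = 2$, $P = T^k(Z)$, $Q = Z$, and $E = (x, \infty)$, and noting that $c_k \ge x$ forces $(x-c_k)/e^{kt} \le 0$, gives
\[
\Pr[Z > x] \;\ge\; \Pr[T^k(Z) > x]^2 \, e^{-k^2 \varepsilon^2} \;=\; \Pr\!\bigl[Z > (x-c_k)/e^{kt}\bigr]^2 \, e^{-k^2\varepsilon^2} \;\ge\; \Pr[Z \ge 0]^2 \, e^{-k^2 \varepsilon^2}.
\]
By the fully analogous argument using the second assumption, after applying the sign-flip bijection $z \mapsto -z$ to reorient it into a forward-direction R\'enyi bound on the downward-pushing map $\tilde T(y) := e^ty - s$ acting on $-Z$ (which can then be iterated via Lemma~\ref{lem:cdptriangle} in the same way), one obtains the mirror inequality $\Pr[Z < -x] \ge \Pr[Z \le 0]^2 e^{-k^2\varepsilon^2}$. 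Since $\Pr[Z \ge 0] + \Pr[Z \le 0] \ge 1$, at least one of these probabilities is $\ge \tfrac12$, so at least one of $\Pr[Z > x]$, $\Pr[Z < -x]$ is $\ge \tfrac14 e^{-k^2 \varepsilon^2}$, which gives the claimed bound on $\Pr[|Z| > x]$.

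The main subtlety, and hence the main obstacle, is the mirror argument for the left tail: the two hypotheses are asymmetric in R\'enyi direction (the first is forward for the upward push $T$, the second is reverse for the downward push $\tilde T$), so one has to chase carefully through the bijection $z \mapsto -z$ to produce a forward-direction divergence bound that is compatible with probability preservation for the left tail. The inductive chaining in the group-privacy step is routine but also warrants some care to ensure the single-step divergences really do equal $\dr{\alpha}{T(Z)}{Z}$ rather than something larger.
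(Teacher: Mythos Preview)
Your approach is essentially the paper's: iterate the one-step R\'enyi bound via Lemma~\ref{lem:cdptriangle} to obtain $\dr{\alpha}{T^k(Z)}{Z}\le\tfrac12 k^2\varepsilon^2\alpha$, then turn this into a lower bound on $\Pr[Z>x]$ using that $\Pr[T^k(Z)>x]\ge\Pr[Z\ge0]$ once $c_k\ge x$. The only cosmetic differences are that the paper postprocesses to Bernoullis and uses $\alpha\to1$ (via the explicit formula $\dr{1}{\tfrac12}{p}=\tfrac12\log\tfrac{1}{4p(1-p)}$) where you use the $\alpha=2$ probability-preservation inequality---both give exactly $\tfrac14 e^{-k^2\varepsilon^2}$---and the paper handles the two-sided conclusion by declaring ``WLOG $\Pr[Z\ge0]\ge\tfrac12$, else replace $Z$ by $-Z$'' rather than arguing the two tails separately.

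Your worry about the mirror step is well placed, and the resolution you sketch does not go through. Applying $z\mapsto -z$ to the second hypothesis yields $\dr{\alpha}{-Z}{-(e^tZ-s)}=\dr{\alpha}{W}{T(W)}$ for $W=-Z$: a \emph{reverse}-direction bound for the upward map $T$ on $W$, not the forward-direction bound you claim for $\tilde T$. A reverse bound cannot feed the probability-preservation inequality (nor the paper's KL version) in the direction needed to lower-bound $\Pr[W>x]$: with $P=W$ and $Q=T^k(W)$ you obtain only $\Pr[T^k(W)>x]\ge\Pr[W>x]^{2} e^{-k^2\varepsilon^2}$, which is useless. What the $-Z$ branch actually requires is $\dr{\alpha}{T(-Z)}{-Z}\le\tfrac12\varepsilon^2\alpha$, equivalently $\dr{\alpha}{e^tZ-s}{Z}\le\tfrac12\varepsilon^2\alpha$---the \emph{transpose} of the second hypothesis, not the second hypothesis itself. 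The paper's own ``replace $Z$ by $-Z$'' step tacitly relies on the same transposed form, so you have correctly located a genuine subtlety in the hypotheses as stated; in the intended applications (the distributions of \S\ref{sec:dists}) both R\'enyi directions are available and the issue evaporates.
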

\begin{proof}
We may assume $\pr{}{Z\ge0}\ge\frac12$. If not, we replace $Z$ with $-Z$ in the argument below.

Fix $x>0$ and let $p=\pr{}{Z\ge x}$. We may assume $p<\frac12$, as otherwise the result is trivial.

By postprocessing and group privacy (Lemma \ref{lem:cdptriangle}), for all integers $k \ge 0$ and all $\alpha \in (1,\infty)$,  $$\dr{\alpha}{\mathbb{I}\left[e^{kt}Z+s\frac{e^{kt}-1}{e^t-1}\ge x\right]}{\mathbb{I}[Z\ge x]}\le\dr{\alpha}{e^{kt}Z+s\frac{e^{kt}-1}{e^t-1}}{Z} \le \frac12 \varepsilon^2 k^2 \alpha.$$
Note that the indicators above are simply Bernoulli random variables. Let $q_k=\pr{}{e^{kt}Z+s\frac{e^{kt}-1}{e^t-1} \ge x}$. Then we have $\dr{1}{q_k}{p} := \dr{1}{\mathsf{Bern}(q_k)}{\mathsf{Bern}(p)} \le \frac12 \varepsilon^2 k^2$.

Suppose $k \ge \frac{1}{t} \log\left(1 + \frac{x}{s}(e^t-1)\right)$. Then $s\frac{e^{kt}-1}{e^t-1} \ge x$, whence $$q_k=\pr{}{e^{kt}Z+s\frac{e^{kt}-1}{e^t-1} \ge x} \ge \pr{}{Z \ge 0} \ge \frac12.$$

Thus $$\frac12 \varepsilon^2 k^2 \ge \dr{1}{q_k}{p} \ge \dr{1}{\frac12}{p} = \frac12 \log\left(\frac{1}{4p(1-p)}\right).$$
Setting $k=\left\lceil \frac{1}{t} \log\left(1 + \frac{x}{s}(e^t-1)\right) \right\rceil = \Theta(\log(xt/s)/t)$ and rearranging yields the result: $$p \ge p(1-p) \ge \frac14 e^{-\varepsilon^2 k^2} = e^{-\Theta\left(\frac{\varepsilon}{t} \log\left(\frac{xt}{s}\right)\right)^2}.$$
\end{proof}

We also provide a lower bound on the variance of the distributions providing concentrated differential privacy with smooth sensitivity.

\begin{prop} \label{prop:lb-var}
Let $s,t,\varepsilon>0$. 
Let $Z$ be a real random variable satisfying $\ex{}{Z}=0$ and, for all $\alpha\in(1,\infty)$, $$\dr{\alpha}{e^tZ+s}{Z} \le \frac12 \varepsilon^2 \alpha ~~~\text{ and }~~~ \dr{\alpha}{Z}{e^{-t}Z-s} \le \frac12 \varepsilon^2 \alpha.$$
Then, for all $k,\ell \in \mathbb{Z}$ with $k,\ell\ge0$ and $k+\ell>0$, $$\ex{}{Z^2} \ge \frac{s^2}{(1-e^{-t})^2} \left(\frac{(e^{(k+\ell-1)t}-e^{(\ell-1) t}+e^{\ell t}-1)^2}{e^{\varepsilon^2 (k+ \ell)^2}-1} - (e^{\ell t}-1)^2\right).$$
\end{prop}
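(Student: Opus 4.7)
The strategy is to iterate the two one-step divergence hypotheses in opposite directions, combine them via the triangle-like inequality for R\'enyi divergence into a single R\'enyi-$2$ bound between two affinely related laws, and then extract a variance lower bound via Cauchy--Schwarz.

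Define the deterministic maps $T_+(z)=e^tz+s$ and $T_-(z)=e^{-t}z-s$, so the hypotheses read $\dr{\alpha}{T_+(Z)}{Z}\le\frac12\varepsilon^2\alpha$ and $\dr{\alpha}{Z}{T_-(Z)}\le\frac12\varepsilon^2\alpha$. By postprocessing of R\'enyi divergence with $T_+^{j-1}$ we obtain $\dr{\alpha}{T_+^j(Z)}{T_+^{j-1}(Z)}\le\frac12\varepsilon^2\alpha$ for every $j\ge 1$, and symmetrically $\dr{\alpha}{T_-^{j-1}(Z)}{T_-^j(Z)}\le\frac12\varepsilon^2\alpha$. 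Iterating Lemma~\ref{lem:cdptriangle} along the chain $T_-^\ell(Z)\to\cdots\to Z\to\cdots\to T_+^k(Z)$ of $k+\ell$ consecutive steps gives $\dr{\alpha}{T_+^k(Z)}{T_-^\ell(Z)}\le\frac12(k+\ell)^2\varepsilon^2\alpha$. Setting $\alpha=2$ yields the key inequality $\dr{2}{P}{Q}\le(k+\ell)^2\varepsilon^2$, where $P$ and $Q$ denote the laws of $T_+^k(Z)$ and $T_-^\ell(Z)$.

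Next, use the standard Cauchy--Schwarz consequence of a R\'enyi-$2$ bound: for any distributions $P,Q$ with densities $p,q$ and any function $f$, writing $\mathbb{E}_P[f]-\mathbb{E}_Q[f]=\mathbb{E}_Q[(p/q-1)f]$ and applying Cauchy--Schwarz with $\mathbb{E}_Q[(p/q-1)^2]=e^{\dr{2}{P}{Q}}-1$ gives $(\mathbb{E}_P[f]-\mathbb{E}_Q[f])^2\le(e^{\dr{2}{P}{Q}}-1)\cdot\mathbb{E}_Q[f^2]$. Apply this with $f(x)=x$. Using $\mathbb{E}[Z]=0$ and the closed forms $T_+^k(z)=e^{kt}z+s(e^{kt}-1)/(e^t-1)$ and $T_-^\ell(z)=e^{-\ell t}z-s(1-e^{-\ell t})/(1-e^{-t})$, compute $\mathbb{E}_P[X]=s(e^{kt}-1)/(e^t-1)$, $\mathbb{E}_Q[X]=-s(1-e^{-\ell t})/(1-e^{-t})$, and $\mathbb{E}_Q[X^2]=e^{-2\ell t}\mathbb{E}[Z^2]+\mathbb{E}_Q[X]^2$. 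Substituting and solving for $\mathbb{E}[Z^2]$ yields
\[\mathbb{E}[Z^2]\ge e^{2\ell t}\left(\frac{(\mathbb{E}_P[X]-\mathbb{E}_Q[X])^2}{e^{(k+\ell)^2\varepsilon^2}-1}-\mathbb{E}_Q[X]^2\right).\]
A short computation shows $e^{\ell t}(\mathbb{E}_P[X]-\mathbb{E}_Q[X])=\frac{s}{1-e^{-t}}(e^{(k+\ell-1)t}-e^{(\ell-1)t}+e^{\ell t}-1)$ and $e^{2\ell t}\mathbb{E}_Q[X]^2=s^2(e^{\ell t}-1)^2/(1-e^{-t})^2$, which together reproduce exactly the stated bound.

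The main obstacle is selecting the right pair of affine images to compare. Iterating $T_+$ upward from $Z$ and $T_-$ downward from $Z$, rather than composing them into a single iterate $T_+^k\circ T_-^\ell$, is what simultaneously maximizes the mean separation while keeping the R\'enyi cost at $(k+\ell)^2\varepsilon^2$ and makes the algebra collapse to the form in the statement. It is also important to use the raw second-moment version of Cauchy--Schwarz rather than the tighter $\mathrm{Var}_Q[f]$ variant, since it is the raw second moment that naturally generates the correction term $(e^{\ell t}-1)^2$.
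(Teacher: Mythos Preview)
Your proof is correct and follows essentially the same route as the paper: iterate the one-step hypotheses via postprocessing and the group-privacy/triangle lemma to obtain $\dr{\alpha}{T_+^k(Z)}{T_-^\ell(Z)}\le\frac12(k+\ell)^2\varepsilon^2\alpha$, then apply the R\'enyi-$2$ Pinsker/Cauchy--Schwarz inequality $|\mathbb{E}_P[X]-\mathbb{E}_Q[X]|\le\sqrt{\mathbb{E}_Q[X^2](e^{\dr{2}{P}{Q}}-1)}$ with $f(x)=x$ and rearrange. (One cosmetic slip: your chain should be written in the direction $T_+^k(Z)\to\cdots\to Z\to\cdots\to T_-^\ell(Z)$ to match the order of arguments in the divergence you conclude, but the step bounds you derived are in the correct direction for that chain.)
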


Setting $\ell=0$ yields $$\ex{}{Z^2} \ge \frac{s^2}{(e^t-1)^2} \cdot \frac{(e^{kt}-1)^2}{e^{\varepsilon^2 k^2}-1}.$$

\begin{proof}
By group privacy (Lemma \ref{lem:cdptriangle}), for all $k,\ell\in\mathbb{Z}$ with $k,\ell\ge0$ and all $\alpha\in(1,\infty)$, $$\dr{\alpha}{e^{kt}Z+s\frac{e^{kt}-1}{e^t-1}}{e^{-\ell t}Z-s\frac{1-e^{-\ell t}}{1-e^{-t}}}\le\frac12 \varepsilon^2 (k+\ell)^2 \alpha.$$
We next use a R\'enyi version of Pinsker's inequality \cite[Lem.~C.2]{BunS16}: $$\forall X ~\forall Y ~~~ \left|\ex{}{X}-\ex{}{Y}\right| \le \sqrt{\ex{}{Y^2}\left(e^{\dr{2}{X}{Y}}-1\right)}.$$
Setting $X=e^{kt}Z+s\frac{e^{kt}-1}{e^t-1}$ and $Y=e^{-\ell t}Z-s\frac{1-e^{-\ell t}}{1-e^{-t}}$ in the above, we have $$s \left(\frac{e^{kt}-1}{e^t-1}+\frac{1-e^{-\ell t}}{1-e^{-t}}\right) \le \sqrt{\ex{}{\left(e^{-\ell t}Z-s\frac{1-e^{-\ell t}}{1-e^{-t}}\right)^2}\left(e^{\varepsilon^2 (k+\ell)^2}-1\right)}.$$
This rearranges to $$e^{-2\ell t}\ex{}{Z^2} + s^2 \left(\frac{1-e^{-\ell t}}{1-e^{-t}}\right)^2 \ge s^2 \left(\frac{e^{kt}-1}{e^t-1}+\frac{1-e^{-\ell t}}{1-e^{-t}}\right)^2 \frac{1}{e^{\varepsilon^2(k+\ell)^2}-1}$$ and \begin{align*}
    \ex{}{Z^2} &\ge s^2 e^{2\ell t} \left(\left(\frac{e^{kt}-1}{e^t-1}+\frac{1-e^{-\ell t}}{1-e^{-t}}\right)^2 \frac{1}{e^{\varepsilon^2(k+\ell)^2}-1}-\left(\frac{1-e^{-\ell t}}{1-e^{-t}}\right)^2\right)\\
    &= \frac{s^2}{(1-e^{-t})^2} \left(\frac{(e^{(k+\ell-1)t}-e^{(\ell-1) t}+e^{\ell t}-1)^2}{e^{\varepsilon^2 (k+ \ell)^2}-1} - (e^{\ell t}-1)^2\right).
\end{align*}
\end{proof}

\section{Trimmed Mean}

For the problem of mean estimation, we use the trimmed mean as our estimator.

\newcommand{\trim}[1]{\mathsf{trim}_{#1}}
\begin{defn}[Trimmed Mean]
For $n,m \in \mathbb{Z}$ with $n>2m\ge 0$, define $\trim{m} : \mathbb{R}^n \to \mathbb{R}$ by $$\trim{m}(x) = \frac{x_{(m+1)}+x_{(m+2)}+ \cdots + x_{(n-m)}}{n-2m},$$ where $x_{(1)} \le x_{(2)} \le \cdots \le x_{(n)}$ denote the order statistics of $x$.
\end{defn}

Intuitively, the trimmed mean interpolates between the mean ($m=0$) and the median ($m=\frac{n-1}{2}$).

\subsection{Error of the Trimmed Mean}
\begin{figure}
    \centering
    \includegraphics[width=0.7\textwidth]{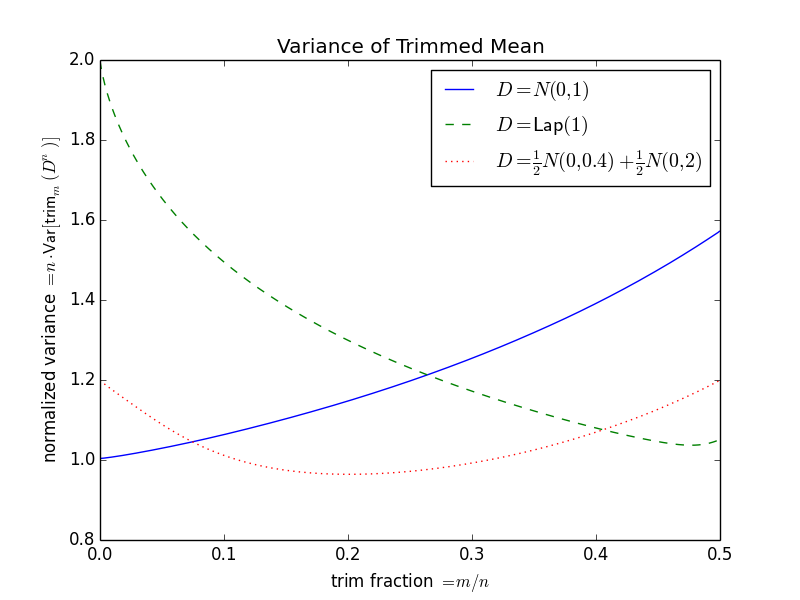}
    \caption{Variance of the Trimmed Mean for Various distributions as the trimming fraction is varied. The plot depicts experimental values for $n=1001$ averaged over $10^6$ repetitions.}
    \label{fig:trimall}
\end{figure}

Before we consider privatising the trimmed mean, we look at the error introduced by the trimming itself. We focus on mean squared error relative to the mean. That is, $$\ex{X \leftarrow \mathcal{D}^n}{\left(\trim{m}(X)-\mu\right)^2},$$ where $\mu=\ex{X\leftarrow \mathcal{D}}{X}$ is the mean of the distribution $\mathcal{D}$.

We remark that mean squared error may not be the most relevant error metric for many applications. For example, the length of a confidence interval may be more relevant \cite{KarwaV18}. Similarly, the mean may not be the most relevant parameter to estimate. We pick this error metric as it is simple, widely-applicable, and does not require picking additional parameters (such as the confidence level). 

The error of the trimmed mean depends on both the trimming fraction and also the data distribution. Figure \ref{fig:trimall} illustrates this. For Gaussian data, the optimal estimate is the empirical mean, corresponding to trimming $m=0$ elements. This has mean squared error $\frac{1}{n}$ for $n$ samples. As the trimming fraction is increased, the error does too. At the extreme, the median of Gaussian data has asymptotic variance $\frac{\pi}{2n} \approx \frac{1.57}{n}$. However, if the data has slightly heavier tails than Gaussian data, such as Laplacian data, then trimming actually reduces variance. The Laplacian Mean has variance $\frac{2}{n}$, while the median has asymptotic variance $\frac{1}{n}$. In between these two cases is a mixture of two Gaussians with the same mean and differning variances. Here a small amount of trimming reduces the error, but a large amount of trimming increases it again, and there is an optimal trimming fraction in between.

For our main theorems we use the following two analytic bounds. The first is a strong bound for symmetric distributions, while the second is a weaker bound for asymmetric distributions.

\begin{prop}\label{prop:sym-trim-var}
Let $\mathcal{D}$ be a symmetric distribution on $\mathbb{R}$. Let $n,m \in \mathbb{Z}$ satisfy $n>2m\ge 0$. Then $\trim{m}(X)$ is also symmetric for $X \leftarrow \mathcal{D}^n$. Moreover, $$\ex{X \leftarrow \mathcal{D}^n}{\left(\trim{m}(X)\right)^2} \le \frac{n}{(n-2m)^2} \cdot \ex{X\leftarrow\mathcal{D}}{X^2}.$$
\end{prop}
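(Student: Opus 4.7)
The proposition has two parts, which I would tackle separately.

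For \textbf{symmetry}, translate so that $\mathcal{D}$ is symmetric about $0$, in which case $X$ has the same distribution as $-X$. Using $(-X)_{(i)} = -X_{(n+1-i)}$ and reindexing $j = n+1-i$ in the definition of the trimmed mean gives $\trim{m}(-X) = -\trim{m}(X)$. Combining, $\trim{m}(X)$ has the same distribution as $-\trim{m}(X)$, which is the claimed symmetry.

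For the \textbf{second-moment bound}, the symmetry just proved gives $\ex{}{\trim{m}(X)} = 0$, so $\ex{}{\trim{m}(X)^2} = \var{}{\trim{m}(X)} = (n-2m)^{-2}\var{}{S}$, with $S := \sum_{i=m+1}^{n-m} X_{(i)}$. Expanding the variance and setting $M = \{m+1,\ldots,n-m\}$,
\[ \var{}{S} = \sum_{i,j \in M} \mathsf{Cov}(X_{(i)}, X_{(j)}). \]
The plan is to show each covariance is nonnegative, for then the partial sum over $M \times M$ is bounded above by the full sum
\[ \sum_{i,j \in [n]} \mathsf{Cov}(X_{(i)}, X_{(j)}) = \var{}{\sum_{i=1}^n X_{(i)}} = \var{}{\sum_{i=1}^n X_i} = n \cdot \ex{}{X^2}, \]
where the middle equality holds because sorting preserves the sum and the last uses independence of the $X_i$ together with $\ex{}{X}=0$. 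Dividing by $(n-2m)^2$ then completes the argument.

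The main obstacle is showing $\mathsf{Cov}(X_{(i)}, X_{(j)}) \ge 0$. Each $X_{(i)}$ is a coordinatewise nondecreasing function of $(X_1,\ldots,X_n)$: raising any $X_k$ can only shift the empirical counting function $t \mapsto |\{k : X_k \le t\}|$ downward, which weakly raises every order statistic. Since independent random variables are trivially associated, and associatedness is preserved under coordinatewise nondecreasing maps (the Esary-Proschan-Walkup / FKG-type inequality), the order statistics are associated and hence have nonnegative pairwise covariances. All remaining steps are routine bookkeeping; in particular, the symmetry hypothesis is used only to remove $(\ex{}{\trim{m}(X)})^2$ from the second moment and to make $\ex{}{X^2}$ equal the variance of $X$.
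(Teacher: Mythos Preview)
Your argument is correct. The symmetry part is immediate from $(-X)_{(i)}=-X_{(n+1-i)}$ exactly as you say, and the second-moment bound follows cleanly once you know $\mathsf{Cov}(X_{(i)},X_{(j)})\ge 0$: the variance of the trimmed sum is then dominated by the variance of the full sum $\sum_i X_i$, which equals $n\,\ex{}{X^2}$. Your justification of the nonnegative covariances via association (independent variables are associated; each order statistic is coordinatewise nondecreasing; associated variables have nonnegative pairwise covariances) is the standard route and is sound.

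By way of comparison, the paper does not actually give a proof here: it only remarks that the result ``follows from the symmetry of both the trimming and the distribution.'' Your write-up supplies the missing substance. The one place to tighten phrasing is the translation step: the inequality $\ex{}{\trim{m}(X)^2}\le \frac{n}{(n-2m)^2}\ex{}{X^2}$ is \emph{not} translation invariant (for $m=0$ and center $\mu\neq 0$ the left side is $\mu^2+\sigma^2/n$ while the right is $(\mu^2+\sigma^2)/n$), so the statement should be read with $\mathcal{D}$ symmetric about $0$. That is indeed how the paper uses it (it centers before applying the proposition), and your argument already works in that setting; just avoid presenting the translation as a without-loss-of-generality reduction for the second-moment bound.
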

This lemma follows from the symmetry of both the trimming and the distribution.


\begin{prop}\label{prop:asym-trim-var}
Let $n,m\in\mathbb{Z}$ satisfy $n>2m\ge0$.
Let $X_1, \cdots, X_n$ be i.i.d.~samples from a distribution with mean $\mu$ and variance $\sigma^2$. Then $$\ex{}{\left(\trim{m}(X)-\mu\right)^2}\le \frac{n(1+\sqrt{8}m)}{(n-2m)^2} \sigma^2 = O\left(\frac{m}{n}\sigma^2\right).$$
\end{prop}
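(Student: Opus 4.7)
By translation invariance of $\trim{m}$, assume $\mu = 0$ and set $T := \trim{m}(X)$ and $\bar{X} := \frac{1}{n}\sum_i X_i$. The plan is to decompose $T = \bar{X} + (T - \bar{X})$, bound each piece separately, and recombine via the $L^2$ triangle inequality. The decomposition is useful because $T - \bar{X}$ depends on only the $2m$ extreme order statistics, so its contribution should shrink with $m$, while $\bar X$ contributes the usual $\sigma^2/n$ term.

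Let $L = \sum_{i=1}^m X_{(i)}$, $R = \sum_{i=n-m+1}^n X_{(i)}$, and let $E$ denote the set of $2m$ extreme indices. From $n\bar{X} = L + R + (n-2m) T$ one obtains the identity
\begin{equation*}
(n-2m)(T - \bar{X}) \;=\; 2m\bar{X} - (L+R) \;=\; -\sum_{i\in E}(X_{(i)} - \bar{X}).
\end{equation*}
Applying Cauchy--Schwarz to this $2m$-term sum yields
\begin{equation*}
(n-2m)^2 (T - \bar{X})^2 \;\le\; 2m \sum_{i\in E}(X_{(i)} - \bar{X})^2 \;\le\; 2m \sum_{i=1}^n (X_i - \bar{X})^2,
\end{equation*}
and the standard identity $\ex{}{\sum_i (X_i - \bar{X})^2} = (n-1)\sigma^2$ then gives $\ex{}{(T - \bar{X})^2} \le \frac{2mn\sigma^2}{(n-2m)^2}$.

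The final step is to combine this with $\ex{}{\bar{X}^2} = \sigma^2/n$ via the $L^2$ triangle inequality $\|T\|_2 \le \|\bar{X}\|_2 + \|T - \bar{X}\|_2$, squaring and rearranging to match the stated right-hand side. The main obstacle is the exact constant $\sqrt{8}$: the straightforward $(a+b)^2$ expansion yields a leading coefficient of $(1+\sqrt{2m})^2 = 1 + 2\sqrt{2m} + 2m$, which has the same big-$O$ behavior as $1+\sqrt{8}m$ but disagrees in lower-order constants. Tightening this last step likely requires exploiting that the cross-term $\ex{}{\bar{X}(T-\bar{X})}$ has a helpful sign (coming from positive correlation between $\bar{X}$ and the extreme order statistics), or directly analyzing $\ex{}{T^2}$ without splitting through $\bar{X}$.
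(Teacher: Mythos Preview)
Your decomposition through $\bar X$ is clean and genuinely different from what the paper does. It correctly delivers the asymptotic $O(m\sigma^2/n)$, and for large $m$ your leading constant $2m$ actually beats the paper's $2\sqrt{2}m$. But you are right that the exact inequality as stated does not follow: for $m=1$ and $n$ large, your bound after the $L^2$ triangle inequality gives roughly $(1+\sqrt{2})^2\sigma^2/n\approx 5.83\,\sigma^2/n$, whereas the target is $(1+2\sqrt{2})\sigma^2/n\approx 3.83\,\sigma^2/n$. So there is a real gap at small $m$.

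Your proposed repair via the sign of the cross term $\ex{}{\bar X(T-\bar X)}$ will not close it in general. For Gaussian data, $\bar X$ is independent of $(X_1-\bar X,\dots,X_n-\bar X)$, hence independent of $T-\bar X$, and the cross term is exactly zero---not negative. (Incidentally, with a zero cross term your bound $\frac{\sigma^2}{n}+\frac{2m(n-1)\sigma^2}{(n-2m)^2}$ would in fact dominate the paper's bound for all $m\ge 1$, but you cannot claim a nonpositive cross term for arbitrary distributions.) For asymmetric distributions the sign of the cross term is not determined, so neither suggested fix recovers the stated constant.

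The paper takes a quite different route. It writes $Y=(n-2m)\trim{m}(X)=\sum_i(1-A_i)X_i$ with $A_i$ the indicator that $X_i$ is trimmed, then expands $\ex{}{Y^2}$ directly. The difficulty is that $A_i$ depends on all of $X$; the paper handles this by introducing leave-one-out indicators $\tilde A_i^j$ (trimming with $X_j$ removed), which makes $(X_i,\tilde A_i^j)$ independent of $X_j$, and then controls the discrepancy via $\pr{}{\tilde A_i^j\ne A_i}\le \tfrac{2m}{n(n-1)}$. A final Cauchy--Schwarz on the residual terms produces the factor $\sqrt{8}m$. If you want the constant exactly as stated, you will need something along these lines rather than routing through $\bar X$.
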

We first remark about the tightness of this bound: Consider the Bernoulli distribution with mean $\mu=m/2n$. With high probability ($1-2^{-\Omega(m)}$), the trimming removes all the $1$s. Thus $\ex{}{\left(\trim{m}(X)-\mu\right)^2} \ge 0.8\mu^2 = \frac{m^2}{5n^2}$. And we have $\frac{m}{n}\sigma^2 = \frac{m^2}{2n^2}(1-\mu) \le \frac{m^2}{2n^2}$. Thus the bound is tight up to constant factors (in the regime where $n-2m = \Omega(n)$).
\begin{proof}
By affine scaling, we may assume that $\mu=0$ and $\sigma^2=1$.
Let $X_1, \cdots, X_n$ be i.i.d.~samples from some distribution with mean zero and variance one. Let $\sigma$ be a permutation on $[n]$ such that $X_{\sigma(1)} \le X_{\sigma(2)} \le \cdots \le X_{\sigma(n)}$ and $\sigma$ is uniform (that is, ties are broken randomly).

We make $\sigma$ explicit in this proof so that we can reason about the uniform distribution over its choice, and so that we can discuss  $\sigma^{-1}(i)$, which is the index of $X_i$ in the sorted order.

For $s \subseteq [n]$, let $\sigma_s$ denote the restriction of $\sigma$ to $s$. That is, $s = \{\sigma_s(1),\sigma_s(2),\cdots,\sigma_s(|s|)\}$ and $\sigma^{-1}_s(i) \le \sigma^{-1}_s(j) \iff \sigma^{-1}(i) \le \sigma^{-1}(j)$ for all $i,j \in s$.

Let $$Y = (n-2m) \trim{m}(X) = \sum_{i=m+1}^{n-m} X_{\sigma(i)} = \sum_{i=1}^n (1-A_i)X_i,$$ where $A_i \in \{0,1\}$ indicates whether $X_i$ is removed in the trimming process and is defined by $$1-A_i=1 \iff m+1 \le \sigma^{-1}(i) \le n-m.$$ Note that $\ex{}{A_i} = \frac{2m}{n}$ for all $i \in [n]$.

Our goal is to bound 
\begin{align*}
    \ex{}{Y^2} &= \sum_{i,j \in [n]} \ex{}{(1-A_i)X_i (1-A_j)X_j}\\
    &= n - \sum_{i,j \in [n]} \ex{}{X_iX_j(A_i+A_j-A_iA_j)}\\
    &= n - \sum_{i \in [n]} \ex{}{X_i^2 A_i}\\&~~~-\sum_{i,j \in [n] : i \ne j} \ex{}{X_i X_j \tilde{A}^j_i} + \ex{}{X_i X_j \tilde{A}_j^i} + \ex{}{X_i X_j (A_i - \tilde{A}_i^j + A_j - \tilde{A}_j^i - A_iA_j)},
\end{align*}
where $\tilde{A}_i^j$ is defined as follows. For $i,j \in [n]$ with $i \ne j$, $$1-\tilde{A}_i^j = 1 \iff m+1 \le \sigma_{[n]\setminus \{j\}}^{-1}(i) \le (n-1)-m.$$
Since $\sigma^{-1}(i)-1 \le \sigma_{[n]\setminus \{j\}}^{-1}(i) \le \sigma^{-1}(i)$, we have $1-\tilde{A}_i^j \le 1-A_i$ for all $i, j \in [n]$ with $i \ne j$.

We also have $\sigma_{[n]\setminus \{j\}}^{-1}(i) = \sigma^{-1}(i) \iff \sigma^{-1}(j) > \sigma^{-1}(i)$ for all $i,j \in [n]$ with $i \ne j$.
Now, for $i, j \in [n]$ with $i \ne j$, we have
\begin{align*}
    \tilde{A}_i^j \ne A_i
    &\iff \sigma_{[n]\setminus \{j\}}^{-1}(i)=m=\sigma^{-1}(i)-1 ~\text{ OR }~ \sigma_{[n]\setminus \{j\}}^{-1}(i)=n-m=\sigma^{-1}(i)\\
    &\iff \left(\sigma^{-1}(i)=m+1 \text{ AND } \sigma^{-1}(j) < \sigma^{-1}(i)\right) \text{ OR } \left(\sigma^{-1}(i)=n-m \text{ AND } \sigma^{-1}(j) > \sigma^{-1}(i)\right)\\
    &\iff \left(\sigma^{-1}(i)=m+1 \text{ AND } \sigma^{-1}(j) < m+1\right) \text{ OR } \left(\sigma^{-1}(i)=n-m \text{ AND } \sigma^{-1}(j) > n-m\right).
\end{align*}
Since $\sigma$ is a uniformly random permutation, we conclude that, for $i,j \in [n]$ with $i \ne j$, $$\pr{}{\tilde{A}_i^j \ne A_i} \le \pr{}{\sigma^{-1}(j) < \sigma^{-1}(i)=m+1} + \pr{}{\sigma^{-1}(j) >\sigma^{-1}(i)=n-m} \le \frac{2m}{n(n-1)}.$$

Now we observe that (by construction) the pair $(X_i,\tilde{A}_i^j)$ is independent from $X_j$ for all $i,j\in[n]$ with $i \ne j$.

We return to our calculation:
\begin{align*}
    \ex{}{Y^2} 
    &\le n - \sum_{i \in [n]} \ex{}{X_i^2 \cdot 0}\\&~~~-\sum_{i,j \in [n] : i \ne j} \ex{}{X_i \tilde{A}^j_i} \ex{}{ X_j} + \ex{}{X_i }\ex{}{X_j \tilde{A}_j^i} + \ex{}{X_i X_j (A_i - \tilde{A}_i^j + A_j - \tilde{A}_j^i - A_iA_j)}\\
    &= n - \sum_{i,j \in [n] : i \ne j} \ex{}{X_i X_j (A_i - \tilde{A}_i^j + A_j - \tilde{A}_j^i - A_iA_j)}\\
    &\le n + \sum_{i,j \in [n] : i \ne j} \sqrt{\ex{}{(X_i X_j)^2} \ex{}{ (A_i - \tilde{A}_i^j + A_j - \tilde{A}_j^i - A_iA_j)^2}}\\
    &\le n + \sum_{i,j \in [n] : i \ne j} \sqrt{2\ex{}{  \tilde{A}_i^j-A_i + \tilde{A}_j^i - A_j + A_iA_j}} ~~ \text{since } t\in[0,2] \implies t^2 \le 2t\\
    &\le n + n (n-1) \sqrt{2\left(  \frac{2m}{n(n-1)} + \frac{2m}{n(n-1)} + \frac{2m}{n} \cdot \frac{2m-1}{n-1}\right)}\\
    &\le n + \sqrt{8} n m.
\end{align*}
Finally, $$\ex{}{(\trim{m}(X))^2} = \frac{1}{(n-2m)^2} \ex{}{Y^2} \le \frac{n(1+\sqrt{8}m)}{(n-2m)^2}$$
\end{proof}

\subsection{Sensitivity of Trimmed Mean}

The other key property we need is that the trimmed mean has low local and smooth sensitivity.

\begin{prop}\label{prop:trim-ss}
Let $a,b,t\in\mathbb{R}$ with $a<b$ and $t\ge0$ and $n,m,k \in \mathbb{Z}$ with $n>2m\ge 0$ and $k \ge 0$ and $x \in \mathbb{R}^n$. Denote $x_1, x_2, \cdots, x_n$ in sorted order as $x_{(1)} \le x_{(2)} \le \cdots \le x_{(n)}$.
The local sensitivity of the trimmed mean at $x$ is $$\LS{\trim{m}}{x} = \frac{\max\{x_{(n-m+1)}-x_{(m+1)},x_{(n-m)}-x_{(m)}\}}{n-2m}$$
and at distance $k$ it is $$\LSk{\trim{m}}{k}{x} = \left\{\begin{array}{cl} \frac{\max_{\ell=0}^{k+1} x_{(n-m+1+k-\ell)}-x_{(m+1-\ell)}}{n-2m} & \text{ if } k<m \\ \infty & \text{ if } k \ge m \end{array}\right..$$
The $t$-smooth sensitivity of the trimmed mean restricted to inputs in $[a,b]$ -- that is, $\trim{m} : [a,b]^n \to [a,b]$ -- is $$\SmS{\trim{m}}{x}{t} = \frac{1}{n-2m} \max_{k = 0}^n e^{-kt} \max_{\ell=0}^{k+1} x_{(n-m+1+k-\ell)}-x_{(m+1-\ell)} ,$$ where we define $x_{(i)}=a$ for $i \le 0$ and $x_{(i)}=b$ for $i>n$.
\end{prop}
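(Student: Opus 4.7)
The proof has three parts that build on each other. For the local sensitivity formula, the lower bound is witnessed by two explicit one-coordinate modifications: replacing $x_{(m+1)}$ by any value at least $x_{(n-m+1)}$ pushes it out of the trimmed range and pulls $x_{(n-m+1)}$ in, changing the trimmed sum $T := (n-2m)\trim{m}$ by exactly $x_{(n-m+1)} - x_{(m+1)}$; symmetrically, replacing $x_{(n-m)}$ by any value at most $x_{(m)}$ yields $x_{(n-m)} - x_{(m)}$. For the matching upper bound, the key observation is that deleting one element $x_{(j_0)}$ of $x$ and inserting a new value $y$ at sorted position $p$ of the new dataset $z$ produces a sorted order with $z_{(j)} \in \{x_{(j-1)}, x_{(j)}, x_{(j+1)}, y\}$ for every $j$; a short case split on whether $j_0$ and $p$ sit below, inside, or above the trimmed index range $\{m+1,\dots,n-m\}$ then shows that $T(z) - T(x)$ is either $0$ or telescopes to a quantity bounded by the claimed maximum.

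For the distance-$k$ formula, I would apply the local sensitivity formula pointwise inside $\LSk{\trim{m}}{k}{x} = \sup_{d(x,x') \le k} \LS{\trim{m}}{x'}$, reducing to separately maximizing $x'_{(n-m+1)} - x'_{(m+1)}$ and $x'_{(n-m)} - x'_{(m)}$ over all $x'$ within Hamming distance $k$ of $x$. Parameterize $x'$ by the number $b \in \{0,\dots,k\}$ of coordinates moved to very low values and $a = k - b$ moved to very high values. When $a, b \le m$, the sorted order of $x'$ consists of $b$ low values, then $n - k$ retained originals in sorted order, then $a$ high values, so $x'_{(m+1)}$ and $x'_{(n-m+1)}$ equal the $(m+1-b)$-th and $(n-m+1-b)$-th retained values. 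The choice of which $k$ positions to remove is optimized by taking the consecutive sorted positions $m+2-b,\dots,m+k+1-b$, which simultaneously minimizes $x'_{(m+1)}$ and maximizes $x'_{(n-m+1)}$, yielding $x_{(n-m+1+k-b)} - x_{(m+1-b)}$. Setting $\ell = b$ gives the terms for $\ell \in \{0,\dots,k\}$; the analogous analysis of the other branch, with the reindexing $\ell = \ell' + 1$, contributes the remaining terms for $\ell \in \{1,\dots,k+1\}$. For $k \ge m$, moving $m+1$ coordinates to $-\infty$ forces $x'_{(m+1)} = -\infty$, so $\LSk{\trim{m}}{k}{x} = \infty$.

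The smooth sensitivity formula then follows by substituting the distance-$k$ formula into $\SmS{\trim{m}}{x}{t} = \max_{k \ge 0} e^{-tk} \LSk{\trim{m}}{k}{x}$, now for $\trim{m} : [a,b]^n \to [a,b]$. Since modifications are confined to $[a,b]$, the previous analysis carries over with the endpoints $a, b$ in place of $\pm\infty$, and the convention $x_{(i)} = a$ for $i \le 0$ and $x_{(i)} = b$ for $i > n$ precisely encodes the saturated cases where more than $m$ coordinates are forced to a single endpoint (for instance, when $\ell > m$ we have $m+1-\ell \le 0$, and the relevant order statistic of $x'$ is forced to $a$). The supremum over $k$ truncates at $k = n$ because beyond that point the inner quantity is at most $b - a$ while $e^{-tk}$ strictly decreases. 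The main obstacle is the upper bound in the distance-$k$ analysis: verifying that removing a contiguous block of $k$ sorted positions is indeed optimal requires a careful combinatorial argument using the strict monotonicity of the retained-order map.
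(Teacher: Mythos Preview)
The paper does not give a proof of this proposition; it states only that ``the proof of this is a direct extension of the analysis of the smooth sensitivity of the median by Nissim, Raskhodnikova, and Smith.'' Your proposal supplies precisely that extension, and the structure you outline---explicit witnesses for the lower bound, a case analysis on the position of the deleted/inserted element for the upper bound, and then an order-statistic optimization over which $k$ coordinates to alter---is the standard route and is correct.

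One small glitch: in the $k \ge m$ case you write ``moving $m+1$ coordinates to $-\infty$ forces $x'_{(m+1)} = -\infty$,'' but when $k = m$ you can only alter $m$ coordinates. The fix is immediate: moving $m$ coordinates to $-\infty$ forces $x'_{(m)} = -\infty$, and then the \emph{second} branch $x'_{(n-m)} - x'_{(m)}$ of the local sensitivity formula is already infinite. Everything else---in particular, your reindexing $\ell = \ell'+1$ that merges the two branches into the single range $\ell \in \{0,\dots,k+1\}$, and the verification that removing the contiguous block $\{m+2-b,\dots,m+1+a\}$ simultaneously attains both extremal order statistics---checks out.
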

The proof of this is a direct extension of the analysis of the smooth sensitivity of the median by Nissim, Raskhodnikova, and Smith \cite{NissimRS07}. There is a $O(n\log n)$-time algorithm for computing the smooth sensitivity.

\section{Average-Case Mean Estimation via Smooth Sensitivity of Trimmed Mean}

Having compiled the relevant tools in the previous sections, we turn to applying them to the problem of mean estimation. We consider an average-case distributional setting.
We have an unknown distribution $\mathcal{D}$ on $\mathbb{R}$ and our goal is to estimate the mean $\mu = \ex{X \leftarrow \mathcal{D}}{X}$, given $n$ independent samples $X_1, \cdots, X_n$ from $\mathcal{D}$.

Our non-private comparison point is the (un-trimmed) empirical mean $\overline{X}=\frac{1}{n} \sum_{i=1}^n X_i$. This estimator is unbiased --- that is, $\ex{X \leftarrow \mathcal{D}^n}{\overline{X}}=\mu$ and it has variance $\ex{X \leftarrow \mathcal{D}^n}{(\overline{X}-\mu)^2} = \frac{\sigma^2}{n}$, where $\sigma^2 = \ex{X \leftarrow \mathcal{D}}{(X-\mu)^2}$.

We make the assumption that some loose bound $\mu \in [a,b]$ is known. Our results will only pay logarithmically in $b-a$, so this bound need not be tight. In general, some dependence on this range is required.

In our situation the inputs may be unbounded. This means the trimmed mean has infinite global sensitivity and thus infinite smooth sensitivity. Thus we apply truncation to control the sensitivity

\begin{defn}[Truncation]
For $a,b,x \in \mathbb{R}$ with $a<b$, define $$[x]_{[a, b]} = \left\{\begin{array}{cl}b & \text{ if } x>b \\ x & \text{ if } a \le x \le b \\ a  & \text{ if } x < a\end{array}\right..$$
For $x \in \mathbb{R}^n$ and $a<b$, define $[x]_{[a,b]} = ([x_1]_{[a,b]},[x_2]_{[a,b]}, \cdots, [x_n]_{[a,b]})$.
\end{defn}

\subsection{Truncation of Inputs}

By truncating inputs before applying the trimmed mean, we obtain the following error bound. This holds for symmetric and subgaussian distributions.

The key is that, if we know that the distribution $\mathcal{D}$ is $\sigma$-subgaussian and its mean lies within $[a,b]$, then we can truncate the inputs to the range $[a-O(\sigma\log n),b+O(\sigma\log n)]$ without significantly affecting them.

\begin{prop}\label{prop:trim-subg}
Let $\mathcal{D}$ be a symmetric $\overline\sigma$-subgaussian distribution on $\mathbb{R}$. 
Let $\mu = \ex{X \leftarrow \mathcal{D}}{X}$ and $\sigma^2=\ex{X \leftarrow \mathcal{D}}{(X-\mu)^2}$. Let $a<\mu<b$.  t $n,m \in \mathbb{Z}$ satisfy $n>2m\ge 0$. Then 
\begin{align*}
    \ex{X \leftarrow \mathcal{D}^n}{\left(\trim{m}\left([X]_{[a,b]}\right)-\mu\right)^2} &\le \left(\sqrt{\frac{n \sigma^2}{(n-2m)^2}} +\sqrt{\frac{2n\overline\sigma^2}{n-2m} \left(e^{-(b-\mu)^2/2\overline\sigma^2}+e^{-(\mu-a)^2/2\overline\sigma^2}\right)}\right)^2\\&= 
    \frac{\sigma^2}{n} \left(1 + \frac{2m}{n-2m} + \sqrt{\frac{\overline\sigma^2}{\sigma^2}\frac{2n^2}{n-2m} (e^{-(b-\mu)^2/2\overline\sigma^2}+e^{-(\mu-a)^2/2\overline\sigma^2})}\right)^2\\
    &= \frac{\sigma^2}{n} \left(1 + O\left(\frac{m}{n}\right)\right),
\end{align*}
where the final asymptotic statement assumes $n-2m=\Omega(n)$ and $\min\{b-\mu,\mu-a\} \ge O( \overline \sigma \log n)$ and $\overline\sigma^2=O(\sigma^2)$.
\end{prop}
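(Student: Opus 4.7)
The plan is to bound $\|\trim{m}([X]_{[a,b]}) - \mu\|_2$ by splitting it into the error of the trimmed mean on the untruncated data plus the error introduced by truncation. Applying the $L^2$ triangle inequality gives
$$\sqrt{\mathbb{E}\left[(\trim{m}([X]_{[a,b]})-\mu)^2\right]} \le \sqrt{\mathbb{E}\left[(\trim{m}(X)-\mu)^2\right]} + \sqrt{\mathbb{E}\left[(\trim{m}([X]_{[a,b]})-\trim{m}(X))^2\right]}.$$
The first summand lies in the setting of Proposition~\ref{prop:sym-trim-var}: because $\mathcal{D}$ is symmetric about $\mu$, the shifted samples $X_i-\mu$ are symmetric about the origin, and the shift-invariance of $\trim{m}$ then yields the bound $\sqrt{n\sigma^2/(n-2m)^2}$, which is exactly the first term in the claim.

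For the truncation error I use that the clipping map $y\mapsto [y]_{[a,b]}$ is monotone, so the order statistics of $[X]_{[a,b]}$ are the clipped order statistics of $X$. Setting $D_i := [X_i]_{[a,b]} - X_i$ and letting $\sigma$ denote the sorting permutation, this gives
$$\trim{m}([X]_{[a,b]}) - \trim{m}(X) = \frac{1}{n-2m}\sum_{i=m+1}^{n-m} D_{\sigma(i)}.$$
The triangle inequality on $|\cdot|$ followed by Jensen's inequality for $x\mapsto x^2$ (i.e.\ the power-mean inequality $(\mathrm{average})^2\le\mathrm{average~of~squares}$) yields
$$\left(\trim{m}([X]_{[a,b]}) - \trim{m}(X)\right)^2 \le \frac{1}{n-2m}\sum_{i=m+1}^{n-m}D_{\sigma(i)}^2 \le \frac{1}{n-2m}\sum_{i=1}^n D_i^2,$$
where the last step simply enlarges the index set. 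Taking expectations and using that the $X_i$ are i.i.d.\ gives $\mathbb{E}[(\trim{m}([X]_{[a,b]})-\trim{m}(X))^2] \le \tfrac{n}{n-2m}\,\mathbb{E}[D_1^2]$. I view this Jensen step as the key tightening: it saves a factor of $n-2m$ relative to a naive triangle inequality across all $n$ discrepancies, and is what produces the denominator $n-2m$ (rather than $(n-2m)^2$) in the claim.

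It remains to estimate $\mathbb{E}[D_1^2] = \mathbb{E}[(X-b)^2\mathbb{I}[X>b]] + \mathbb{E}[(a-X)^2\mathbb{I}[X<a]]$. I rewrite each piece using the tail integral $\mathbb{E}[(X-b)^2\mathbb{I}[X>b]] = \int_0^\infty 2u\,\Pr[X>b+u]\,\mathrm{d}u$, apply the $\overline\sigma$-subgaussian tail bound $\Pr[X>b+u]\le e^{-((b-\mu)+u)^2/2\overline\sigma^2}$, substitute $v = u+(b-\mu)$, and use $\int_c^\infty v\,e^{-v^2/2\overline\sigma^2}\,\mathrm{d}v = \overline\sigma^2 e^{-c^2/2\overline\sigma^2}$, discarding the subtracted nonnegative term $2c\int_c^\infty e^{-v^2/2\overline\sigma^2}\,\mathrm{d}v$. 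This produces $\mathbb{E}[D_1^2] \le 2\overline\sigma^2\bigl(e^{-(b-\mu)^2/2\overline\sigma^2}+e^{-(\mu-a)^2/2\overline\sigma^2}\bigr)$, matching the second summand. The equivalent rewrite as $\tfrac{\sigma^2}{n}\bigl(1+\tfrac{2m}{n-2m}+\cdots\bigr)^2$ is routine algebra after factoring $\sigma^2/n$; the asymptotic $\tfrac{\sigma^2}{n}(1+O(m/n))$ then follows since under the stated hypotheses $\overline\sigma^2=O(\sigma^2)$ and $\min\{b-\mu,\mu-a\}=\Omega(\overline\sigma\log n)$, the exponentials make the root term vanish faster than any polynomial in $1/n$. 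I expect the Jensen trick to be the only conceptually subtle part; everything else is bookkeeping.
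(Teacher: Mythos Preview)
Your proof is correct and follows essentially the same approach as the paper: the paper splits via the $L^2$ triangle inequality (stated as $\ex{}{(X+Y)^2}\le(\sqrt{\ex{}{X^2}}+\sqrt{\ex{}{Y^2}})^2$), invokes Proposition~\ref{prop:sym-trim-var} for the first term, and proves the truncation bound as a separate lemma using exactly your Jensen-then-enlarge step followed by the same tail-integral computation. The only cosmetic differences are that the paper writes the tail bound via the density and integration by parts rather than the tail formula, and bounds $e^{-(x+b)^2/2\overline\sigma^2}\le e^{-x^2/2\overline\sigma^2}e^{-b^2/2\overline\sigma^2}$ directly instead of substituting and discarding the cross term---these are equivalent manipulations.
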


We remark that if $\mathcal{D}$ is not subgaussian, but rather subexponential then a similar bound can be proved. This result is simply meant to be indicative of what is possible.

\begin{proof}
We may assume $\mu=0$ without loss of generality.
The result follows by combining Proposition \ref{prop:sym-trim-var} and Lemma \ref{lem:subg-trim-trunc} with the following reformulation of the Cauchy-Schwartz inequality. $$\forall X,Y~~~~~\ex{}{(X+Y)^2} \le \left(\sqrt{\ex{}{X^2}} + \sqrt{\ex{}{Y^2}}\right)^2.$$
\end{proof}

\begin{lem}\label{lem:subg-trim-trunc}
Let $\mathcal{D}$ be a centered $\sigma$-subgaussian distribution on $\mathbb{R}$. Let $n,m \in \mathbb{Z}$ satisfy $n>2m\ge0$. Let $a<0<b$. Then $$\ex{X \leftarrow \mathcal{D}^n}{\left(\trim{m}\left([X]_{[a,b]}\right) -\trim{m}(X)\right)^2} \le \frac{2n}{n-2m} \cdot \sigma^2 \cdot \left( e^{-b^2/2\sigma^2} + e^{-a^2/2\sigma^2}\right)$$
\end{lem}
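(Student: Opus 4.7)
The plan is to reduce the bound on the trimmed mean error to a per-sample quantity, exploiting the fact that truncation is a monotone operation. Write $Y = [X]_{[a,b]}$, so $Y_i = [X_i]_{[a,b]}$. Since $u \mapsto [u]_{[a,b]}$ is nondecreasing, sorting commutes with truncation, meaning $Y_{(i)} = [X_{(i)}]_{[a,b]}$ for every $i$. Consequently,
$$\trim{m}(Y) - \trim{m}(X) = \frac{1}{n-2m}\sum_{i=m+1}^{n-m}\left([X_{(i)}]_{[a,b]} - X_{(i)}\right),$$
and the key observation is that each summand is just $Y_{(i)}-X_{(i)}$ at a specific rank, while $\sum_{i=1}^n (Y_{(i)}-X_{(i)})^2 = \sum_{i=1}^n (Y_i-X_i)^2$ by reindexing.

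Next I would apply the triangle inequality followed by Cauchy--Schwarz on the $n-2m$ terms of the sum:
$$\left(\trim{m}(Y)-\trim{m}(X)\right)^2 \le \frac{1}{(n-2m)^2}\left(\sum_{i=m+1}^{n-m}|Y_{(i)}-X_{(i)}|\right)^2 \le \frac{1}{n-2m}\sum_{i=m+1}^{n-m}(Y_{(i)}-X_{(i)})^2.$$
Dropping the restriction on $i$ (the terms are nonnegative) and using the reindexing above yields the bound $\frac{1}{n-2m}\sum_{i=1}^n (Y_i-X_i)^2$. Taking expectation and using that the $X_i$ are i.i.d.\ gives
$$\ex{X\leftarrow\mathcal{D}^n}{\left(\trim{m}(Y)-\trim{m}(X)\right)^2} \le \frac{n}{n-2m}\,\ex{X\leftarrow\mathcal{D}}{\left([X]_{[a,b]}-X\right)^2}.$$

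The final step is a routine subgaussian tail estimate of $\ex{}{([X]_{[a,b]}-X)^2} = \ex{}{(X-b)^2\mathbb{I}[X>b]} + \ex{}{(a-X)^2\mathbb{I}[X<a]}$. Writing $\ex{}{(X-b)^2\mathbb{I}[X>b]} = \int_0^\infty 2t\,\pr{}{X > b+t}\,\mathrm{d}t$ and invoking the subgaussian tail bound $\pr{}{X > b+t} \le e^{-(b+t)^2/2\sigma^2}$ (from the preliminaries), a change of variables $u = b+t$ yields
$$\ex{}{(X-b)^2\mathbb{I}[X>b]} \le 2\int_b^\infty u\,e^{-u^2/2\sigma^2}\mathrm{d}u - 2b\int_b^\infty e^{-u^2/2\sigma^2}\mathrm{d}u \le 2\sigma^2 e^{-b^2/2\sigma^2},$$
and symmetrically for the left tail. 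Combining with the previous display gives the claimed bound. There is no serious obstacle; the one point requiring care is justifying the reindexing between order statistics and original samples, which hinges entirely on the monotonicity of $[\cdot]_{[a,b]}$.
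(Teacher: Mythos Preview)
Your proof is correct and follows essentially the same route as the paper: both use monotonicity of truncation to identify $[X]_{[a,b]}$ at rank $\ell$ with $[X_{(\ell)}]_{[a,b]}$, apply Cauchy--Schwarz on the $n-2m$ summands, extend to all $n$ terms and reindex to the unsorted sample, and finish with a layer-cake/integration-by-parts computation of $\ex{}{([X]_{[a,b]}-X)^2}$ via the subgaussian tail bound. The only cosmetic difference is in the final integral estimate---the paper bounds $e^{-(t+b)^2/2\sigma^2}\le e^{-t^2/2\sigma^2}e^{-b^2/2\sigma^2}$ and integrates, whereas you drop the nonnegative term $2b\int_b^\infty e^{-u^2/2\sigma^2}\,\mathrm{d}u$---but both yield the identical bound $2\sigma^2 e^{-b^2/2\sigma^2}$.
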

\begin{proof}
We begin with the standard tail bound of subgaussians: For $t,x \ge 0$,
$$\pr{X \leftarrow \mathcal D}{X \ge x} = \ex{X \leftarrow \mathcal D}{\mathbb{I}[X \ge x]} \le \ex{X \leftarrow \mathcal D}{e^{t(X-x)}} \le e^{\sigma^2 t^2/2 - tx} = e^{-x^2/2\sigma^2},$$ where the final inequality follows by setting $t=x/\sigma^2$ to minimize the expression. Similarly $\pr{X \leftarrow \mathcal D}{X \le x} \le e^{-x^2/2\sigma^2}$ for $x\le 0$.
Next we apply this to the quantity at hand:
\begin{align*}
    \ex{X \leftarrow \mathcal{D}^n}{\left(\trim{m}\left([X]_{[a,b]}\right) -\trim{m}(X)\right)^2} &\le \frac{1}{n-2m} \sum_{\ell=m+1}^{n-m} \ex{X \leftarrow \mathcal{D}^n}{\left(\left[X_{(\ell)}\right]_{[a,b]}-X_{(\ell)}\right)^2}\\
    &\le \frac{1}{n-2m} \sum_{\ell=1}^{n} \ex{X \leftarrow \mathcal{D}^n}{\left(\left[X_\ell\right]_{[a,b]}-X_\ell\right)^2}\\
    \text{($f_{\mathcal D}$ is the density of $\mathcal D$)}~~~&= \frac{n}{n-2m} \left(\int_b^\infty (x-b)^2 f_{\mathcal D}(x) \mathrm{d}x + \int_{-\infty}^a (a-x)^2 f_{\mathcal D}(x) \mathrm{d}x\right)\\
    \text{(integration by parts)}~~~&= \frac{n}{n-2m} \left(\begin{array}{c}\int_b^\infty 2(x-b) \pr{X \leftarrow \mathcal D}{X\ge x} \mathrm{d}x ~~~~~\\~~~~~ +\int_{-\infty}^a 2(a-x) \pr{X \leftarrow \mathcal D}{X \le x} \mathrm{d}x \end{array}\right) \\
    &\le \frac{2n}{n-2m} \left(\begin{array}{c}\int_b^\infty (x-b) e^{-x^2/2\sigma^2} \mathrm{d}x ~~~~~\\~~~~~ +\int_{-\infty}^a (a-x) e^{-x^2/2\sigma^2} \mathrm{d}x \end{array}\right) \\
    &= \frac{2n}{n-2m} \int_0^\infty x \left( e^{-(x+b)^2/2\sigma^2} + e^{-(x-a)^2/2\sigma^2}\right) \mathrm{d}x\\
    &\le \frac{2n}{n-2m} \int_0^\infty x \cdot e^{-x^2/2\sigma^2} \cdot \left( e^{-b^2/2\sigma^2} + e^{-a^2/2\sigma^2}\right) \mathrm{d}x\\
    &= \frac{2n}{n-2m} \cdot \sigma^2 \cdot \left( e^{-b^2/2\sigma^2} + e^{-a^2/2\sigma^2}\right).
\end{align*}
\end{proof}

Next we turn to analyzing the smooth sensitivity of the trimmed mean with truncated inputs.

\begin{lem}\label{lem:ss-subg}
Let $\mathcal{D}$ be a $\sigma$-subgaussian distribution on $\mathbb{R}$. Let $a < 0 < b$. Then $$\ex{X \leftarrow \mathcal{D}^n}{\left(\SmS{\trim{m}\left([\cdot]_{[a,b]}\right)}{X}{t}\right)^2} \le \frac{8\sigma^2 \log n + e^{-2mt} (b-a)^2}{(n-2m)^2}.$$
\end{lem}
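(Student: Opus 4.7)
The plan is to (i) reduce the smooth sensitivity of the truncated-then-trimmed function on $\mathbb{R}^n$ to the smooth sensitivity of the plain trimmed mean on $[a,b]^n$ evaluated at the truncated dataset, (ii) apply the explicit formula of Proposition~\ref{prop:trim-ss}, (iii) simplify the inner $\max_\ell$ sliding-window term and split the outer maximum at $k=m$, which is where the truncation begins to saturate both extremal order statistics, and (iv) finish with a standard subgaussian bound on the expected maximum of the $X_i^2$.

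First I would argue that $\SmS{\trim{m}\circ[\cdot]_{[a,b]}}{X}{t} \le \SmS{\trim{m}}{Y}{t}$, where $Y := [X]_{[a,b]}$ and the right-hand side is taken with $\trim{m}$ viewed as a function $[a,b]^n \to [a,b]$. This holds because coordinate-wise truncation is $1$-Lipschitz and does not increase Hamming distance: if $d(x,x') \le k$ then $d([x]_{[a,b]}, [x']_{[a,b]}) \le k$ as well. Hence $\LSk{\trim{m}\circ[\cdot]_{[a,b]}}{k}{X} \le \LSk{\trim{m}}{k}{Y}$ for every $k$, and the inequality between smooth sensitivities follows by taking the $e^{-kt}$-weighted maximum.

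Next I would unpack Proposition~\ref{prop:trim-ss}. Using its conventions $Y_{(i)} = a$ for $i \le 0$ and $Y_{(i)} = b$ for $i > n$, and observing that widening the sliding window only increases the difference of sorted values, $\max_{\ell = 0}^{k+1} \bigl(Y_{(n-m+1+k-\ell)} - Y_{(m+1-\ell)}\bigr) \le Y_{(n-m+k+1)} - Y_{(m-k)}$. I would then split at $k = m$: for $k \ge m$, both extensions trigger and the difference is exactly $b-a$, contributing at most $e^{-mt}(b-a)$ after accounting for the decay; for $k < m$, all indices are genuine order statistics of $Y$, and since truncation is a contraction, $Y_{(n-m+k+1)} - Y_{(m-k)} \le X_{(n-m+k+1)} - X_{(m-k)} \le X_{(n)} - X_{(1)}$. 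Combining the two regimes and absorbing the factors $e^{-kt} \le 1$ in the $k < m$ range yields
\[
(n-2m) \cdot \SmS{\trim{m}}{Y}{t} \le \max\bigl\{X_{(n)} - X_{(1)},\; e^{-mt}(b-a)\bigr\}.
\]

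Finally, squaring using $\max(u,v)^2 \le u^2 + v^2$ for $u, v \ge 0$, bounding $(X_{(n)} - X_{(1)})^2 \le 4 \max_i X_i^2$, and taking expectations reduces the problem to bounding $\ex{}{\max_i X_i^2}$. By the subgaussian tail bound $\pr{}{X_i^2 \ge t} \le 2 e^{-t/(2\sigma^2)}$ and a standard union-bound integral, $\ex{}{\max_i X_i^2} \le 2\sigma^2 \log n$ up to additive $O(\sigma^2)$ that can be absorbed into the leading constant $8$. The main bookkeeping obstacle is the combination of the sliding-window simplification with the case split at $k = m$ (keeping the index conventions straight under the extensions $Y_{(i \le 0)} = a$, $Y_{(i > n)} = b$); the remaining ingredients are the truncation-is-a-contraction argument and routine subgaussian concentration.
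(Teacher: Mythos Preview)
Your proposal is correct and follows essentially the same route as the paper: bound the smooth sensitivity via Proposition~\ref{prop:trim-ss}, split the outer maximum at $k=m$ to get $\max\{X_{(n)}-X_{(1)},\,e^{-mt}(b-a)\}/(n-2m)$, square, and finish with $(X_{(n)}-X_{(1)})^2\le 4\max_i X_i^2$ together with the subgaussian bound on $\ex{}{\max_i X_i^2}$ (the paper invokes Lemma~\ref{lem:subg-ub} here). Your reduction step (i) and the explicit use of ``truncation is a contraction on order statistics'' are more careful than the paper, which applies Proposition~\ref{prop:trim-ss} directly; note that your claim that the additive $O(\sigma^2)$ ``can be absorbed into the leading constant~$8$'' does not literally recover $8\sigma^2\log n$, but the paper's own proof has the same slack (its argument actually yields $8\sigma^2\log(2n)$).
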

\begin{proof}
By Proposition \ref{prop:trim-ss}, $$\SmS{\trim{m}\left([\cdot]_{[a,b]}\right)}{x}{t} =\frac{1}{n-2m} \max_{k = 0}^n e^{-kt} \max_{\ell=0}^{k+1} x_{(n-m+1+k-\ell)}-x_{(m+1-\ell)} \le \frac{\max\{x_{(n)}-x_{(1)} , e^{-mt}\cdot(b-a) \} }{n-2m},$$ where the inequality follows from the fact that $x_{(n-m+1+k-\ell)}-x_{(m+1-\ell)} \le x_{(n)}-x_{(1)}$ when $k<m$ and $x_{(n-m+1+k-\ell)}-x_{(m+1-\ell)} \le b-a$ when $k\ge m$.
Thus
\begin{align*}
    \ex{X \leftarrow \mathcal{D}^n}{\left(\SmS{\trim{m}\left([\cdot]_{[a,b]}\right)}{X}{t}\right)^2} &\le \frac{1}{(n-2m)^2} \ex{X \leftarrow \mathcal{D}^n}{(X_{(n)}-X_{(1)})^2+e^{-2mt}(b-a)^2}\\
    &\le \frac{8\sigma^2 \log (2n) + e^{-2mt} (b-a)^2}{(n-2m)^2},
\end{align*}
where the final inequality follows from Lemma \ref{lem:subg-ub} below and the fact that $(x-y)^2 \le 4\max\{x^2,y^2\}$ for all $x,y\in\mathbb{R}$.
\end{proof}

\begin{lem}[{\cite[Lem.~4.5]{FeldmanS18}}]\label{lem:subg-ub}
Let $\mathcal{D}$ be a $\sigma$-subgaussian distribution. Then $\ex{X\leftarrow\mathcal{D}^n}{\max_{i=1}^n X_i^2} \le 2\sigma^2\log(2n)$.
\end{lem}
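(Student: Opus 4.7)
The natural route is to combine the standard sub-gaussian tail bound (already set up in the Preliminaries) with a union bound and then integrate via the layer-cake identity. First I would reduce to the centered case: the only way $\ex{}{\max_i X_i^2}$ can be controlled in terms of $\sigma$ alone is if the mean is zero, and this is indeed the regime in which the lemma is invoked (e.g.\ in the proof of Lemma \ref{lem:ss-subg}, where it is applied to $X_{(n)}-X_{(1)} \le 2\max_i |X_i - \mu|$). So assume $\ex{X \leftarrow \mathcal{D}}{X}=0$. The Preliminaries already give $\pr{X \leftarrow \mathcal{D}}{X \ge \lambda} \le e^{-\lambda^2/2\sigma^2}$; applying the identical argument to $-X$ and union-bounding gives the two-sided tail $\pr{}{X^2 \ge s} = \pr{}{|X| \ge \sqrt{s}} \le 2e^{-s/2\sigma^2}$ for $s \ge 0$.

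\textbf{Steps.} A union bound over the $n$ i.i.d.\ samples yields
\[ \pr{}{\max_{i \le n} X_i^2 \ge s} \;\le\; \min\!\left\{1,\; 2n\, e^{-s/2\sigma^2}\right\}. \]
Then I would invoke $\ex{}{M} = \int_0^\infty \pr{}{M \ge s}\,ds$ for the nonnegative random variable $M = \max_i X_i^2$ and split the integral at the crossover point $s_0 := 2\sigma^2 \log(2n)$ (the value of $s$ at which $2n e^{-s/2\sigma^2} = 1$). The prefix $[0,s_0]$ contributes at most $s_0 = 2\sigma^2\log(2n)$ using the trivial bound of $1$ on the probability, while the tail contributes
\[ \int_{s_0}^\infty 2n\, e^{-s/2\sigma^2}\,ds \;=\; 4n\sigma^2\, e^{-s_0/2\sigma^2} \;=\; 2\sigma^2. \]
Summing gives $\ex{}{\max_i X_i^2} \le 2\sigma^2\log(2n) + 2\sigma^2 = 2\sigma^2\log(2en)$.

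\textbf{Main obstacle.} The subtle point is matching the precise constant stated. The tail-plus-integration scheme above incurs an additive $2\sigma^2$ slack over the claimed bound, absorbable into the logarithm as $2\sigma^2\log(2en)$. One could instead apply Jensen's inequality as $\exp(\lambda\ex{}{\max_i X_i^2}) \le \sum_i \ex{}{e^{\lambda X_i^2}} \le n/\sqrt{1-2\lambda\sigma^2}$ (valid for $\lambda < 1/(2\sigma^2)$) and optimize over $\lambda$, but in my experience this does not eliminate the additive constant either. Since the only place this lemma is used (Lemma \ref{lem:ss-subg}) needs merely an $O(\sigma^2 \log n)$ bound with a loose leading constant of $8$, this slack is immaterial; I would therefore present the bound as $\ex{}{\max_i X_i^2} \le 2\sigma^2\log(2n) + 2\sigma^2$ and appeal to the constant being absorbed downstream.
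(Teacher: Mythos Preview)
The paper does not supply its own proof of this lemma; it is quoted directly from \cite{FeldmanS18} without argument, so there is nothing to compare against on the paper's side. Your proof is the standard one---sub-gaussian tail bound, union bound over the $n$ coordinates, and layer-cake integration split at the crossover $s_0 = 2\sigma^2\log(2n)$---and every step is correct. You have also correctly diagnosed that this route lands at $2\sigma^2\log(2en)$ rather than the stated $2\sigma^2\log(2n)$; the additive $2\sigma^2$ is genuinely there and is harmless for the only downstream use in Lemma~\ref{lem:ss-subg}, exactly as you note.
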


Combining Proposition \ref{prop:trim-subg} and Lemma \ref{lem:ss-subg} yields the following result.

\begin{thm}\label{thm:sym-subg-main}
Let $s,t,\varepsilon>0$. Let $Z$ be a centered distribution with the property that $$\left.\begin{array}{c}\dr{\alpha}{Z}{e^{\pm t}Z\pm s}\\\dr{\alpha}{e^{\pm t}Z\pm s}{Z}\end{array}\right\} \le \frac12 \alpha \varepsilon^2$$ for all $\alpha \in (1,\infty)$.

Let $n,m\in\mathbb{Z}$ with $n>2m\ge 0$. Let $a<b$ and $c,\sigma,\overline\sigma>0$. 

Define a randomized algorithm $M : \mathbb{R}^n \to \mathbb{R}$ by $$M(x) = \trim{m}\left([x]_{[a,b]}\right) + \frac{1}{s} \SmS{\trim{m}\left([\cdot]_{[a,b]}\right)}{X}{t} \cdot Z.$$
Then $M$ is $\frac12\varepsilon^2$-CDP and has the following property. 
Let $\mathcal{D}$ be a distribution that is symmetric about its mean $\mu \in [a+\overline\sigma c,b-\overline\sigma c]$ and has variance $\sigma^2$ and is $\overline\sigma$-subgaussian. Then
\begin{align*}
    \ex{X \leftarrow \mathcal{D}^n}{\left(M(X)-\mu\right)^2} &\le  \frac{\sigma^2}{n} \left(1 + \frac{2m}{n-2m} + \sqrt{\frac{\overline\sigma^2}{\sigma^2}\frac{2n^2}{n-2m} (e^{-(b-\mu)^2/2\overline\sigma^2}+e^{-(\mu-a)^2/2\overline\sigma^2})}\right)^2 \\&~~~~+ \frac{8\overline\sigma^2 \log(2n) + e^{-2mt} (b-a)^2}{(n-2m)^2 s^2} \cdot \var{}{Z}.
\end{align*}
\end{thm}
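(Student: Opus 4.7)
The plan is to split the argument into a privacy part and a utility part, using the hypothesis on $Z$ for privacy and the two previously established bounds (Propositions \ref{prop:trim-subg} and \ref{prop:ss-subg} — actually Lemma \ref{lem:ss-subg}) for utility.

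For privacy, I would invoke the defining properties of smooth sensitivity. Let $f = \trim{m}\left([\cdot]_{[a,b]}\right)$ and $g(\cdot) = \SmS{f}{\cdot}{t}$. For any neighboring $x, x' \in \mathbb{R}^n$, the smooth sensitivity satisfies $|f(x)-f(x')| \le g(x)$ together with $e^{-t} g(x) \le g(x') \le e^{t} g(x)$. Writing $M(x) = f(x) + \frac{g(x)}{s}\cdot Z$, a change of variables gives
\begin{align*}
\dr{\alpha}{M(x)}{M(x')} = \dr{\alpha}{Z}{\frac{s(f(x')-f(x))}{g(x)} + \frac{g(x')}{g(x)}\cdot Z} = \dr{\alpha}{Z}{e^{t'} Z + s'}
\end{align*}
for some $|t'| \le t$ and $|s'| \le s$. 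Applying the hypothesized bound on $Z$ yields $\dr{\alpha}{M(x)}{M(x')} \le \frac{1}{2}\alpha\varepsilon^2$; the reverse direction is symmetric. This gives $\frac{1}{2}\varepsilon^2$-CDP.

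For utility, I would decompose the mean squared error using that $Z$ is centered and independent of $X$, which kills the cross term:
\begin{align*}
\ex{X \leftarrow \mathcal{D}^n}{(M(X) - \mu)^2} = \ex{X \leftarrow \mathcal{D}^n}{(f(X) - \mu)^2} + \frac{\var{}{Z}}{s^2} \cdot \ex{X \leftarrow \mathcal{D}^n}{g(X)^2}.
\end{align*}
Proposition \ref{prop:trim-subg} controls the first term (error of the truncated trimmed mean applied to a symmetric subgaussian distribution whose mean sits at distance at least $\overline\sigma c$ from each endpoint), producing the first summand in the claimed bound. Lemma \ref{lem:ss-subg} controls $\ex{}{g(X)^2}$ by $(8\overline\sigma^2\log(2n) + e^{-2mt}(b-a)^2)/(n-2m)^2$, which when multiplied by $\var{}{Z}/s^2$ produces the second summand.

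The main (minor) subtlety is the privacy step, where the hypothesis on $Z$ is stated at the extreme shift/dilation pair $(\pm t, \pm s)$ but the effective pair $(t', s')$ in the proof lies anywhere in $[-t,t] \times [-s,s]$. For each of the concrete noise distributions introduced earlier (LLN, ULN, arsinh-normal), the divergence bounds are monotone in $|t'|$ and $|s'|$, so the extension is immediate; alternatively, one can read the hypothesis as uniform over this rectangle, which is how it is used. Combining the privacy conclusion with the two error estimates yields the theorem.
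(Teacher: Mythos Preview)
Your proposal is correct and follows essentially the same approach as the paper, which simply states that the theorem follows by combining Proposition~\ref{prop:trim-subg} and Lemma~\ref{lem:ss-subg}. You supply the details the paper leaves implicit---the privacy argument via the smooth-sensitivity shift/dilation bound (cf.~\S\ref{sec:lln-opt}) and the MSE decomposition using that $Z$ is centered and independent of $X$---and your remark about the hypothesis on $Z$ needing to hold over the full rectangle $[-t,t]\times[-s,s]$ rather than only at the corners is a fair reading of how the assumption is actually used.
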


Combining Theorem \ref{thm:sym-subg-main} with the distributions from Section \ref{sec:dists} yields the following results. The first is a simpler case when the variance is known and the second is for when it is unknown.

\begin{cor}\label{cor:symm-main}
Let $\varepsilon,\sigma>0$, $a<b$, and $n \in \mathbb{Z}$ with $n \ge O(\log((b-a)/\sigma)/\varepsilon)$. 
There exists a $\frac12\varepsilon^2$-CDP algorithm $M:\mathbb{R}^n \to \mathbb{R}$ such that the following holds. 
Let $\mathcal{D}$ be a $\sigma$-subgaussian distribution  that is symmetric about its mean $\mu \in [a, b]$. Then $$\ex{X \leftarrow \mathcal{D}^n}{(M(X)-\mu)^2} \le \frac{\sigma^2}{n} + \frac{\sigma^2}{n^2} \cdot O\left( \frac{\log((b-a)/\sigma)}{\varepsilon} +  \frac{\log n}{\varepsilon^2}\right).$$
\end{cor}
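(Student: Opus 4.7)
The plan is to derive Corollary \ref{cor:symm-main} by instantiating Theorem \ref{thm:sym-subg-main} with the Laplace log-normal noise from Theorem \ref{thm:SS-DP-LLN} (the other two CDP distributions work identically) and tuning the parameters so that both the trimming bias and the smooth-sensitivity noise come out as lower-order terms relative to $\sigma^2/n$.

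First I would enlarge the truncation interval, replacing $[a,b]$ by $[a',b']=[a-C\sigma\log n,\, b+C\sigma\log n]$ for a sufficiently large absolute constant $C$. This serves two purposes: it guarantees that $\min\{b'-\mu,\mu-a'\}=\Omega(\sigma\sqrt{\log n})$ so that the exponential correction $\sqrt{e^{-(b'-\mu)^2/2\sigma^2}+e^{-(\mu-a')^2/2\sigma^2}}$ in Theorem~\ref{thm:sym-subg-main} is at most $1/\mathrm{poly}(n)$, and it only enlarges the effective range by a logarithmic factor so that $\log((b'-a')/\sigma)=O(\log((b-a)/\sigma)+\log\log n)$.

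Next, I would fix the LLN shape parameter $\sigma_{\mathsf{LLN}}$ to be an absolute constant and set $t=\varepsilon\sigma_{\mathsf{LLN}}/2$ and $s=\varepsilon/(2e^{3\sigma_{\mathsf{LLN}}^2/2})$, so that Theorem~\ref{thm:SS-DP-LLN} gives exactly $\tfrac12\varepsilon^2$-CDP and $\var{}{Z}=2e^{2\sigma_{\mathsf{LLN}}^2}=O(1)$; both $t$ and $s$ are then $\Theta(\varepsilon)$. I would then choose the trimming parameter $m=\lceil \log((b'-a')/\sigma)/t\rceil = O\bigl(\log((b-a)/\sigma)/\varepsilon\bigr)$. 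The hypothesis $n\ge O(\log((b-a)/\sigma)/\varepsilon)$ ensures $m\le n/4$, so $n-2m=\Omega(n)$, and the choice of $m$ ensures $e^{-2mt}(b'-a')^2\le\sigma^2$, which kills the range-dependent term in Lemma~\ref{lem:ss-subg}.

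Plugging into Theorem~\ref{thm:sym-subg-main}, the trimming/truncation contribution becomes
\[
\frac{\sigma^2}{n}\Bigl(1+O(m/n)+n^{-\Omega(1)}\Bigr)^2 \;=\; \frac{\sigma^2}{n}+O\!\left(\frac{\sigma^2\log((b-a)/\sigma)}{n^2\varepsilon}\right),
\]
while the smooth-sensitivity noise contributes
\[
\frac{O(\sigma^2\log n)}{(n-2m)^2 s^2}\cdot\var{}{Z} \;=\; O\!\left(\frac{\sigma^2\log n}{n^2\varepsilon^2}\right),
\]
and summing yields the claimed bound. The main obstacle — and the only substantive point beyond bookkeeping — is seeing that a trimming budget of $m=\Theta(\log((b-a)/\sigma)/\varepsilon)$ is simultaneously (i) small enough that the trimming bias enters only at order $\sigma^2/n^2$ with a $1/\varepsilon$ factor rather than $1/\varepsilon^2$, and (ii) large enough, because of the exponential decay factor $e^{-mt}$ in the smooth sensitivity, to absorb the potentially huge range $b-a$ down to the natural scale $\sigma$ so that it never enters the final bound multiplicatively.
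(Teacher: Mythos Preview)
Your proposal is correct and follows exactly the route the paper indicates: the corollary is obtained by instantiating Theorem~\ref{thm:sym-subg-main} with one of the CDP-admissible distributions of Section~\ref{sec:dists} and tuning $m,t,s$ so that the trimming bias costs $O(\sigma^2 m/n^2)$ with $m=\Theta(\log((b-a)/\sigma)/\varepsilon)$ while the smooth-sensitivity noise costs $O(\sigma^2\log n/(n^2\varepsilon^2))$. The only minor bookkeeping point worth noting is that enlarging the truncation window adds an $O(\log\log n/\varepsilon)$ term to $m$, but this is absorbed by the $\log n/\varepsilon^2$ term in the final bound (and by the big-$O$ in the hypothesis on $n$).
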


\begin{cor}
Let $\varepsilon \in (0,1)$, $0<\underline\sigma\le\overline\sigma$, $a<b$, and $n \in \mathbb{Z}$ with $n \ge O(\log((b-a)\overline\sigma/\underline\sigma^2)/\varepsilon)$. 
There exists a $\frac12\varepsilon^2$-CDP algorithm $M:\mathbb{R}^n \to \mathbb{R}$ such that the following holds. 
Let $\mathcal{D}$ be a distrbution that is symmetric about its mean $\mu \in [a, b]$ and is $\sigma$-subgaussian with $\underline\sigma\le\sigma\le\overline\sigma$. Then $$\ex{X \leftarrow \mathcal{D}^n}{(M(X)-\mu)^2} \le \frac{\sigma^2}{n} + \frac{\sigma^2}{n^2} \cdot O\left( \frac{\log((b-a)/\underline\sigma) + \log(\overline\sigma/\underline\sigma)}{\varepsilon} + \frac{\log n}{\varepsilon^2}\right).$$
\end{cor}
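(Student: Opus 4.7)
The plan is to mimic the algorithm of Corollary~\ref{cor:symm-main}, but to drive the input truncation by the known upper bound $\overline\sigma$ instead of the unknown $\sigma$. Concretely, I would take $a' := a - C\overline\sigma\log n$ and $b' := b + C\overline\sigma\log n$ for a sufficiently large constant $C$, sample $Z\leftarrow\LLN{\sigma_0}$ for a fixed constant $\sigma_0$ (so that $\var{}{Z}=O(1)$), and release $M(x) = \trim{m}([x]_{[a',b']}) + \frac{1}{s}\cdot\SmS{\trim{m}([\cdot]_{[a',b']})}{x}{t}\cdot Z$. The $\tfrac12\varepsilon^2$-CDP guarantee is immediate from Theorem~\ref{thm:SS-DP-LLN} for any $t,s>0$ satisfying $\varepsilon \asymp t+s$.

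For the accuracy analysis I would invoke Theorem~\ref{thm:sym-subg-main} with its subgaussian-parameter slot (called $\overline\sigma$ in that theorem, separate from the variance slot) set to the \emph{true} $\sigma$ of $\mathcal{D}$; this is legal since $\mathcal{D}$ is $\sigma$-subgaussian. The precondition $\mu\in[a'+\sigma c,\,b'-\sigma c]$ then holds with $c=C\overline\sigma\log n/\sigma\ge C\log n$, so $e^{-(b'-\mu)^2/2\sigma^2}+e^{-(\mu-a')^2/2\sigma^2}\le 2e^{-c^2/2}=n^{-\omega(1)}$. Using that the variance is at most $\sigma^2$, the theorem's first term collapses to $\frac{\sigma^2}{n}(1+O(m/n))^2=\frac{\sigma^2}{n}+O(\sigma^2 m/n^2)$. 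The key payoff of using the true $\sigma$ in the subgaussian slot is that the theorem's second term becomes $\frac{8\sigma^2\log(2n)+e^{-2mt}(b'-a')^2}{(n-2m)^2 s^2}\cdot\var{}{Z}$, with $\sigma^2$ (not $\overline\sigma^2$) in the numerator.

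Next I would tune $m,t,s$. Using $(b'-a')^2=O((b-a)^2+\overline\sigma^2\log^2 n)$, forcing $e^{-2mt}(b'-a')^2\le\underline\sigma^2\log n\le\sigma^2\log n$ requires $mt=\Omega(\log((b-a)/\underline\sigma)+\log(\overline\sigma/\underline\sigma)+\log\log n)$. Setting $t,s=\Theta(\varepsilon)$ to saturate $\varepsilon\asymp t+s$ and $m=\Theta((\log((b-a)/\underline\sigma)+\log(\overline\sigma/\underline\sigma))/\varepsilon)$---which is $<n/2$ thanks to the hypothesis on $n$---yields a privacy-term contribution of $O(\sigma^2\log n/(n^2\varepsilon^2))$ and a trimming overhead $O(\sigma^2 m/n^2)=\frac{\sigma^2}{n^2}\cdot O((\log((b-a)/\underline\sigma)+\log(\overline\sigma/\underline\sigma))/\varepsilon)$. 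The sub-leading $\log\log n/\varepsilon$ contribution to $m$ absorbs into the $\log n/\varepsilon^2$ term since $\varepsilon\le 1$, delivering the stated bound.

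The main obstacle is that a naive reduction---simply plugging $\overline\sigma$ for $\sigma$ in Corollary~\ref{cor:symm-main}---would deliver $\overline\sigma^2/n$ in the leading term, which is far too weak when $\sigma\ll\overline\sigma$. The resolution, and the whole point of the statement, is the decoupling exposed by Theorem~\ref{thm:sym-subg-main}: the variance controls the statistical term while the subgaussian parameter controls the smooth-sensitivity moment bound, so pushing the tight $\sigma$ into the subgaussian slot preserves the leading $\sigma^2/n$ and demotes the $\overline\sigma$-dependence to a harmless $\log(\overline\sigma/\underline\sigma)$ inside the choice of $m$.
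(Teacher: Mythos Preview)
Your proposal is correct and is exactly the argument the paper intends (the corollary is stated without proof, as an instantiation of Theorem~\ref{thm:sym-subg-main} with one of the distributions from Section~\ref{sec:dists}): the crux is that Theorem~\ref{thm:sym-subg-main} decouples the variance from the subgaussian parameter, so the algorithm's parameters are chosen from $\overline\sigma,\underline\sigma,\varepsilon$ while the analysis plugs the true $\sigma$ into the subgaussian slot. One small cleanup: widening the truncation interval by $\Theta(\overline\sigma\sqrt{\log n})$ rather than $\Theta(\overline\sigma\log n)$ still makes the tail term $n^{-\Omega(1)}$ (since $(b'-\mu)^2/2\sigma^2\ge\Omega(\log n)$) but removes the stray $\log\log n$ contribution to $mt$, so that $m=\Theta\big((\log((b-a)/\underline\sigma)+\log(\overline\sigma/\underline\sigma))/\varepsilon\big)<n/2$ follows directly from the stated hypothesis on $n$.
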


\subsection{Truncation of Outputs}

Rather than truncating the inputs to the trimmed mean, we can truncate the output. This is useful for heavier-tailed distributions and is also simpler to analyze. Indeed, the truncation only reduces error:

\begin{lem}\label{lem:postrimvar}
Let $a\le\mu\le b$ and let $X$ be a random variable. Then $$\ex{}{[X-\mu]_{[a,b]}^2} \le \ex{}{(X-\mu)^2}.$$
\end{lem}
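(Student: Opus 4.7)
The plan is to reduce the claim to a deterministic pointwise inequality and then take expectations. The key observation is that, because $\mu \in [a,b]$, clipping $x$ to the interval $[a,b]$ cannot move it further from $\mu$; equivalently,
$$\bigl|[x]_{[a,b]} - \mu\bigr| \le |x - \mu| \quad \text{for every } x \in \mathbb{R}.$$
Squaring this pointwise inequality and applying monotonicity of expectation to both sides yields the lemma. (Note that reading the displayed quantity in the lemma as the squared error of the truncated $X$ relative to $\mu$ is the interpretation consistent with the context, since the role of this lemma is to argue that truncating the output of the estimator to $[a,b]$ only reduces error.)

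To verify the pointwise inequality I split into three cases according to the position of $x$. First, if $a \le x \le b$, then $[x]_{[a,b]} = x$ and both sides are equal. Second, if $x < a$, then $[x]_{[a,b]} = a$, and from $x < a \le \mu$ we get $|x - \mu| = \mu - x \ge \mu - a = |a - \mu|$. Third, if $x > b$, then $[x]_{[a,b]} = b$, and from $x > b \ge \mu$ we get $|x - \mu| = x - \mu \ge b - \mu = |b - \mu|$. These three cases are exhaustive, and in each one squaring gives $\bigl([x]_{[a,b]} - \mu\bigr)^2 \le (x - \mu)^2$.

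Conceptually this is just the statement that nearest-point projection onto a closed convex set is $1$-Lipschitz, specialized to the interval $[a,b]$ and the two points $x$ and $\mu$; the latter is a fixed point of the projection since it already lies in $[a,b]$, so the contraction inequality reads $|\pi(x)-\mu| \le |x-\mu|$. There is no real obstacle here — the argument is elementary case analysis combined with monotonicity of expectation, and the only subtlety is checking the sign conventions in the two non-trivial cases.
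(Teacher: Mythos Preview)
Your proof is correct. The paper states this lemma without proof, treating it as immediate; your pointwise case analysis (equivalently, the $1$-Lipschitz property of projection onto $[a,b]$ applied at the fixed point $\mu$) is exactly the natural argument, and your reading of the displayed expression as $\bigl([X]_{[a,b]}-\mu\bigr)^2$ is the one consistent with how the lemma is used in Theorem~\ref{thm:sym-subg-main2}.
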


Truncation of the output also controls the smooth sensitivity:
\begin{lem}\label{lem:postrim-ss}
Let $n,m\in\mathbb{Z}$ with $n>2m\ge0$. Let $t>0$ and $a<b$. For $x\in\mathbb{R}^n$, we have
$$\SmS{[\trim{m}(\cdot)]_{[a,b]}}{x}{t} \le \max\left\{\frac{x_{(n)}-x_{(1)}}{n-2m}, e^{-mt}(b-a)\right\}.$$
Let $\mathcal{D}$ be a distribution with variance $\sigma^2$. Then $$\ex{X\leftarrow\mathcal{D}^n}{\left(\SmS{[\trim{m}(\cdot)]_{[a,b]}}{X}{t}\right)^2} \le \sigma^2\frac{2n}{(n-2m)^2} + e^{-2mt}(b-a)^2.$$
\end{lem}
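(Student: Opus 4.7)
The plan is to prove the two inequalities in sequence, with the pointwise bound on the smooth sensitivity feeding directly into the expectation bound.

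First I would bound the local sensitivity at distance $k$ of the function $f := [\trim{m}(\cdot)]_{[a,b]}$ in two different ways. Since projection onto $[a,b]$ is $1$-Lipschitz, $\LSk{f}{k}{x} \le \LSk{\trim{m}}{k}{x}$, and Proposition~\ref{prop:trim-ss} gives a closed form for the latter when $k<m$. The key observation is that every order statistic appearing in that formula is $x_{(i)}$ for some $i \in \{1,\dots,n\}$, so each difference is at most $x_{(n)}-x_{(1)}$ and hence the normalized quantity is at most $(x_{(n)}-x_{(1)})/(n-2m)$. Separately, because $f$ maps into $[a,b]$, its local sensitivity at any distance is trivially at most $b-a$; this is the only bound available once $k \ge m$ (where Proposition~\ref{prop:trim-ss} only gives $\infty$ for $\trim{m}$ itself). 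Plugging these two estimates into $\SmS{f}{x}{t} = \max_{k \ge 0} e^{-tk} \LSk{f}{k}{x}$, the factor $e^{-tk}$ is maximized at $k=0$ for the first estimate and at $k=m$ for the second, which yields the claimed pointwise bound.

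For the expectation bound, I would square the pointwise inequality and use $\max\{A,B\}^2 = \max\{A^2, B^2\} \le A^2 + B^2$ for non-negative $A,B$ to split off the two terms, giving
\[
\ex{X \leftarrow \mathcal{D}^n}{\left(\SmS{f}{X}{t}\right)^2} \le \frac{\ex{}{(X_{(n)}-X_{(1)})^2}}{(n-2m)^2} + e^{-2mt}(b-a)^2.
\]
It remains to show $\ex{}{(X_{(n)}-X_{(1)})^2} \le 2n\sigma^2$. Letting $\mu$ be the mean of $\mathcal{D}$, I would apply the elementary inequality $(a-b)^2 \le 2a^2 + 2b^2$ with $a = X_{(n)} - \mu$ and $b = X_{(1)} - \mu$, then observe that $(X_{(n)} - \mu)^2$ and $(X_{(1)} - \mu)^2$ are two of the $n$ non-negative summands of $\sum_i (X_i - \mu)^2$ (they come from distinct indices as soon as $n \ge 2$), so their combined expectation is at most $n\sigma^2$.

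No step here looks like a serious obstacle: the pointwise bound is essentially a direct corollary of Proposition~\ref{prop:trim-ss} together with the fact that truncation is $1$-Lipschitz and bounded, while the expected-range bound $\ex{}{(X_{(n)}-X_{(1)})^2} \le 2n\sigma^2$ is a one-line manipulation. The only thing to verify carefully is that the indices $n-m+1+k-\ell$ and $m+1-\ell$ appearing in the closed form of Proposition~\ref{prop:trim-ss} do lie in $\{1,\dots,n\}$ throughout the range $0 \le \ell \le k+1$ and $0 \le k < m$, so that bounding each order-statistic difference by $x_{(n)}-x_{(1)}$ is legitimate.
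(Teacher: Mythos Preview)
Your proposal is correct and follows essentially the same route as the paper: bound $\LSk{f}{k}{x}$ by $\min\{\LSk{\trim{m}}{k}{x},\,b-a\}$, use Proposition~\ref{prop:trim-ss} together with $x_{(n-m+1+k-\ell)}-x_{(m+1-\ell)}\le x_{(n)}-x_{(1)}$ for $k<m$, then take $\max\{A,B\}^2\le A^2+B^2$ and finish with $(X_{(n)}-X_{(1)})^2\le 2\sum_i (X_i-\mu)^2$. The only difference is cosmetic: the paper writes the last inequality as $(x_{(n)}-x_{(1)})^2\le 2\sum_i x_i^2$ (implicitly centering), whereas you make the subtraction of $\mu$ explicit, which is slightly cleaner.
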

This should be contrasted with Lemma \ref{lem:ss-subg}. The proof is also nearly identical.
\begin{proof}
Proposition \ref{prop:trim-ss} and the fact that $\LSk{[\trim{m}(\cdot)]_{[a,b]}}{k}{x} \le \min\{\LSk{\trim{m}}{k}{x}, b-a\}$ give
$$\LSk{[\trim{m}(\cdot)]_{[a,b]}}{k}{x} \le \left\{\begin{array}{cl} \frac{\max_{\ell=0}^{k+1} x_{(n-m+1+k-\ell)}-x_{(m+1-\ell)}}{n-2m} & \text{ if } k<m \\ b-a & \text{ if } k \ge m \end{array}\right..$$
We thus obtain the first part of the result from Definition \ref{defn:ss} and the fact that $x_{(n-m+1+k-\ell)}-x_{(m+1-\ell)} \le x_{(n)}-x_{(1)}$ for $0 \le \ell \le k+1 \le m$, namely $$\SmS{[\trim{m}(\cdot)]_{[a,b]}}{x}{t} = \max_{k\ge 0} e^{-tk} \LSk{[\trim{m}(\cdot)]_{[a,b]}}{k}{x} \le \max\left\{\frac{x_{(n)}-x_{(1)}}{n-2m}, e^{-mt} (b-a)\right\}.$$
Next we use the inequality $(x_{(n)}-x_{(1)})^2 \le 2 \sum_{i=1}^n x_i^2$ to obtain $$\ex{X\leftarrow\mathcal{D}^n}{\left(\SmS{[\trim{m}(\cdot)]_{[a,b]}}{X}{t}\right)^2} \le \frac{\ex{}{(X_{(n)}-X_{(1)})^2}}{(n-2m)^2} + e^{-2mt}(b-a)^2 \le \frac{2n\sigma^2}{(n-2m)^2} + e^{-2mt} (b-a)^2.$$
\end{proof}

Combining Proposition \ref{prop:asym-trim-var}, Lemma \ref{lem:postrimvar}, and Lemma \ref{lem:postrim-ss} yields the following result.

\begin{thm}\label{thm:sym-subg-main2}
Let $s,t,\varepsilon>0$. Let $Z$ be a centered distribution with the property that $$\left.\begin{array}{c}\dr{\alpha}{Z}{e^{\pm t}Z\pm s}\\\dr{\alpha}{e^{\pm t}Z\pm s}{Z}\end{array}\right\} \le \frac12 \alpha \varepsilon^2$$ for all $\alpha \in (1,\infty)$.

Let $n,m\in\mathbb{Z}$ with $n>2m\ge 0$. Let $a<b$ and $\sigma>0$. 
Define a randomized algorithm $M : \mathbb{R}^n \to \mathbb{R}$ by $$M(x) = \left[\trim{m}\left(x\right)\right]_{[a,b]} + \frac{1}{s} \SmS{[\trim{m}(\cdot)]_{[a,b]}}{x}{t} \cdot Z.$$

Then $M$ is $\frac12\varepsilon^2$-CDP and has the following property.
Let $\mathcal{D}$ be a distribution with mean $\mu \in [a,b]$ and  variance $\sigma^2$. Then
\begin{align*}
    \ex{X \leftarrow \mathcal{D}^n}{\left(M(X)-\mu\right)^2} &\le \frac{n(1+\sqrt{8}m)}{(n-2m)^2} \sigma^2 + \left(\sigma^2\frac{2n}{(n-2m)^2} + e^{-2mt}(b-a)^2\right) \cdot \frac{1}{s^2}  \cdot \var{}{Z}\\
    &\le \frac{\sigma^2}{n} \cdot O\left(m + \frac{\var{}{Z}}{s^2}\right) + e^{-2mt}(b-a)^2.
\end{align*}
\end{thm}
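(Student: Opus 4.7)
My plan is to split the theorem into its two assertions (privacy and accuracy) and prove each independently, since they use disjoint machinery.

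For the privacy claim, let $f(x) = [\trim{m}(x)]_{[a,b]}$ and $g(x) = \SmS{f}{x}{t}$, so $M(x) = f(x) + (g(x)/s)\cdot Z$. The smooth sensitivity satisfies two standard properties: (i) $|f(x)-f(x')| \le g(x)$ for neighbours $x,x'$ since $g$ dominates the local sensitivity, and (ii) $e^{-t} g(x) \le g(x') \le e^t g(x)$ by Definition \ref{defn:ss}. Under these, R\'enyi divergence is invariant under simultaneous affine transformation, so rescaling by $s/g(x)$ yields
$$\dr{\alpha}{M(x)}{M(x')} = \dr{\alpha}{Z}{s' + e^{t'} Z}$$
where $s' = s\bigl(f(x')-f(x)\bigr)/g(x)$ with $|s'|\le s$ and $t' = \log\bigl(g(x')/g(x)\bigr)$ with $|t'|\le t$. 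The hypothesis on $Z$ then yields $\dr{\alpha}{M(x)}{M(x')} \le \tfrac{1}{2}\alpha\varepsilon^2$, so $M$ is $\tfrac{1}{2}\varepsilon^2$-CDP.

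For the accuracy claim, note that $Z$ is centered and independent of $X$, so writing $A = [\trim{m}(X)]_{[a,b]} - \mu$ and $B = g(X)\,Z/s$, the cross term satisfies $\ex{}{AB} = \ex{}{A\,g(X)/s}\cdot\ex{}{Z}=0$, and the mean squared error decomposes as $\ex{}{A^2} + \ex{}{B^2}$. For $\ex{}{A^2}$, since $\mu \in [a,b]$ the post-truncation gap $[\trim{m}(X)]_{[a,b]} - \mu$ equals $\bigl[\trim{m}(X)-\mu\bigr]_{[a-\mu,\,b-\mu]}$, so Lemma \ref{lem:postrimvar} gives $\ex{}{A^2} \le \ex{}{(\trim{m}(X)-\mu)^2}$, and Proposition \ref{prop:asym-trim-var} bounds this by $n(1+\sqrt{8}m)\sigma^2/(n-2m)^2$. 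For $\ex{}{B^2}$, independence gives $\ex{}{B^2} = \ex{}{g(X)^2}\cdot \var{}{Z}/s^2$, and Lemma \ref{lem:postrim-ss} yields $\ex{}{g(X)^2}\le 2n\sigma^2/(n-2m)^2 + e^{-2mt}(b-a)^2$. Summing the two pieces produces the first displayed inequality; the asymptotic simplification follows by assuming $n-2m = \Omega(n)$ so that $n/(n-2m)^2 = O(1/n)$.

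The main subtlety I anticipate is in the privacy step: the stated hypothesis controls $\dr{\alpha}{Z}{e^{\pm t}Z\pm s}$ only at the \emph{extreme} distortion parameters, but the reduction above produces intermediate pairs $(t',s')$ with $|t'|\le t$, $|s'|\le s$. For the noise distributions of Section \ref{sec:dists}, the explicit R\'enyi bounds are monotone functions of $|t|,|s|$, so this is harmless. If one wanted a fully hypothesis-based argument, one could instead invoke Lemma \ref{lem:cdptriangle} to split the bound into a pure-scale and a pure-shift piece at the endpoints. The rest of the proof is bookkeeping: independence of $Z$ from $X$, the two lemmas on truncation and smooth sensitivity of the trimmed mean, and the variance bound on $\trim{m}$.
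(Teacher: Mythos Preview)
Your proposal is correct and follows essentially the same approach as the paper: the paper's proof is simply the one-line statement ``Combining Proposition \ref{prop:asym-trim-var}, Lemma \ref{lem:postrimvar}, and Lemma \ref{lem:postrim-ss} yields the following result,'' and you have filled in exactly those details, together with the privacy reduction that the paper spells out in Section \ref{sec:lln-opt}. Your observation about the hypothesis being stated only at the extreme distortions $(\pm t,\pm s)$ is a genuine subtlety that the paper glosses over; as you note, it is harmless for the distributions of Section \ref{sec:dists} since their R\'enyi bounds are monotone in $|s|,|t|$.
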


Combining Theorem \ref{thm:sym-subg-main2} with the distributions from Section \ref{sec:dists} yields the following result.

\begin{cor}\label{cor:main-asymm}
Let $\varepsilon>0$ and $0<\underline\sigma$ and $a<b$. Let $n \ge O(\log(n(b-a)/\underline\sigma)/\varepsilon)$. Then there exists a $\frac12\varepsilon^2$-CDP algorithm $M : \mathbb{R}^n \to \mathbb{R}$ such that the following holds. Let $\mathcal{D}$ be a distribution with mean $\mu \in [a,b]$ and variance $\sigma^2\ge\underline\sigma^2$. Then $$\ex{X \leftarrow \mathcal{D}^n}{\left(M(X)-\mu\right)^2} \le \frac{\sigma^2}{n} \cdot O\left( \frac{\log(n(b-a)/\underline\sigma)}{\varepsilon} + \frac{1}{\varepsilon^2}\right).$$
\end{cor}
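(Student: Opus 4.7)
}
The plan is to instantiate Theorem~\ref{thm:sym-subg-main2} with one of the explicit CDP-admissible noise distributions from Section~\ref{sec:dists} (for concreteness I would use $Z\leftarrow\LLN{\sigma_0}$ for a fixed constant $\sigma_0$, say $\sigma_0=1$) and then choose the three remaining knobs $m$, $s$, $t$ so that the three terms of the error bound
\[
\frac{\sigma^2}{n}\cdot O\!\left(m+\frac{\var{}{Z}}{s^2}\right) + e^{-2mt}(b-a)^2
\]
each match the target $\frac{\sigma^2}{n}\cdot O\!\bigl(\log(n(b-a)/\underline\sigma)/\varepsilon+1/\varepsilon^2\bigr)$. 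With $\sigma_0$ a constant, $\var{}{Z}=2e^{2\sigma_0^2}=O(1)$, and Theorem~\ref{thm:SS-DP-LLN} says that the $\frac12\varepsilon^2$-CDP requirement reduces to $t/\sigma_0+e^{3\sigma_0^2/2}s\le\varepsilon$.

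Concretely, I would set $t=c_1\varepsilon$ and $s=c_2\varepsilon$ for sufficiently small constants $c_1,c_2>0$ (chosen so the CDP constraint holds with the fixed $\sigma_0$), and $m=\bigl\lceil C\log(n(b-a)/\underline\sigma)/\varepsilon\bigr\rceil$ for a sufficiently large constant $C$. The hypothesis $n\ge O(\log(n(b-a)/\underline\sigma)/\varepsilon)$ guarantees $n-2m=\Omega(n)$, so all the $\frac{n}{(n-2m)^2}$ prefactors in Theorem~\ref{thm:sym-subg-main2} collapse to $\Theta(1/n)$. With these choices, the $O(m)$ contribution becomes $\frac{\sigma^2}{n}\cdot O(\log(n(b-a)/\underline\sigma)/\varepsilon)$, the $O(\var{}{Z}/s^2)$ contribution becomes $\frac{\sigma^2}{n}\cdot O(1/\varepsilon^2)$, and the residual truncation tail satisfies $mt=\Omega(\log(n(b-a)/\underline\sigma))$, so by taking $C$ large enough and using $\sigma\ge\underline\sigma$ one verifies $e^{-2mt}(b-a)^2\le O(\underline\sigma^2/n^2)\le O(\sigma^2/(n\varepsilon^2)\cdot 1/n)$, which is absorbed into the $\frac{\sigma^2}{n^2}\cdot O(1/\varepsilon^2)$ part of the target, hence a fortiori into the stated $\frac{\sigma^2}{n}\cdot O(1/\varepsilon^2)$ bound. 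Summing the three terms gives the stated bound, and privacy follows directly from Theorem~\ref{thm:sym-subg-main2} (with privacy parameters instantiated via Theorem~\ref{thm:SS-DP-LLN}) because it holds for \emph{every} input distribution.

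The main (mild) obstacle is checking the logarithm bookkeeping for the tail term: one must make sure that the constants $c_1,C$ are chosen jointly so that $mt\ge\tfrac12\log\!\bigl((b-a)^2 n\varepsilon^2/\sigma^2\bigr)$ holds uniformly over admissible $\sigma\ge\underline\sigma$, and one must handle the mild inconvenience that the assumption $n\ge O(\log(n(b-a)/\underline\sigma)/\varepsilon)$ is implicit in $n$ (one can, for instance, use $\log(n(b-a)/\underline\sigma)\le 2\log((b-a)/\underline\sigma)+2\log n$ and absorb a factor of two into the $O(\cdot)$). Everything else is a direct plug-in: one could equally well substitute $\ULN{\sigma_0}$ via Theorem~\ref{thm:SS-DP-ULN} or the arsinh-normal via Theorem~\ref{thm:SS-DP-ASN} — the constants change but the asymptotic bound is the same, which also explains the phrase ``the distributions from Section~\ref{sec:dists}'' in the lead-in sentence.
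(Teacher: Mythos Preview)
Your proposal is correct and matches the paper's approach exactly: the paper states only that the corollary follows by ``combining Theorem~\ref{thm:sym-subg-main2} with the distributions from Section~\ref{sec:dists},'' and your plan of instantiating $Z\leftarrow\LLN{\sigma_0}$ with constant $\sigma_0$, taking $t,s=\Theta(\varepsilon)$ and $m=\Theta(\log(n(b-a)/\underline\sigma)/\varepsilon)$, is precisely the intended instantiation. Your bookkeeping for the three error terms and the use of the hypothesis $n\ge O(\log(n(b-a)/\underline\sigma)/\varepsilon)$ to ensure $n-2m=\Omega(n)$ are the right moves.
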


\section{Experimental Results}\label{sec:experiments}

\begin{figure}
    \centering
    \includegraphics[width=0.7\textwidth]{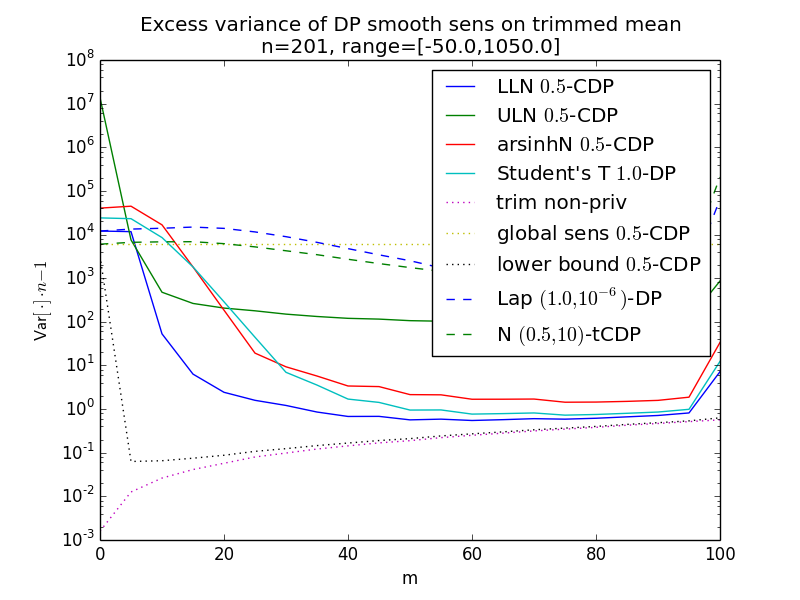}
    \caption{Excesss variance of the private trimmed mean with smooth sensitivity for $N(0,1)$ data. Here $n=201$ and results are averaged over $10^6$ repetitions.}
    \label{fig:m201}
\end{figure}
\begin{figure}
    \centering
    \includegraphics[width=0.7\textwidth]{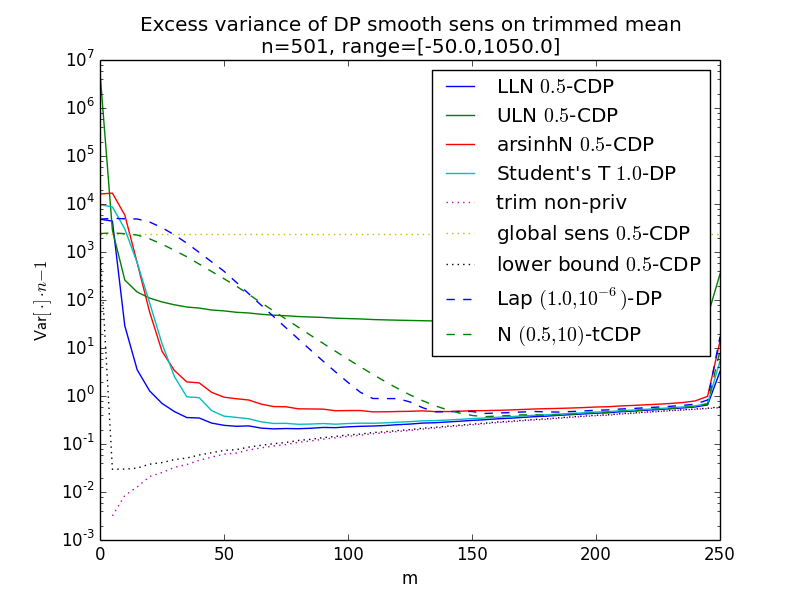}
    \caption{Excesss variance of the private trimmed mean with smooth sensitivity for $N(0,1)$ data. Here $n=501$ and results are averaged over $10^6$ repetitions.}
    \label{fig:m501}
\end{figure}
\begin{figure}
    \centering
    \includegraphics[width=0.7\textwidth]{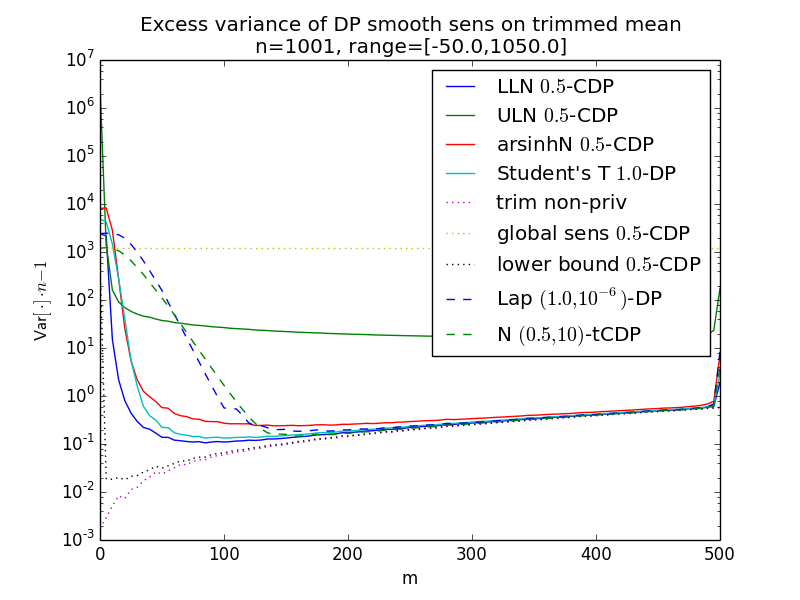}
    \caption{Excesss variance of the private trimmed mean with smooth sensitivity for $N(0,1)$ data. Here $n=1001$ and results are averaged over $10^6$ repetitions.}
    \label{fig:m1001}
\end{figure}
\begin{figure}
    \centering
    \includegraphics[width=0.7\textwidth]{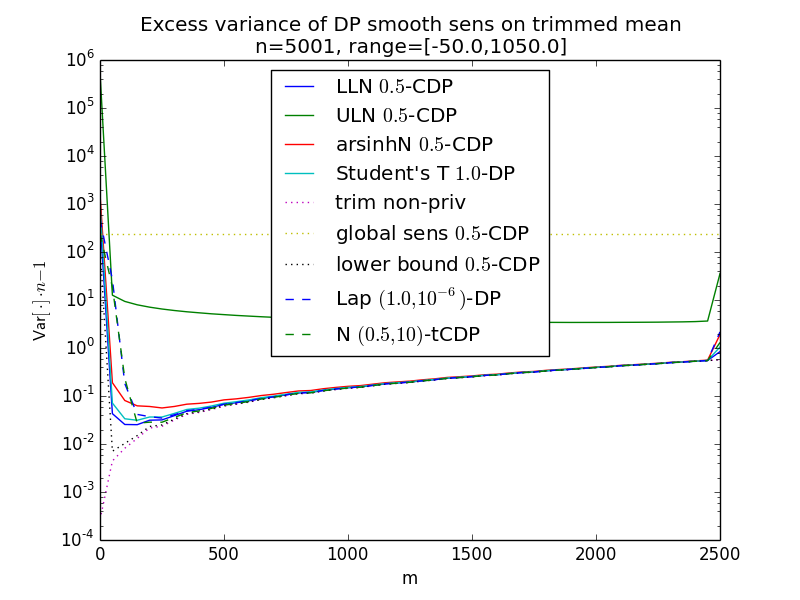}
    \caption{Excesss variance of the private trimmed mean with smooth sensitivity for $N(0,1)$ data. Here $n=5001$ and results are averaged over $10^6$ repetitions.}
    \label{fig:m5001}
\end{figure}
\begin{figure}
    \centering
    \includegraphics[width=0.7\textwidth]{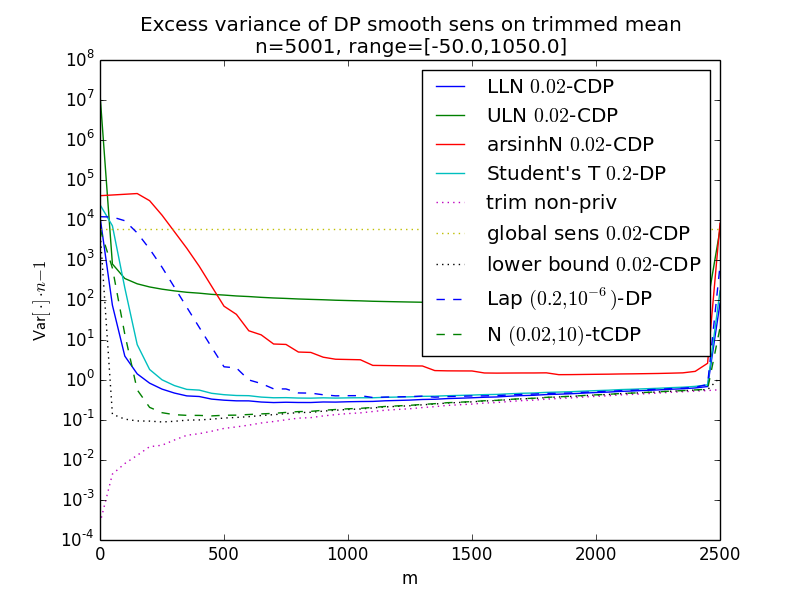}
    \caption{Excesss variance of the private trimmed mean with smooth sensitivity for $N(0,1)$ data. Here $n=5001$ and results are averaged over $10^6$ repetitions. This plot has smaller privacy parameters than the others.}
    \label{fig:m5001smalleps}
\end{figure}

We perform an experimental evaluation of our methods, specifically the combination of the trimmed mean with various smooth sensitivity distributions applied to Gaussian data. The results are shown in Figures \ref{fig:m201}, \ref{fig:m501}, \ref{fig:m1001}, \ref{fig:m5001}, \& \ref{fig:m5001smalleps}. 

\subsection{Experimental Setup}
We explain the experimental setup and parameter choices below.
\begin{itemize}
    \item \textbf{Data:} Our data is sampled from a univariate Gaussian distribution. A Gaussian is a natural choice for a data distribution; however, as shown in Figure \ref{fig:trimall}, the trimmed mean performs better on heavier tailed distributions. That is to say, we would expect our results to be only better for non-Gaussian data.
    
    The variance of our data is set to $\sigma^2=1$ and we set the mean to $\mu=0$. The truncation interval is set conservatively to $[a,b]=[-50,1050]$. The data is truncated before computing the trimmed mean.
    
    \item \textbf{Error:} We measure the variance or mean squared error of the various algorithms. That is, $$\sigma^2=\ex{X \leftarrow N(\mu,1)^n}{\left(\trim{m}\left([X]_{[a,b]}\right) +\SmS{\trim{m}\left([\cdot]_{[a,b]}\right)}{X}{t} \cdot Z -\mu\right)^2},$$ where $Z$ is an appropriately-scaled distribution suited for providing differential privacy when scaled to $t$-smooth sensitivity. For scaling, we multiply by $n$ and subtract $1$. Subtracting $1$ corresponds to the variance of the sample mean, which is the optimal non-private error. Multiplying by $n$ allows for a comparison of different values of $n$, as it normalizes by the correct convergence rate. So we plot the normalized excess variance $\sigma^2 \cdot n -1$ (on a logarithmic scale).
    
    \item \textbf{Algorithms:} We compare our three noise distributions against three other algorithms. Three further comparison points are provided: global sensitivity with truncation, our lower bound, and the non-private error of the trimmed mean. We explain each of the lines below.
    \begin{itemize}
        \item \texttt{LLN}: We evaluate the Laplace Log-Normal distribution from Section \ref{sec:lln}. The plot uses the privacy analysis given in Theorem \ref{thm:SS-DP-LLN}.
        \item \texttt{ULN}: We evaluate the Uniform Log-Normal distribution from Section \ref{sec:uln}. The plot uses the privacy analysis given in Theorem \ref{thm:SS-DP-ULN}.
        \item \texttt{arshinhN}: We evaluate the Arsinh-Normal distribution from Section \ref{sec:arsinhn}. The plot uses the privacy analysis given in Theorem \ref{thm:SS-DP-ASN}.
        \item \texttt{Student's T}: We evaluate the Student's T distribution from Section \ref{sec:studentsT}. The plot uses the privacy analysis given in Theorem \ref{thm:SS-DP-ST}. We set the degrees of freedom parameter to $d=3$.
        \item \texttt{trim non-priv}: We plot the line where zero noise is added for privacy. The only source of error is the trimmed mean itself. This comparison point is useful as it illustrates the fact that, in many cases, most of the error is not coming from the privacy-preserving noise. 
        \item \texttt{global sens}: We plot the error that would be attained by truncating the data and then using this to bound global sensitivity and add Gaussian noise. This is the baseline algorithm which we compare to. Note that the comparison here depends significantly on the truncation interval $[a,b]$.
        \item \texttt{lower bound}: We plot the lower bound on variance given by Proposition \ref{prop:lb-var}. No smooth sensitivity-based algorithm can beat this (although a completely different approach might).
        \item \texttt{Lap}: We compare to Laplace noise. This was suggested in the original work of Nissim, Raskhodnikova, and Smith \cite{NissimRS07}. The plot uses the privacy analysis given in Theorem \ref{thm:ss-lap-dp}.
        \item \texttt{N}: We compare to Gaussian noise. This was analyzed in prior work \cite{NissimRS07, BunDRS18}; see Lemma \ref{lem:bdrs-ss-gauss}.
    \end{itemize}
    We note that the Cauchy distribution is not included in this comparison because it has infinite variance.
    
    \item \textbf{Privacy:} The algorithms we compare satisfy different variants of differential privacy.  As such, it is not possible to give a completely fair comparison. Our new distributions satisfy concentrated differential privacy, whereas the Student's T distribution satisfies pure differential privacy. Laplace and Gaussian noise satisfy approximate differential privacy or truncated concentrated differential privacy.
    
    To provide the fairest possible comparison, we pick a $\varepsilon$ value and then compare $(\varepsilon,0)$-differential privacy with relaxations thereof. Namely, we compare $(\varepsilon,0)$-differential privacy with $\frac12\varepsilon^2$-CDP, $(\frac12\varepsilon^2,10)$-tCDP, and $(\varepsilon,10^{-6})$-differential privacy. Each of these is implied by $(\varepsilon,0)$-differential privacy and the implication is fairly tight in the sense that that these definitions intuitively seem to provide a roughly similar level of privacy.
    
    Our plots use the values $\varepsilon=1$ or $\varepsilon=0.2$.
    
    \item \textbf{Other parameters:} Aside from the privacy parameters ($\varepsilon$ etc.) and the dataset size ($n$), we must choose the trimming level ($m$) and the smoothing parameter ($t$). (Note that, given the privacy parameters and smoothing parameter, the scale parameter ($s$) is chosen to be as large as possible in order to minimize the noise magnitude.) 
    
    Our plots show a range of trimming levels on the horizontal axis. We numerically optimized the smoothing parameter. Specifically, the smooth sensitivity is evaluated for $150$ values of the smoothing parameter (ranging from $t=9$ to $t=10^{-9}$, roughly evenly spaced on a logarithmic scale) and whichever attains the lowest variance for the given algorithm and other parameter values is used.
    
    Finally, several of our distributions have a shape parameter, which we set as follows. For Laplace Log-Normal, we numerically optimize $\sigma$; see Section \ref{sec:lln-opt}. For Uniform Log-Normal, we set $\sigma=\sqrt{2}$, which is the smallest value permitted by our analysis (Theorem \ref{thm:SS-DP-ULN}). For Arsinh-Normal, we set $\sigma=2/\sqrt{3}$, which minimizes one of the terms in the analytical bound (Theorem \ref{thm:SS-DP-ASN}). For Student's T, we set the degrees of freedom to $3$ (the smallest integer with finite variance). 
\end{itemize}

\subsection{Experimental Discussion \& Comparison}

\paragraph{Overall Performance:} The experimental results demonstrate that for relatively moderate parameter settings ($n=201$ and $\varepsilon=1$ depicted in Figure \ref{fig:m201}) it is possible to privately estimate the mean with variance that is only a factor of two higher than non-privately. For $n=1001$, it is possible to drive this excess variance down to $10\%$. Indeed, in these settings, the additional error introduced by trimming is more significant than that introduced by the privacy-preserving noise.

We remark that the data for these experiments is perfectly Gaussian. If the data deviates from this ideal, the robustness of the trimmed mean may actually be beneficial for accuracy (and not just privacy). Figure \ref{fig:trimall} shows that for some natural distributions the trimming does reduce variance.

\paragraph{Comparison of Algorithms:} The results show that different algorithms perform better in different parameter regimes. However, generally, the Laplace Log-Normal distribution has the lowest variance, closely followed by the Student's T distribution. The Arsinh-Normal distribution performs adequately, but the Uniform Log-Normal distribution performs poorly. The Laplace and Gaussian distributions from prior work often perform substantially worse than our distributions, but are better or similar in many parameter settings.

Note that the different algorithms satisfy slightly different privacy guarantees and also have very different tail behaviours. Since the variance of many of the algorithms is broadly similar, the choice of which algorithm is truly best will depend on these factors.

If the stronger pure differential privacy guarantee is preferable, the Student's T distribution is likely best. However, this has no third moment and consequently heavy tails. This makes it bad if, for example, the goal is a confidence interval, rather than a point estimate of the mean. The lightest tails are provided by the Gaussian, but this only satisfies the weaker truncated CDP or approximate differential privacy definitions. Laplace Log-Normal is in between -- it satisfies the strong concentrated differential privacy definition and has quasipolynomial tails and all its moments are finite.

\printbibliography

\appendix

\end{document}